\theoremstyle{thmstyleone}%
\newtheorem{theorem}{Theorem}
\newtheorem{proposition}[theorem]{Proposition}%
\newtheorem{lemma}[theorem]{Lemma}%
\theoremstyle{thmstyletwo}%
\newtheorem{example}{Example}%
\newtheorem{remark}{Remark}%
\theoremstyle{thmstylethree}%
\newtheorem{definition}{Definition}%
\newtheorem{assumptions}{Assumptions}%
\newcommand{\pars}[1]{\left(  #1 \right)}
\newcommand{\abs}[1]{\left\vert   #1 \right\vert }
\newcommand{\setof}[1]{\left\{ #1\right\}}
\newcommand{\sqkets}[1]{\left[ #1 \right]}
\newcommand{\esper}[2]{\mathbb{E}_{#1} \left[  #2 \right]}
\newcommand{\espero}[1]{\mathbb{E} \left[  #1 \right] }
\newcommand{\norm}[1]{ {\left| \left| #1 \right| \right| } } 
\newcommand{\mnorm}[1]{ {\left| \left|\left|   #1 \right| \right| \right| } } 
\newcommand{\rr}{\mathbb{R}}
\newcommand{\cx}{\mathcal{X}}
\newcommand{\bx}{\mathbf{x}}
\newcommand{\ap}[2]{\left\langle #1, #2 \right\rangle}
\newcommand{\Psitx}[2]{\exp \pars{ - \int_0^{#1} \beta \pars{\varphi^{s} \pars{#2} } ds }}
\pgfplotsset{compat=newest}
\begin{document}

\title[ ]{Exponential ergodicity of a degenerate age-size piecewise deterministic process}


\author*[1]{\fnm{Ignacio} \sur{Madrid}}\email{ignacio.madrid-canales@polytechnique.edu}

\affil*[1]{\orgdiv{CMAP}, \orgname{\'Ecole polytechnique, Institut Polytechnique de Paris}, \orgaddress{\street{Route de Saclay}, \city{Palaiseau}, \postcode{91120}, \country{France}}}


\abstract{
{
	We study the long-time behaviour of a non conservative piecewise deterministic measure-valued Markov process modelling the proliferation of an age-and-size structured population, which generalises the ``adder" model of bacterial growth. Firstly, we prove the existence of eigenelements of the associated infinitesimal generator, which are used to bring ourselves back to the study of a conservative Markov process using a Doob $h$-transform. Finally, we obtain the exponential ergodicity of the process via drift-minorisation arguments. Specifically, we show the ``petiteness" of the compact sets of the state space. This permits to circumvent the difficulties encountered when trying to construct mixing trajectories at a fixed uniform time on an unbounded two-dimensional space with only advection and degenerate jump terms.}
}

\keywords{Exponential ergodicity, Harris' Ergodic Theorem, Minorisation condition, Non-conservative semigroups, Age-size structured equation, Degenerate PDMP}

\pacs[MSC Classification]{60J25, 45C05, 45K05, 92D25}

\maketitle

\section{Introduction}

The need to include age as a structuring variable in the description of population dynamics has come to be a useful strategy for modellers searching to account for non-Markovian behaviours in a Markovian setting. In particular, in the context of biological applications, the arising of high-throughput single-cell techniques has allowed microbiologists to follow heterogenous populations of isolated bacteria (where the structure is given by their length, biological markers or any other observable) through time. Thereby, this also grants access to the age structure and has put in evidence the non-trivial dependence of age and other observables at the individual and population scales. The most recent models of bacterial growing include then some sort of age variable, which might not correspond exactly with the \textit{chronological age}, but which might rather evolve in time as a function of the individual traits. { For example, in the \textit{adder model} of bacterial growth discussed in Example~\ref{ex:adder} below, the \textit{age} corresponds to the length added from the birth of the individual, so it grows along with the size variable, but resets at 0 at each reproduction event.} This \textit{age} variable still obeys a renewal equation, which justifies nonetheless its name. { In this regard, an important biological and mathematical question concerns the long-time behaviour of such dynamics: whether a certain steady-state distribution exists and the convergence rate towards it. Biologically, it allows to explain the observation of homeostatic behaviours in experimental timescales. Mathematically, it corresponds to the non trivial task of extending classical stability results to a broader class of stochastic models, by studying the spectral and ergodic properties of a certain family of operators. }  

In this spirit, we study the long-time behaviour of a stochastic process modelling non-conservative population dynamics which are formalised as a measure-valued process $(Z_t)_{t \geq 0}$ with values in the point measures over $\rr_+^2$, $\mathcal{M}_p(\rr_+^2)$,  which represents the age and size of the individuals. For every instant $t > 0$ we can write
\begin{equation}
Z_t = \sum_{i \leq \ap{Z_t}{1}} \delta_{\mathbf{x}_i},
\end{equation}
where $\mathbf{x}_i = (a_i, y_i)$ denotes the vector trait of individual $i$, consisting in its age $a_i$ and size $y_i$. {  We assume that each cell in the population behaves independently. The population then evolves in the continuous time through two fundamental dynamics: growth and division. Whilst growth is assumed to be deterministic, the division mechanism will account for the observed stochasticity. We present below informally the main characteristics of these two ingredients, which are formalised in Section~\ref{sec:pre}:
	
\begin{itemize}
		\item \textbf{Growth and ageing:} Between reproduction events, the variable $\mathbf{x}$ evolves following the deterministic ODE \[ \mathbf{x}'(t) = g(\mathbf{x}(t)) .\]
	The function $g:\rr_+^2 \to \rr_+^2$ represents the growth rate of the size and age coordinates. We denote $\bx \mapsto \varphi^t(\bx)$ the deterministic flow induced by the ODE with initial condition $\bx$ (see Lemma~\ref{lemma:flowprop} for the details), this is, the age and size at time $t$ of an individual of age and size $\bx$ at time 0. For example, if $a$ coincides with the chronological age, and $y$ grows exponentially at rate $\lambda$, we will have $g(a,y) = (1, \lambda y)$ and $\varphi^t(a,y) = (a+t,y e^{\lambda t})$. More interestingly, age and size can evolve in a dependent way. It is the case in Example~\ref{ex:adder} discussed further below: the \textit{adder model} of bacterial growth. There, the age corresponds to the size added since the last division, so that age and size grow at the same rate. Therefore, if $y$ grows exponentially at rate $\lambda$, we have $g(a,y) = (\lambda y, \lambda y)$ and $\varphi^t(a,y) = (a + y e^{\lambda t} - y,  y e^{\lambda t})$. 
		\item \textbf{Reproduction:} Individuals divide independently. An individual of birth state $\bx = (a_0, y_0)$ at time $t_0$ will reproduce at a random time $t_0 + T$, where $T$ is distributed according to
	\[
\mathbb{P}_{\bx}\pars{T \geq t} = \exp\pars{-\int_0^t \beta(\varphi^{s}(\bx)) ds}.
\]
The function $\beta : \rr_+^2 \to \rr_+$ is called the reproduction rate. The value of $\beta(a,y)$ gives the infinitesimal probability by unit of time for an individual of age $a$ and size $y$ to reproduce. 
Then, when an individual of state $\bx$ reproduces, it gives birth to new individuals of age 0 and random size $Z$, that depends on the value of $\bx$. The probability distribution of $Z$ conditional to $\bx$ is characterised by the transition kernel $k : \rr_+^2 \to \rr_+$, which is a positive integrable function. Thus, the number of new individuals of size $z$ produced by an individual of state $\bx$ is proportional to $ k(\bx, z) dz $. In particular, the value of the integral $\int_0^{+\infty} k(\bx, z) dz $ gives the total offspring produced by that individual. The age variable, on the other hand, resets at 0 at each jump. This means that the transition kernel over $\rr_+^2$ is degenerate, of the form $\mathbf{x} \mapsto \delta_{0} (da) \otimes k (\mathbf{x}, z) dz$.  \\
\end{itemize}
}

{ Following the approach introduced by~\cite{fm,tran}, by using a pathwise representation of $Z_t$ with respect to a Poisson point measure,} we can prove that for every $f \in C_b^{1,1}(\rr_+^2)$, $Z_t$ decomposes as a semi-martingale of the form
\begin{equation}
\ap{Z_t}{f} \overset{\textrm{def}}{=} \int_{\rr_+^2} f(\mathbf{x}) Z_t(d \mathbf{x}) = \ap{Z_0}{f} + \int_0^t \ap{Z_s}{\mathcal{Q}f} ds + \mathscr{M}_t^f ,
\label{eq:martingale}
\end{equation}
where $\mathscr{M}_t^f$ is a squared-integrable martingale, and $\mathcal{Q}$ is given for every $f~\in~C_b^{1,1}(\rr_+^2)$ by
\begin{align}
\mathcal{Q}f (\mathbf{x})
= g(\mathbf{x})^\top \nabla f(\mathbf{x})
+ \beta ( \mathbf{x})  \pars{ \int_0^\infty  f(0,z ) k(\mathbf{x}, z) dz   -f(\mathbf{x})}  \quad \forall \mathbf{x} \in \rr_+^2
\label{eq:Q}
\end{align}
 { In the following, we consider the extended version of the generator $\mathcal{Q}$ (see for example Section 20.3.2 of~\cite{mtbook}), associated to a domain $D(\mathcal{Q})$ in which the integral term is well defined. We recall that a function $f$ is said to be in the domain of the extended generator of $\mathcal{Q}$ if there exists a measurable function $u$ such that
\[
\pars{ \ap{Z_t}{f} - \ap{Z_0}{f} - \int_0^t \ap{Z_s}{u} ds}_t
\]
is a local martingale. In that case we will write $\mathcal{Q} f = u$. This is a natural definition since our starting point is the decomposition \eqref{eq:martingale}. 
}

{ \begin{example}[The adder model]
\label{ex:adder}
In the particular case of the bacterial proliferation model that interests us, and that will be studied in Section~\ref{sec:adder}, we consider the dynamics of an age-size-structured population of \textit{E. coli} bacteria as a measure-valued process with values in $\mathcal{M}_p(\cx)$, the point measures over the state space $\cx = \{ (a,y) \in \rr_+^2: 0 < a < y, y > 0 \}$, where $a$ represents the added size and $y$ the current size of each cell. This is, the age of a cell is given by the difference between its current size and its initial size. Therefore, the variable $a$ is not a chronological age, and has actually length units. However, it is indeed a variable that increases with time and is reset to zero after reproduction events. The importance of considering the added size as a structural variable to accurately model the growing dynamics of \textit{E. coli} has been strongly suggested in the recent years by experimental works and statistical analysis in unperturbed conditions~\cite{Taheri-Araghi2015,doumic}, but also in the case where the growth is perturbed by anti-DNA antibiotics (unpublished work by J. Broughton, M. El Karoui, S. M\'el\'eard and the author). 
The dynamics are driven by the generator
\begin{align}
\mathcal{Q}f (a,y)
=& \lambda y \pars{ \partial_a + \partial_y }f(a , y) \nonumber \\
&+ \lambda y B(a)  \pars{ 2 \int_0^1 f(0,\rho y)  F(\rho) d\rho  -f(a,y)}  - d_0 f(a,y). 
\label{eq:Q_adder}
\end{align}
In our previous notation this translates as $g(a,y) = (\lambda y, \lambda y)$, $\beta(a,y) = \lambda y B(a)$, and $k((a,y),z) = 2 \frac{1}{y} F\pars{\frac{z}{y}} \mathds{1}_{z \leq y}$, where $F$ has support in $[0,1]$. The growth dynamics correspond to an exponential elongation at constant rate $\lambda > 0$. The second term in $\mathcal{Q}$ represents the divisions, which occur at rate $\lambda y B(a)$ where $B$ is a hazard function such that for every individual,
\[
\mathbb{P}\pars{\textrm{Added size at division} \geq a} = \exp\pars{-\int_0^a B(s) ds}.
\]
Hence, the jump term reads as follows: a cell of size $y$ and added size $a$ divides at rate $\lambda y B(a)$, and is replaced by two cells of added size 0 and sizes $\rho y$ and $(1-\rho) y$ respectively, where $\rho$ is randomly distributed following the density $F$. The third term represents deaths at a constant rate $d_0>0$.  

In the following we will study the much general model generated by~\eqref{eq:Q}.
\end{example}
}

Our goal is to obtain the long-time behaviour of the first-moment semigroup $M_t f(\mathbf{x}) := \esper{\delta_{\mathbf{x}}}{\ap{Z_t}{f}}$, { which describes the expected behaviour of the population}. In particular, we prove a \textbf{Malthusian behaviour}:
\begin{equation}
M_t f(\mathbf{x}) = h(\mathbf{x}) e^{\lambda t} \ap{\pi}{f} + O \pars{ e^{(\lambda - \omega) t} },
\end{equation}
which shows the convergence of $e^{-\lambda t} M_t$ towards a unique stationary measure $\pi$ at an exponential rate. The parameter $\lambda > 0$ is called the Malthus parameter and represents the growth rate of the population, so that $e^{-\lambda t}$ allows to rescale the mean population size as $t \to +\infty$. The function $h$ propagates the effect of the initial structure of the population. The constant $\omega$ indicates the convergence rate towards $\pi$.  

Different methods have been developed during the recent years to prove this behaviour: spectral methods, as reviewed in~\cite{Mischler2016} (see for example~\cite{Roget2017} for an application to a close model); others based on the study of the associated semigroup by Harris' theorem as proposed in some general frameworks by~\cite{Bansaye2019harris,Canizo2020,Bansaye2020Doeb} with recent applications in the models considered by~\cite{Bouguet,Tomasevic2020,cloez2020longtime}. We will follow the latter methods, using the criteria established by Meyn and Tweedie~\cite{Meyn1993iii}, namely: a petite-set condition (H1) and the existence of a Lyapunov function (H2), as given in Theorem~\ref{thm:harris}. This methods present an alternative to PDE techniques, where criteria based on the probabilistic control of moments replace the harder to obtain Poincaré-type inequalities. 

We explore two directions left open in the previous applications, which represent also the sources of our major technical issues: first, the bi-dimensionality of the dynamics, and second, the degeneracy of the transition kernel. Indeed, the underlying stochastic process consists on unidimensional trajectories over a two-dimensional space. Hence, to uniformly bound in probability the region explored by these trajectories with respect to a non-degenerate measure is not trivial. Similar difficulties have been found for other two-dimensional models such as~\cite{Fonte2021LongTB,Torres2021AMT,Costa2016}. Here, we propose to construct explicit trajectories and to average them in time with respect to a nice sampling measure. {The inclusion of time sampling allows to compensate the lack of stochasticity of the degenerate jump-transport dynamics on an unbounded state space. Indeed, it is not trivial to find a fixed time $t_0 > 0$ such that the trajectories originated from any initial state mix uniformly on the support of some non-trivial measure of the unbounded two-dimensional space $\mathcal{X}$. However, if we authorise the time $t_0$ to be sampled from some probability distribution, chances are the uniform exploration of the space will be easier to prove, as we show indeed later. More technically, the utilisation of a petite-set condition instead of a small-sets one is key to obtain the convergence in this setting. }

Moreover, compared to the previous works mentioned above, the probabilistic framework brings naturally to work with the operator $\mathcal{Q}$ instead of its dual, as in the more classical PDE settings. Thus, this work lies also in the framework of measure solutions as rigorously developed for example in~\cite{Gabriel2018} for the one-dimensional conservative case. Moreover, only the existence of eigenelements for $\mathcal{Q}$ is needed to be able to compute the Doob $h$-transform and use Harris' theorem. Then, the existence of the \textit{direct} eigenfunction associated to the classical PDE is a consequence of our main result. Our method is then in the spirit of~\cite{Canizo2020}, where the authors could exploit known results of existence of the dual eigenelements in the one-dimensional case provided by~\cite{BCG13,DoumicJauffret2010EigenelementsOA}. In our case, we will have to adapt the latter to the two-dimensional degenerate case studied here.

In particular, we will apply our method to determine the exponential convergence towards a stable size distribution in a bacterial proliferation model called the \textit{adder} model~\cite{Taheri-Araghi2015,Lin2020,Hall1991SteadySD,gabriel:hal-01742140}. Individual cells are structured by their added size $a$ which renews to 0 at each division, and their size $y$ which evolves deterministically at exponential rate. The existence of a steady-state distribution and its form was already known since~\cite{Hall1991SteadySD}, however the exponential convergence could not be obtained using entropy methods by~\cite{gabriel:hal-01742140}. Since the eigenelements of the generator are known in this case, by the direct application of Theorem~\ref{thm:harris}, our method permits to obtain the exponential convergence while evading technical issues linked to the lack of compactness of the model, which make a classic treatment by PDE and hypocoercivity methods harder to prove and less general.

{
Finally, it is worth noticing that other models share similar dynamics with the ones generated by \eqref{eq:Q}. In an unrelated context, but fairly similar setting, Piecewise Deterministic Markov Processes (PDMP) have been recently used to sample target distributions in the framework of Markov Chain Monte Carlo (MCMC) methods, as described for example in~\cite{mcpdmp}. There, an important task is to show good convergence rates of the MC-PDMP towards the target stationary distribution.  Methods relying on drift-minorisation conditions, similar to conditions (H1) and (H2) of Theorem~\ref{thm:harris}, have been proven useful in that context~\cite{zigzag,bouncy,fetique}, where issues related to dimensionality and degeneracy might also be encountered. Notice however that the processes considered in all these contexts are always conservative.}

{Other biological models can also be generated by similar semigroups, so their spectral and ergodic properties might be deduced from our results. For example, the non-trivial uniform bound estimate for the population growth rate established in Step~5 of the Proof of Proposition~\ref{prop:eigenelements} relies mainly in the assumption that the newborn sizes are almost surely smaller than the parent size, without any additional requirements for the form of the kernel $k$. In particular, we do not require conservation of mass $y = \int k((a,y),z) dz $, as in classical size-structured models~\cite{DoumicJauffret2010EigenelementsOA}. Therefore, the same arguments can be used in general age-trait models that authorise only negative jumps for the trait coordinate. Biologically, this could account for a trait evolving deterministically, and which is almost surely eroded or corrupted at each reproduction event. This is the case, for example, in some telomere-shortening models~\cite{telomere}. In particular, Assumptions~\ref{ass:1} give necessary conditions such that the growth rate $g$ compensates the \textit{fragmentation} events arriving at each reproduction, in order to preserve ergodicity.}

\section{Malthusian behaviour}

We are interested in the average dynamics as given by first-moment semigroup $M_t$ defined for every test function $f \in \mathcal{C}_b^{1,1}(\rr_+^2)$ by:
\begin{equation}
M_t f(\mathbf{x}) :=  \espero{\ap{Z_t}{f} | Z_0 = \delta_{\mathbf{x}}}  \quad \forall \mathbf{x} \in \rr_+^2
\end{equation}
Using Markov's property it's easy to see that $M_t$ verifies the semigroup property. However it is not a Markovian semigroup since it does not necessarily preserve mass (we say it is non conservative). Moreover, using the semi-martingale decomposition \eqref{eq:martingale}, we verify that $M_t$ is the semigroup associated to the extended generator $\mathcal{Q}$. This is, for every test function $f \in C^{1,1}(\cx)$, it is the weak solution of Kolmogorov's equations
\begin{equation}
\partial_t M_t f = M_t \mathcal{Q} f = \mathcal{Q} M_t f.
\end{equation}
Moreover, for any finite measure $\mu$ we define the dual semigroup as the measure $\nu M_t$ given by:
\begin{align*}
(\nu M_t) f := \nu(M_tf) = \int_{\cx} M_t f(\mathbf{x}) \nu(d{\bx})
\end{align*}
So by definition we have $(\mu M_t) f = \mu (M_t f) $ which we write as $\mu M_t f$. 

Our main result states the Malthusian behaviour of the semigroup by means of Harris' Ergodic Theorem as stated in Theorem 6.1 of~\cite{Meyn1993iii}, which we recall below in Theorem~\ref{thm:harris}. 

\begin{theorem}[$V$-uniform Ergodic Theorem (also known as Harris' Ergodic Theorem) (Theorem 6.1 of~\cite{Meyn1993iii})]
	\label{thm:harris}
	Let $(X_t)_t$ be a right-continuous Markov process with values in some locally compact separable metric space $E$ equipped with its Borelian set $\mathcal{B}(E)$, and let $\mathcal{A}$ be the infinitesimal generator of $X$. We call $P_t$ the associated transition semigroup. If the two following conditions are verified:
	\begin{enumerate}[label=(H\arabic*)]
		\item \textbf{Minorisation condition for compact sets}. 
		
		All compact sets of $E$ are petite for a skeleton chain of $X$. This is, for every compact set $\mathscr{K} \subset E$ there's a probability mass distribution $\mu = (\mu_n)_{n \in \mathbb{N}}$ over $\mathbb{N}$ and some $\Delta > 0$ such that there exists a non-trivial measure $\nu$ (which might depend on $\Delta$ and $\mu$) over $\mathcal{B}(E)$ that for every $\bx \in \mathscr{K} $ gives the following lower bound:
		\[
		\ap{\mu}{\delta_{\bx}P_{\cdot}f} = \sum_{n \in \mathbb{N}} \mu_n P_{n \Delta} f(\mathbf{x}) \geq \ap{\nu}{f} . \]					
		\item \textbf{Foster-Lyapunov drift condition.}
		
		There exists a coercive function $V$, { meaning that $V(\bx) \to +\infty$ as $ \norm{\mathbf{x}} \to +\infty$, such that $V(\bx) \geq 1$ for all $\bx$,} and there exist some $c >0$, $d < \infty$ such that
		\[
		\mathcal{A} V(\mathbf{x}) \leq - c V(\mathbf{x}) + d  \quad \forall \bx \in E
		, \]
	\end{enumerate}
	Then, there exist a unique non-trivial probability measure $\pi$ and $C, \omega > 0$ such that for every $\bx \in E$ and $t \geq 0$
	\begin{equation}
	\norm{\delta_\bx P_t - \pi}_{V} \leq C V(\mathbf{x}) \exp(-\omega t),
	\end{equation}
	{  where the $V$-norm defined by
	\[
	\norm{\mu}_{V} := \sup_{g : \norm{g} \leq V } \abs{\ap{\mu}{g}}
	\]
	is an extension of the total variation norm. In particular $\norm{\mu}_1 = \norm{\mu}_{\textrm{TV}}$.  }
\end{theorem}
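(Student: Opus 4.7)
The plan is to follow the modern, self-contained proof of Harris' theorem à la Hairer-Mattingly, adapted to the continuous-time setting via the sampled skeleton chain furnished by (H1). The strategy is to build a weighted total-variation semi-norm on signed measures of zero mass for which some averaged kernel $\bar{P}$ is a strict contraction, and then to lift this discrete-time geometric contraction to the continuous-time statement.

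First I would fix the step $\Delta > 0$ and the sampling distribution $\mu = (\mu_n)_n$ given by (H1), and work with the averaged kernel $\bar{P} := \sum_n \mu_n P_{n\Delta}$. Integrating the drift inequality (H2) yields $P_t V(\mathbf{x}) \leq e^{-ct} V(\mathbf{x}) + d/c$, and averaging against $\mu$ gives $\bar{P}V(\mathbf{x}) \leq \gamma V(\mathbf{x}) + K$ for some $\gamma \in (0,1)$ and $K < \infty$. Because $V$ is coercive, $C_R := \{V \leq R\}$ is compact, so by (H1) it is petite for the skeleton, meaning $\bar{P}(\mathbf{x}, \cdot) \geq \nu(\cdot)$ for every $\mathbf{x} \in C_R$ and some non-trivial sub-probability $\nu$. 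I would choose $R$ large enough that $2K/(1-\gamma) < R$, ensuring that outside $C_R$ the Lyapunov contraction dominates the additive constant.

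Next, on signed measures $\eta$ with $\eta(E) = 0$ I would introduce the equivalent family of norms
\[
\|\eta\|_\beta := \int (1 + \beta V(\mathbf{x})) \, |\eta|(d\mathbf{x}),
\]
and show that for $\beta > 0$ small enough there exists $\alpha \in (0,1)$ with $\|\eta \bar{P}\|_\beta \leq \alpha \|\eta\|_\beta$. The argument splits according to whether the positive and negative parts of $\eta$ are concentrated inside $C_R$: in the first case one uses the minorization to subtract a common multiple of $\nu$ from $\eta_+ \bar{P}$ and $\eta_- \bar{P}$, shrinking the total-variation component by a factor $1 - \|\nu\|$; in the complementary case at least one part is essentially supported outside $C_R$, where the Lyapunov contraction $\bar{P}V \leq \gamma V + K$ shrinks the $V$-weighted component by a factor close to $\gamma$. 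Balancing $\beta$ against $\gamma$ and $\|\nu\|$ produces a single uniform contraction.

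Finally, iteration yields $\|\delta_\mathbf{x} \bar{P}^n - \delta_\mathbf{y} \bar{P}^n\|_\beta \leq C \alpha^n (1 + V(\mathbf{x}) + V(\mathbf{y}))$, giving uniqueness of a fixed point $\pi$ and geometric convergence for the sampled chain. To transfer the bound to continuous time I would write $t = n\Delta + r$ with $0 \leq r < \Delta$, use the semigroup property together with the one-step estimate $P_r V \leq V + dr$ coming from (H2) to absorb the residual, and conclude $\|\delta_\mathbf{x} P_t - \pi\|_{1+V} \leq C (1+V(\mathbf{x})) e^{-\omega t}$. The main obstacle will be the joint tuning of $R$, $\Delta$ and $\beta$ so that $\alpha < 1$: the minorization inside $C_R$ and the Lyapunov decay outside must be weighed against each other quantitatively, which is delicate bookkeeping but requires no conceptually new input beyond (H1) and (H2).
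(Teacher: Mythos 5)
The paper does not prove this statement; it cites it wholesale as Theorem 6.1 of Meyn and Tweedie (1993), so there is no proof in the paper to compare against, and I assess your argument on its own terms. Your plan correctly identifies the key ingredients, but there is a genuine gap at the step ``iteration yields \ldots geometric convergence for the sampled chain.'' The Hairer--Mattingly contraction machinery you invoke requires a small-set condition for a \emph{fixed power} of the skeleton kernel, namely $\inf_{\bx \in C_R} P_\Delta^m(\bx,\cdot) \geq \epsilon\,\nu(\cdot)$, whereas (H1) gives only the weaker petite-set condition, i.e.\ a minorization for the \emph{averaged} kernel $\bar{P} = \sum_n \mu_n P_\Delta^n$. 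You correctly derive a contraction for $\bar{P}$ on zero-mass signed measures, hence geometric decay of $\norm[\beta]{\eta \bar{P}^n}$; but $\bar{P}^n$ is a spread-out mixture of the powers $P_\Delta^k$ (with weights the $n$-fold convolution of $\mu$), and its contraction says nothing about the decay of $\norm[\beta]{\eta P_\Delta^n}$ for a single $n$. Concretely, if $P_\Delta$ had a unit-modulus eigenvalue (a rotation on some invariant subspace), the mixture $\bar{P}$ would still contract strictly on that subspace whenever $\mu$ is not a Dirac mass, even though $P_\Delta^n$ would decay not at all. Without control of $\norm[\beta]{\eta P_\Delta^n}$, your final interpolation $t = n\Delta + r$ has nothing to interpolate from, and the proof does not close.

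The standard repair is the one Meyn and Tweedie actually use: first establish $\psi$-irreducibility and aperiodicity of the skeleton chain (for a right-continuous, continuous-time process, once $\psi$-irreducibility holds every $\Delta$-skeleton is automatically aperiodic), which upgrades every petite set to a small set for some fixed power $P_\Delta^m$ (Theorem 5.5.7 of their monograph \emph{Markov Chains and Stochastic Stability}); one can then run a contraction or Nummelin-splitting regeneration argument on $P_\Delta^m$ directly and finish with your continuous-time interpolation. That intermediate petite-to-small conversion, preceded by the aperiodicity argument, is precisely what your proposal omits, and it is indispensable in the stated generality.
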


\begin{remark}
{
    The reader familiar with other versions of $V$-uniform ergodicity theorems, such as Theorem 20.3.2 of~\cite{mtbook}, the results of~\cite{down}, or more the more recently derived version of~\cite{hairermattingly} might find that conditions (H1) and (H2) are written in a slightly exotic way, even thought they are extracted without much modification from source~\cite{Meyn1993iii}. We address briefly these potential concerns. First, it is worth noticing that petiteness condition (H1) is stressed for a skeleton chain on the process and not for the continuous-time process. This allows to circumvent issues related to periodicity. A classical pathological example is the clock process, defined by the deterministic semigroup $P_t f(x) = f(x \textrm{e}^{2 \pi i t } )$, for $x \in \mathbb{S}_1 = \{ z \in \mathbb{C} : |z| = 1\}$, $t \geq 0$. It consists on periodic orbits along $\mathbb{S}_1$. Since even irrational skeleton chains are not mixing, it is not possible to establish a uniform minorisation condition valid for all starting points of any fixed compact set $\mathscr{K}$ of $\mathbb{S}_1$. Condition (H1) is therefore not verified by the clock process. Notice, however, that for the uniform sampling measure $\mu(dt) = \mathds{1}_{[0,1]}(t)dt$, the continuous-time semigroup $P_t$ does verify Doeblin condition $\ap{\mu}{\delta_{x}P_{\cdot}} \geq \nu$, for all $x \in \mathbb{S}_1$, with $\nu$ the uniform measure over $\mathbb{S}_1$. Thus the importance of testing petiteness for the skeleton chains.  
    } 
    
    {
    Second, condition (H2) is usually stated with an indicator function over some petite set $C$, this is, as
    \begin{equation}
\mathcal{A} V \leq - c V + b \mathds{1}_C,
\tag{V4}
\end{equation}
called drift condition (V4) in~\cite{mtbook} and many later works. Indeed, from Section 5 of~\cite{down}, it can be shown that if the function $V$ is \textit{unbounded off petite sets}, i.e., if for every $n \in \mathbb{N}$, the set $\setof{\mathbf{x} \in \cx : V(\bx) \leq n}$ is either empty or petite, condition (V4) is equivalent to (H2). In our case, since $V$ is coercive and that by (H1), all compact sets are petite for some skeleton chain, we have that $V$ is unbounded off petite sets for that skeleton chain, and therefore an equivalent discrete-time version of (V4) is verified for the skeleton (condition $(\mathscr{D}_{T})$ of~\cite{down}, p. 1679). Theorem 5.1 of~\cite{down} shows finally that $(\mathscr{D}_{T})$ and (V4) (called $(\tilde{\mathscr{D}})$ therein) are actually equivalent.}
\end{remark}

Notice that we need a Markovian (conservative) semigroup. To overcome this problem, similarly as in~\cite{Canizo2020}, we perform a so-called Doob $h$-transform, to obtain a conservative semigroup $P_t$ with the dynamics of $M_t$. To do so, we require first to have some pair $(\lambda, h)$ such that $\mathcal{Q}h = \lambda h$ and $h>0$. Then, using such pair we define
\begin{equation}
P_t f(\mathbf{x}) := \frac{M_t (hf)(\mathbf{x})}{e^{\lambda t} h(\mathbf{x})}.
\label{eq:Pt}
\end{equation}
Then we can come back the ergodic behaviour of $(M_t)_{t \geq 0}$ by looking at the limit of $M_t f = e^{\Lambda t} h P_t \pars{f/h}$. In particular, the generator associated with $P_t$ is given explicitly by Eq. \eqref{eq:A}.

\begin{proposition} Suppose the existence of a pair $(\lambda, h)$, $\lambda >0$, $h >0$ such that $\mathcal{Q}h = \lambda h$. Then, $P_t$ defined by Eq. \eqref{eq:Pt} is a positive Markovian semigroup whose infinitesimal generator is given for $f \in C_b^{1,1}(\rr_+^2)$ by 
	\begin{align}
	\mathcal{A}f (\mathbf{x})
	=  g(\mathbf{x})^\top \nabla f(\mathbf{x})
	+ \beta ( \mathbf{x})  \pars{ \int_0^\infty   \sqkets{ f(0,z) - f(\mathbf{x}) } \frac{h(0, z )}{h(\mathbf{x})} k (\mathbf{x}, z) dz  }  \quad \forall \mathbf{x} \in \rr_+^2
	\label{eq:A}
	\end{align}
\end{proposition}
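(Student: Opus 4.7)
The plan is to check successively the three claims (positivity, Markov/semigroup property, generator) directly from the definition of $P_t$ and the eigenrelation $\mathcal{Q}h=\lambda h$.

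First I would verify the easy structural properties. Positivity is immediate: $M_t$ is positivity-preserving as the first-moment semigroup of the point-measure-valued process, and $h>0$, so $f\ge 0 \Rightarrow M_t(hf)\ge 0$ and hence $P_t f\ge 0$. To obtain $P_t\mathbf{1}=\mathbf{1}$, I apply Kolmogorov's equation $\partial_t M_t h = M_t \mathcal{Q}h = \lambda M_t h$ with initial datum $h$, whose solution is $M_t h = e^{\lambda t}h$; plugging into \eqref{eq:Pt} gives $P_t\mathbf{1}(\mathbf{x})=M_t h(\mathbf{x})/(e^{\lambda t}h(\mathbf{x}))=1$. The semigroup property follows by the one-line computation
\[
P_t P_s f(\mathbf{x}) = \frac{M_t\bigl(h\cdot P_s f\bigr)(\mathbf{x})}{e^{\lambda t}h(\mathbf{x})} = \frac{M_t\bigl(e^{-\lambda s}M_s(hf)\bigr)(\mathbf{x})}{e^{\lambda t}h(\mathbf{x})} = \frac{M_{t+s}(hf)(\mathbf{x})}{e^{\lambda(t+s)}h(\mathbf{x})} = P_{t+s}f(\mathbf{x}),
\]
using the semigroup property of $M_t$ and the linearity of $M_t$ in the multiplicative constant $e^{-\lambda s}$.

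The substantive step is the computation of the generator. Differentiating \eqref{eq:Pt} at $t=0$ and using $\partial_t M_t(hf)|_{t=0} = \mathcal{Q}(hf)$ yields
\[
\mathcal{A}f(\mathbf{x}) \;=\; \frac{\mathcal{Q}(hf)(\mathbf{x})}{h(\mathbf{x})} \;-\; \lambda f(\mathbf{x}).
\]
I then expand $\mathcal{Q}(hf)$ using the product rule on the transport term, $g^\top\nabla(hf) = h\, g^\top\nabla f + f\, g^\top\nabla h$, and keep the jump term as is. The task is then to show that the $f(\mathbf{x})$-coefficient rearranges into the form appearing in \eqref{eq:A}. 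This is where the eigenrelation is used: solving $\mathcal{Q}h=\lambda h$ for the drift term gives
\[
\frac{g(\mathbf{x})^\top\nabla h(\mathbf{x})}{h(\mathbf{x})} \;=\; \lambda + \beta(\mathbf{x}) - \frac{\beta(\mathbf{x})}{h(\mathbf{x})}\int_0^\infty h(0,z)\,k(\mathbf{x},z)\,dz,
\]
and substitution makes the $\lambda f$ term cancel while combining the remaining $f(\mathbf{x})$-contributions with the integral into the single $[f(0,z)-f(\mathbf{x})]$ factor of \eqref{eq:A}.

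The only genuine obstacle is bookkeeping: one has to make sure the terms that do \emph{not} admit a clean probabilistic interpretation (the transport part of $\mathcal{Q}h/h$ and the mass-loss $-\beta f$) combine exactly to produce the conservative jump kernel $\beta(\mathbf{x})\,h(0,z)k(\mathbf{x},z)/h(\mathbf{x})$ together with its compensator. Once this algebraic identity is checked the proof is complete; no analytic subtlety (such as domain of the generator beyond $C_b^{1,1}$) needs to be revisited since we already worked in that class when defining $\mathcal{Q}$.
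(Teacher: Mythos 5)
Your proof is correct and follows the same route as the paper's: differentiate $P_t f$ at $t=0$ to obtain $\mathcal{A}f=\mathcal{Q}(hf)/h-\lambda f$, expand the drift term by the product rule, and use $\mathcal{Q}h=\lambda h$ to regroup the $f(\mathbf{x})$-terms into the compensated jump operator. You additionally spell out positivity, conservativity ($P_t\mathbf{1}=\mathbf{1}$ via $M_t h=e^{\lambda t}h$), and the semigroup identity, which the paper states in the proposition but does not reverify in its proof; these checks are trivial but make the argument self-contained.
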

\begin{proof}
	By definition and evaluating at $t=0$ we have:
	\begin{align*}
	\mathcal{A}f(\mathbf{x}) &= \left. \frac{\partial}{\partial t} P_t f \right|_{t=0} (\mathbf{x}) \\
	&=\left. \frac{M_t \mathcal{Q}(hf)}{e^{\lambda t}h} - \frac{\lambda M_t(hf)}{e^{\lambda t}h}    \right|_{t=0} (\mathbf{x}) \\
	&= \frac{\mathcal{Q}(hf)(\mathbf{x})}{h(\mathbf{x})} -\lambda f(\mathbf{x}) 
	\end{align*}
	Then, using the value of $\mathcal{Q}$ applied to $hf$ and that $\mathcal{Q} h = \lambda h$ we get
	\begin{align*}
	\frac{\mathcal{Q} (hf) (\bx)}{h(\bx)} &= \frac{f(\bx)}{h(\bx)} g(\mathbf{x})^\top \nabla h(\mathbf{x}) + g(\mathbf{x})^\top \nabla f(\mathbf{x}) 
	+ \beta ( \mathbf{x})  \pars{ \int_0^\infty  h(0,z) f(0,z ) \frac{k(\mathbf{x}, z)}{h(\bx)} dz   - f(\mathbf{x})} \\
	&= \frac{f(\bx)}{h(\bx)}  \pars { g(\mathbf{x})^\top \nabla h(\mathbf{x}) - \beta(\bx) h(\bx)} +  g(\mathbf{x})^\top \nabla f(\mathbf{x})  \\
	& \qquad + \beta(\bx) \int_0^\infty  h(0,z) f(0,z ) \frac{k(\mathbf{x}, z)}{h(\bx)} dz \\
	&=\frac{f(\bx)}{h(\bx)} \pars { \lambda h(\bx) - \beta(\bx) \int_0^\infty  h(0,z)  k(\mathbf{x}, z) } + g(\mathbf{x})^\top \nabla f(\mathbf{x})  \\
	& \qquad + \beta(\bx) \int_0^\infty  h(0,z) f(0,z ) \frac{k(\mathbf{x}, z)}{h(\bx)} dz \\
	&= \lambda f(\bx) + g(\mathbf{x})^\top \nabla f(\mathbf{x})  + \beta(\bx) \int_0^\infty \sqkets{f(0,z ) - f(\bx)} \frac{h(0,z)}{h(\bx)}k(\mathbf{x}, z) dz
	\end{align*}
	Finally, subtracting $\lambda f(\bx)$ we obtain the form of generator $\mathcal{A}$.  
\end{proof}

Hence, the work is structured as follows: first, in Section 3 we prove the existence of a pair $(\lambda, h)$ which solves the eigenvalue problem $\mathcal{Q} h = \lambda h$ under the Assumptions~\ref{ass:all}. The same set of assumptions allows us to prove the Doeblin condition (H1) in Section 4. We do not provide a general Foster-Lyapunov condition (H2), suitable for our general case. However, we show its existence in our application to a growth-fragmentation model in Section 5.  This last model has already been studied since the works of~\cite{Hall1991SteadySD}, and the exponential convergence has been recently shown in~\cite{gabriel:hal-01742140} using Generalized Relative Entropy techniques. Here, we show that the knowledge of the eigenelements $(\lambda, h)$ for the generator allows to provide a simpler proof of convergence using Harris' theorem. Indeed, the arguments presented in Section~\ref{sec:eigenelementsQ} can be avoided when the existence of eigenelements is known apriori, which might be the case in several practical applications. Nonetheless, our general method allows us to give an answer to one of the perspectives listed by~\cite{gabriel:hal-01742140}, who couldn't generalise their argument in the case of a general drift function $g$. Thus, our main result reads as follows:

\begin{theorem}[Exponential ergodicity]
	Under Assumptions~\ref{ass:all} and if the Lyapunov-Foster condition (H2) of Theorem~\ref{thm:harris} is verified for some coercive function $V:\rr_+^2 \to \rr_+$, there is a unique probability measure $\pi$ such that there exist constants $C, \omega, \Lambda > 0$ which verify for every initial condition $\mu_0 \in \mathcal{M}_p(\rr_+^2)$ 
	\begin{equation}
	\norm{e^{-\Lambda t} \mu_0 M_t - \ap{\mu_0}{h}\pi}_{V} \leq C \ap{\mu_0}{V} e^{-\omega t}.
	\label{eq:exponCV}
	\end{equation}
	Moreover, $\pi$ is absolutely continuous with respect to the Lebesgue measure. 
	\label{thm:main}
\end{theorem}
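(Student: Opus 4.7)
The plan is to reduce the non-conservative statement for $M_t$ to a standard ergodicity statement for the Markovian semigroup $P_t$ obtained from the Doob $h$-transform in Eq.~\eqref{eq:Pt}, and then to apply Harris' Theorem~\ref{thm:harris} to $P_t$. Under Assumptions~\ref{ass:all}, Section~3 provides a positive eigenpair $(\lambda, h)$ with $\mathcal{Q}h = \lambda h$, so that $P_t$ is a well-defined conservative Markovian semigroup with generator $\mathcal{A}$ given by Eq.~\eqref{eq:A}. Section~4 verifies the petite-set condition (H1) on every compact of $\rr_+^2$, and (H2) is assumed in the statement. Harris' Theorem then yields a unique invariant probability measure $\tilde\pi$ for $P_t$ together with constants $C,\omega>0$ such that
\begin{equation*}
\norm[1+V]{\delta_\bx P_t - \tilde\pi} \le C\bigl(1+V(\bx)\bigr)\, e^{-\omega t}, \qquad \forall \bx \in \rr_+^2.
\end{equation*}

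To transfer this back to $M_t$, I would set $\Lambda = \lambda$ and define $\pi$ by $\pi(f) := \tilde\pi(f/h)$, normalising the eigenfunction $h$ (defined up to a multiplicative constant) so that $\tilde\pi(1/h)=1$ and $\pi$ is a probability. Using the Doob identity $M_t g(\bx) = e^{\lambda t} h(\bx)\, P_t(g/h)(\bx)$, one obtains for any finite initial $\mu_0$ and test function $g$
\begin{equation*}
e^{-\Lambda t}\mu_0 M_t g - \ap{\mu_0}{h}\pi(g) \;=\; (h\mu_0)\bigl(P_t(g/h) - \tilde\pi(g/h)\bigr).
\end{equation*}
Integrating the pointwise Harris bound against the finite measure $h\, d\mu_0$ and taking the supremum over $g$ with $|g|\le 1+V$ gives a bound of order $Ce^{-\omega t} \mu_0\bigl((1+V)h\bigr)$, and absorbing the weight $h$ by an affine function of $V$ (using quantitative control of $h$ provided in Section~3) yields the right-hand side $C(1+\mu_0 V)\, e^{-\omega t}$ of Eq.~\eqref{eq:exponCV}.

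The absolute continuity of $\pi$ with respect to the Lebesgue measure is inherited from that of $\tilde\pi$: the minorising measure built in the petite-set condition of Section~4 has a density with respect to Lebesgue (it is constructed from the jump density $k(\bx,z)\, dz$ time-averaged along the flow), and classical Meyn-Tweedie arguments then force $\tilde\pi$, and hence $\pi$, to be absolutely continuous. The main obstacle in carrying out this plan is not the Harris step itself but rather the verification of (H1) in Section~4 in this two-dimensional degenerate non-compact setting, where the absence of noise in the $(a,y)$-flow must be compensated by time-averaging against the random jump times; a secondary but non-trivial point is the precise matching of the $(1+V)$-weighted norms across the $h$-transform, which forces one to couple the admissible Lyapunov functions $V$ (produced case by case, as in Section~5) to the growth of the eigenfunction $h$ so that $\mu_0((1+V)h) \lesssim 1+\mu_0 V$ holds uniformly.
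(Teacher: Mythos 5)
Your overall route — reduce via the Doob $h$-transform to the conservative semigroup $P_t$, invoke Harris' Theorem~\ref{thm:harris} using the eigenelements of Section~3 and the petite-set condition of Section~4, then pull back to $M_t$ — is exactly the paper's, which at the end of Section~4 disposes of Theorem~\ref{thm:main} in one sentence as ``a direct application of Harris' theorem.'' The reduction identity you write out, the identification $\pi(f)=\tilde\pi(f/h)$ with the normalisation $\tilde\pi(1/h)=1$, and the absolute-continuity argument via the Lebesgue-absolutely-continuous minorising measure are all correct and faithful to the paper's logic.

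The one step that does not close as written is the transfer of the weighted-norm estimate. If $|g|\le 1+V$, the test function that $P_t$ sees is $f=g/h$ and satisfies $|f|\le (1+V)/h$, \emph{not} $|f|\le 1+V$; so the pointwise Harris bound in the $(1+V)$-norm does not apply to $f$ unless $h$ is bounded away from $0$, and your intermediate bound $Ce^{-\omega t}\mu_0\bigl((1+V)h\bigr)$ does not follow from Harris as stated. Dually, the absorption $\mu_0\bigl((1+V)h\bigr)\lesssim 1+\mu_0 V$ requires $h$ bounded \emph{above}. In the adder application $h(a,y)=y$ is bounded neither above nor below, so both halves of your argument would fail there. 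The correct bookkeeping is that
\[
\norm[1+V]{e^{-\Lambda t}\mu_0 M_t - \ap{\mu_0}{h}\pi} \;=\; \norm[(1+V)/h]{(h\mu_0)P_t - \ap{\mu_0}{h}\tilde\pi},
\]
so one should apply Harris with a Lyapunov function $V_P$ for $\mathcal{A}$ chosen so that $h\,(1+V_P)\asymp 1+V$ (equivalently $V_P\asymp (1+V)/h - 1$) and verify the Foster–Lyapunov drift for \emph{that} $V_P$. Your closing sentence gestures at exactly this coupling between $V$ and $h$, but the two intermediate inequalities you write in the body of the argument are the wrong way around to implement it. The paper's own terse proof elides the same bookkeeping, and its statement of Theorem~\ref{thm:main} uses a single $V$ for both the drift condition on $\mathcal{A}$ and the $M_t$-norm, which tacitly presupposes the compatibility above; so this is a point worth spelling out, but it is not an error peculiar to your proposal.
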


\section{Preliminary definitions and assumptions}
\label{sec:pre}

We begin by recalling some useful properties of the deterministic flow, which are classical results for an autonomous system of first order ODE (refer for example to Theorem D.1 of~\cite{lee2003introduction}):

\begin{lemma}[Flow properties and notations.]
	Let $\mathbf{x} \in \rr_+^2$. Consider $g = (g_1,g_2) \in C^1(\rr_+^2)$ and suppose that $g_1 > 0$. The autonomous first-order system of Ordinary Differential Equations (ODE)
	\begin{equation}
	\begin{split}
	\frac{d \mathbf{u}(t)}{dt} &= g \pars{
		\mathbf{u}(t)} \ , \ t \in \rr \\
	\mathbf{u}(0) &= \mathbf{x}
	\end{split}
	\label{eq:ODE}
	\end{equation}
	defines a unique flow $
	\varphi^t : \cx \ni \mathbf{x} \mapsto \varphi^t \pars{\bx} \in \cx$ which is the solution $\mathbf{u}(t)$ of \eqref{eq:ODE} at time $t$ with initial condition $\mathbf{x} \in \cx$ where $\cx = \bigcup_{y \geq 0} \Gamma_{(0,y)}^+$ where $\Gamma_{\bx}^+$ will be defined below. We write $\varphi^t = (\varphi^t_1, \varphi^t_2)$ for the marginal flows of the age and size. We define then $\Gamma_{\mathbf{x}}^+ = \setof{\varphi^t (\mathbf{x}) , t \geq 0}$ and $\Gamma_{\mathbf{x}}^- = \setof{\varphi^t (\mathbf{x}) , t \leq 0}$ and call $\Gamma_{\mathbf{x}} = \Gamma_{\mathbf{x}}^+ \cup \Gamma_{\mathbf{x}}^-$ the unique orbit passing through $\mathbf{x}$.  Moreover:
	\begin{enumerate}
		\item The flow is a group in the time variable: $\varphi^t \varphi^s = \varphi^{t+s} = \varphi^s \varphi^t $, $\varphi^0 = \mathrm{Id}$, and has inverse $\pars{\varphi^{t}}^{-1} = \varphi^{-t}$, which is the solution to the ODE $\mathbf{u}'(t) = -g \pars{
			\mathbf{u}(t)}$.
		\item The flow depends smoothly on the initial conditions: $\forall t \in \rr$, $\varphi^t \in C^1(\rr_+^2)$. We call $
		\mathcal{D}\varphi^t (\mathbf{x}) $ the Jacobian matrix of the flow with respect to the initial condition.
		\item For all fixed $\mathbf{x} = (a_0,y_0) \in \cx$, if $g_1 > 0$, then there is a unique function $Y_\mathbf{x} : \rr_+ \to \rr_+$ such that for all $(a, y) \in \Gamma_{\mathbf{x}}$, we have $Y_{\mathbf{x}}(a) = y$. This represents the size at a given age of an individual with initial condition $\mathbf{x}$. In other words, for all $t \geq 0$,
		\[
		\varphi^t \pars{\bx} = \pars{a(t), Y_\bx (a(t))}.
		\]
		Moreover, $Y_{\mathbf{x}} \in C^1(\rr_+)$ and it is solution of the first order one-dimensional ODE 
		\[Y_{\mathbf{x}}'(a) = \frac{g_2(a,Y_{\mathbf{x}}(a))}{g_1(a,Y_{\mathbf{x}}(a))} \quad ; \quad Y_{\mathbf{x}}(a_0) = y_0 \]
		Analogously, one defines its inverse function $A_{\mathbf{x}}(y)$ which gives the age at size $y$ for an individual with initial condition $\mathbf{x}$, and hence verifies
		\[
		\varphi^t \pars{\bx} = \pars{A_\bx(y(t)), y(t)} , \quad t \geq 0.
		\]			
		\item For all fixed $\mathbf{x} \in \cx$, we write $\phi_{\mathbf{x}}(t) := \varphi^t(\mathbf{x})$ as a function of time (from $\rr$ to $\rr_+^2$). Then, the inverse function $\phi_{\mathbf{x}}^{-1} : \Gamma_\bx \to \rr$ such that $\phi_{\mathbf{x}}^{-1}(\phi_{\mathbf{x}}(t)) = t$ is well defined. For every $\bx_0 \in \cx$ and $\bx_1 \in \Gamma_{\bx_0}$ we read $\phi_{\bx_0}^{-1}(\bx_1)$ as \textit{the time needed along} $\Gamma_{\bx_0}$ \textit{to go from} $\bx_0$ \textit{to} $\bx_1$.			
		Moreover if we write $\bx_0 = (a_0,y_0), \ \bx_1 = (a_1,y_1)$, this quantity is given by
		\[
		\phi_{\bx_0}^{-1}(\bx_1) = \int_{a_0}^{a_1} \frac{1}{g_1 \pars{a, Y_{\bx_0}(a)}  } da = \int_{y_0}^{y_1} \frac{1}{g_2 \pars{A_{\bx_0}(y), y}  } dy.
		\]
		Importantly, for the set of assumptions given below, we have $ 0 < \phi_{\bx_0}^{-1}(\bx_1) < \infty$ for all $\bx_0 \in \cx \setminus \setof{0}$ and $\bx_1 \in \Gamma_{\bx_0}$.
	\end{enumerate}
	\label{lemma:flowprop}
\end{lemma}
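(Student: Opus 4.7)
The plan is to treat the four items in order, leaning throughout on classical ODE theory and on the standing hypothesis $g_1 > 0$. First, for item (1), I would invoke Picard--Lindel\"of: since $g \in C^1(\rr_+^2)$ is locally Lipschitz, \eqref{eq:ODE} admits a unique maximal solution for every initial condition, and forward completeness on $\cx$ comes for free from the definition $\cx = \bigcup_{y \geq 0} \Gamma_{(0,y)}^+$. The group property $\varphi^{t+s} = \varphi^t \circ \varphi^s$ and $\varphi^0 = \mathrm{Id}$ would then follow from uniqueness applied to the Cauchy problem with initial value $\varphi^s(\bx)$; the identification $\pars{\varphi^t}^{-1} = \varphi^{-t}$ comes from checking that $\psi(t) := \varphi^{-t}(\bx)$ solves $\psi'(t) = -g(\psi(t))$, again by uniqueness.

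For item (2), I would appeal to the standard smooth-dependence-on-initial-conditions theorem (precisely the theorem cited from \cite{lee2003introduction}). Differentiating $\partial_t \varphi^t(\bx) = g(\varphi^t(\bx))$ in $\bx$ produces the variational equation $\partial_t J(t) = \mathcal{D}g(\varphi^t(\bx)) J(t)$ with $J(0) = I$, where $J(t) = \mathcal{D}\varphi^t(\bx)$. The displayed identity is to be read as the resolvent of this linear time-dependent system; it coincides with an ordinary matrix exponential under the (implicit) assumption that the matrices $\mathcal{D}g(\varphi^s(\bx))$ commute along the orbit, which is the only regime in which the formula is used later.

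For item (3), the key observation is that $g_1 > 0$ makes the age component $a(t) = \varphi_1^t(\bx)$ a strictly increasing $C^1$ function of time, hence a $C^1$-diffeomorphism from its time interval onto its image. Inverting it gives $t = t(a)$, and defining $Y_\bx(a) := \varphi_2^{t(a)}(\bx)$ represents the orbit $\Gamma_\bx$ as a graph over the age axis. The chain rule then immediately yields $Y_\bx'(a) = g_2(a, Y_\bx(a))/g_1(a, Y_\bx(a))$, with initial value $Y_\bx(a_0) = y_0$; the companion function $A_\bx$ is obtained by the symmetric argument as soon as $g_2$ is positive along the orbit, which Assumptions \ref{ass:all} will guarantee for $\bx \neq 0$. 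Item (4) is then an immediate corollary of the change-of-variables $dt = da/g_1 = dy/g_2$ along $\Gamma_{\bx_0}$, integrated between the prescribed endpoints.

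The main obstacle I anticipate is not analytical but bookkeeping: ensuring $Y_\bx$ and $A_\bx$ are globally defined on the relevant half-lines, and that $\phi_{\bx_0}^{-1}(\bx_1)$ is finite and strictly positive for every $\bx_1 \in \Gamma_{\bx_0}$ with $\bx_0 \in \cx \setminus \setof{0}$. Global monotonicity of $a(t)$ comes free from $g_1 > 0$; finiteness and non-triviality of the travel time, however, require uniform bounds on $g_1$ (and $g_2$) along the orbit, which must be extracted from Assumptions \ref{ass:all}. Everything else is a translation of the dynamics from the time parameter to the age or size parameter.
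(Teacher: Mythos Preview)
Your sketch is correct and follows the standard route. Note, however, that the paper does not supply a proof of this lemma at all: it is introduced with the sentence ``We begin by recalling some useful properties of the deterministic flow, which are classical results for an autonomous system of first order ODE (refer for example to Theorem D.1 of \cite{lee2003introduction})'', and the four items are simply stated. So there is no ``paper's own proof'' to compare against; you have filled in what the paper delegates to a reference.

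One point worth flagging: you are right to be cautious about the displayed formula $\mathcal{D}\varphi^t(\bx) = \exp\pars{\int_0^t \mathcal{D}g(\varphi^s(\bx))\,ds}$. The variational equation $\partial_t J = \mathcal{D}g(\varphi^t(\bx)) J$, $J(0)=I$, has as its solution the time-ordered exponential, which equals the ordinary matrix exponential of the integrated coefficient only when the family $\{\mathcal{D}g(\varphi^s(\bx))\}_s$ is commutative. The paper states the formula without that caveat, and your reading of it as the resolvent of the variational system is the right way to interpret it. In the application (Section~5) one has $g(a,y)=(\lambda y,\lambda y)$, so $\mathcal{D}g$ is constant and the issue does not arise; in the general estimates (the bound \eqref{eq:boundDet}) only the operator norm of $\mathcal{D}\varphi^t$ is used, and that is controlled by Gr\"onwall applied to the variational equation regardless of commutativity. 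So the imprecision in the lemma's statement is harmless for the paper's purposes, and your proposal handles it correctly.
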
		
Let us also consider the following probability space which well be useful to compute and interpret some of the estimates which will be obtained below.
\begin{definition}
	Consider a probability space $(\rr_+,  \mathcal{B}(\rr_+), \mathbb{P}_{\bx})$ in which the random couple $(T,Z) \in \rr_+ \times \rr_+$ gives the first jump time $T$ and size $Z$ after this first jump of a trajectory beginning at $\bx \in \cx$. Hence, for all $\bx \in \cx$, the couple $(T,Z)$ has joint probability density
	\[
	p_{\bx}(t,z) = \frac{1}{C_\bx} k (\varphi^t (\bx),z ) \psi(t\vert \bx) ,
	\]
	where the normalisation constant is given by
	\[
	C_\bx = \int_0^\infty \int_0^\infty k (\varphi^t (\bx),z' ) \psi(t\vert \bx) dt dz',
	\]
	which is the mean number of offspring produced by an individual of initial configuration $\bx$ after its first jump, and
	\[
	\psi(t\vert \bx) = \beta(\varphi^t(\bx)) \exp \pars{ - \int_0^t \beta \pars{\varphi^s(\bx) } ds  }
	\]
	is the marginal probability density of the time of the first jump, conditionally to the initial configuration $\bx$, and which is well defined for the set of assumptions given below. We write $\mathbb{E}_{\bx}$ the associated expectation. Fig.~\ref{fig:defs} summarises the definitions introduced in this section.
	\label{def:Px}
\end{definition}	

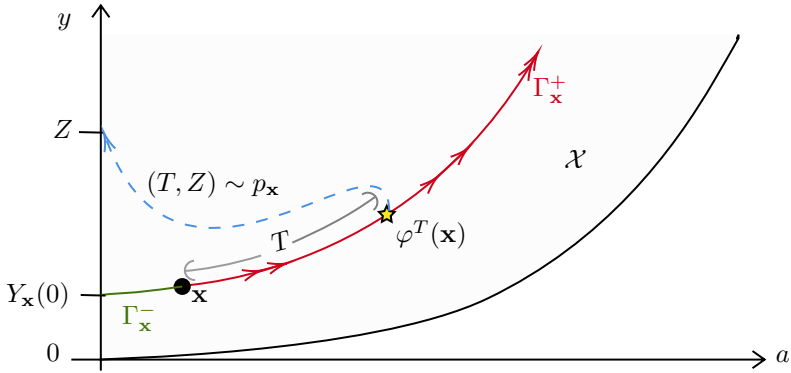
\begin{figure}[ht]
	\centering

	\tikzset{every picture/.style={line width=0.75pt}} 
	
	\begin{tikzpicture}[x=0.65pt,y=0.65pt,yscale=-1,xscale=1]
	
	\draw [color={rgb, 255:red, 0; green, 0; blue, 0 }  ,draw opacity=1 ][fill=gray!10 ,fill opacity=0.2]   (141,209.96) .. controls (136.2,209.56) and (296,207.39) .. (364,174.39) .. controls (432,141.39) and (471.41,87.99) .. (495.04,46.99) .. controls (518.68,5.99) and (504.58,29.66) .. (509,20.39) ;
	\draw  [draw opacity=0][fill=gray!10  ,fill opacity=0.2 ] (509,20.39) -- (141,209.96) -- (140.93,20.52) -- cycle ;
	\draw  (124,209.96) -- (524,209.96)(141,3.39) -- (141,216.39) (517,204.96) -- (524,209.96) -- (517,214.96) (136,10.39) -- (141,3.39) -- (146,10.39)  ;
	\draw    (128.41,77.29) -- (142.74,77.63) ;
	\draw [color={rgb, 255:red, 208; green, 2; blue, 27 }  ,draw opacity=1 ]   (183.67,167.33) .. controls (177.4,166.8) and (317,168) .. (393.4,30.4) ;
	\draw [shift={(393.4,30.4)}, rotate = 119.04] [color={rgb, 255:red, 208; green, 2; blue, 27 }  ,draw opacity=1 ][line width=0.75]    (17.64,-3.29) .. controls (13.66,-1.4) and (10.02,-0.3) .. (6.71,0) .. controls (10.02,0.3) and (13.66,1.4) .. (17.64,3.29)(10.93,-3.29) .. controls (6.95,-1.4) and (3.31,-0.3) .. (0,0) .. controls (3.31,0.3) and (6.95,1.4) .. (10.93,3.29)   ;
	\draw [color={rgb, 255:red, 128; green, 128; blue, 128 }  ,draw opacity=1 ]   (251.5,141.75) .. controls (267,134.75) and (283.5,126.75) .. (299.5,114.89) ;
	\draw [shift={(299.5,114.89)}, rotate = 143.45] [color={rgb, 255:red, 128; green, 128; blue, 128 }  ,draw opacity=1 ][line width=0.75]      (5.59,-5.59) .. controls (2.5,-5.59) and (0,-3.09) .. (0,0) .. controls (0,3.09) and (2.5,5.59) .. (5.59,5.59) ;
	\draw [color={rgb, 255:red, 155; green, 155; blue, 155 }  ,draw opacity=1 ]   (233.5,149.25) .. controls (216.83,154.58) and (198.5,157.75) .. (189.5,158.25) ;
	\draw [shift={(189.5,158.25)}, rotate = 356.82] [color={rgb, 255:red, 155; green, 155; blue, 155 }  ,draw opacity=1 ][line width=0.75]      (5.59,-5.59) .. controls (2.5,-5.59) and (0,-3.09) .. (0,0) .. controls (0,3.09) and (2.5,5.59) .. (5.59,5.59) ;
	\draw  [fill={rgb, 255:red, 248; green, 231; blue, 28 }  ,fill opacity=1 ] (306,120.31) -- (307.4,123.36) -- (310.52,123.85) -- (308.26,126.23) -- (308.79,129.58) -- (306,128) -- (303.21,129.58) -- (303.74,126.23) -- (301.48,123.85) -- (304.6,123.36) -- cycle ;
	\draw  [fill={rgb, 255:red, 0; green, 0; blue, 0 }  ,fill opacity=1 ] (183.67,167.33) .. controls (183.67,164.94) and (185.61,163) .. (188,163) .. controls (190.39,163) and (192.33,164.94) .. (192.33,167.33) .. controls (192.33,169.73) and (190.39,171.67) .. (188,171.67) .. controls (185.61,171.67) and (183.67,169.73) .. (183.67,167.33) -- cycle ;
	\draw [color={rgb, 255:red, 208; green, 2; blue, 27 }  ,draw opacity=1 ]   (351.55,87.95) -- (335.45,103.25) ;
	\draw [shift={(335.45,103.25)}, rotate = 136.46] [color={rgb, 255:red, 208; green, 2; blue, 27 }  ,draw opacity=1 ][line width=0.75]    (10.93,-3.29) .. controls (6.95,-1.4) and (3.31,-0.3) .. (0,0) .. controls (3.31,0.3) and (6.95,1.4) .. (10.93,3.29)   ;
	\draw [shift={(353,86.57)}, rotate = 136.46] [color={rgb, 255:red, 208; green, 2; blue, 27 }  ,draw opacity=1 ][line width=0.75]    (10.93,-3.29) .. controls (6.95,-1.4) and (3.31,-0.3) .. (0,0) .. controls (3.31,0.3) and (6.95,1.4) .. (10.93,3.29)   ;
	\draw [color={rgb, 255:red, 208; green, 2; blue, 27 }  ,draw opacity=1 ]   (246.41,154.37) -- (233.76,157.93) ;
	\draw [shift={(233.76,157.93)}, rotate = 164.29] [color={rgb, 255:red, 208; green, 2; blue, 27 }  ,draw opacity=1 ][line width=0.75]    (10.93,-3.29) .. controls (6.95,-1.4) and (3.31,-0.3) .. (0,0) .. controls (3.31,0.3) and (6.95,1.4) .. (10.93,3.29)   ;
	\draw [shift={(248.33,153.83)}, rotate = 164.29] [color={rgb, 255:red, 208; green, 2; blue, 27 }  ,draw opacity=1 ][line width=0.75]    (10.93,-3.29) .. controls (6.95,-1.4) and (3.31,-0.3) .. (0,0) .. controls (3.31,0.3) and (6.95,1.4) .. (10.93,3.29)   ;
	\draw [color={rgb, 255:red, 74; green, 144; blue, 226 }  ,draw opacity=1 ] [dash pattern={on 4.5pt off 4.5pt}]  (307.4,123.36) .. controls (307,66.68) and (187.21,203.02) .. (143.4,79.51) ;
	\draw [shift={(142.74,77.63)}, rotate = 71.16] [color={rgb, 255:red, 74; green, 144; blue, 226 }  ,draw opacity=1 ][line width=0.75]    (10.93,-3.29) .. controls (6.95,-1.4) and (3.31,-0.3) .. (0,0) .. controls (3.31,0.3) and (6.95,1.4) .. (10.93,3.29)   ;
	\draw [color={rgb, 255:red, 65; green, 117; blue, 5 }  ,draw opacity=1 ]   (140.67,172.33) .. controls (157,170.39) and (153,172.39) .. (188,167.33) ;
	\draw    (129.41,172.29) -- (143.74,172.63) ;
	
	\draw (191.67,167.73) node [anchor=north west][inner sep=0.75pt]  [font=\normalsize]  {$\mathbf{x}$};
	\draw (388.67,42.4) node [anchor=north west][inner sep=0.75pt]  [color={rgb, 255:red, 208; green, 2; blue, 27 }  ,opacity=1 ]  {$\Gamma _{\mathbf{x}}^{+}$};
	\draw (108,199.4) node [anchor=north west][inner sep=0.75pt]    {$0$};
	\draw (111.67,68.23) node [anchor=north west][inner sep=0.75pt]    {$Z$};
	\draw (528.67,203.07) node [anchor=north west][inner sep=0.75pt]    {$a$};
	\draw (114.33,6.4) node [anchor=north west][inner sep=0.75pt]    {$y$};
	\draw (236.07,136.19) node [anchor=north west][inner sep=0.75pt]  [color={rgb, 255:red, 0; green, 0; blue, 0 }  ,opacity=1 ,rotate=-344.11,xslant=-0.05]  {$T$};
	\draw (309.4,126.76) node [anchor=north west][inner sep=0.75pt]    {$\varphi ^{T}(\mathbf{x})$};
	\draw (406.67,86.23) node [anchor=north west][inner sep=0.75pt]    {$\mathcal{X}$};
	\draw (165.67,98.23) node [anchor=north west][inner sep=0.75pt]    {$( T,Z) \sim p_{\mathbf{x}}$};
	\draw (151.67,175.4) node [anchor=north west][inner sep=0.75pt]  [color={rgb, 255:red, 65; green, 117; blue, 5 }  ,opacity=1 ]  {$\Gamma _{\mathbf{x}}^{-}$};
	\draw (84.67,163.23) node [anchor=north west][inner sep=0.75pt]    {$Y_{\mathbf{x}}( 0)$};

	\end{tikzpicture}
	\caption{Flow notations introduced in Lemma~\ref{lemma:flowprop} and the probabilistic definition of the random couple $(T,Z)$ introduced in Definition~\ref{def:Px}.}
	\label{fig:defs}
\end{figure}

Now, let us consider the following set of assumptions, whose biological meaning and implications are commented below.	
\begin{assumptions}
	Assume that we have
	\begin{enumerate}[(i)]
		\item \textit{Smooth and uniformly controlled flow:} $g = (g_1,g_2) \in C^1(\rr_+^2)$, $g_1 > 0$ and there are some constants $c_0, c_1, c_2 > 0$ such that for all $(a,y) \in \rr_+^2$
		\begin{align*}g_1(a,y) \geq c_0 \ a,  \quad g_1(a,y) \leq c_1(1+a), \quad g_2(a,y) \leq c_1(1+y), \\
		\vert  \partial_a g_i(a,y) \vert  \leq c_2(1+a+y), \quad  \vert  \partial_y g_i(a,y) \vert  \leq c_2(1+a+y).
		\end{align*}
		and for all $y>0$, $a\geq 0$, we have $g_2(a,y) \leq g_2(0,y)$.
		\item \textit{Regular reproduction rate:} $\beta \in C(\rr_+^2,\rr_+)$, and $B = \beta/g_1 \in C(\rr_+^2)$, such that there are constants $a^*, \beta_-,\beta_+ >0$ s.t. for all $a > a^* y \geq 0$,  $\beta_- < B(a,y) < \beta_+$, and $B(a,y) = 0$ for all $a \leq a^*$. 
		\item \textit{Regular transition kernel:} For all $z \geq 0$, $\mathbf{x} \mapsto k(\mathbf{x},z)$ is a continuous function on $\rr_+^2$,  and for all $\mathbf{x} \in \rr_+^2$, $z \mapsto k(\mathbf{x},z)$ is a continuous function on $\rr_+$. The total offspring of individuals of trait $\mathbf{x}$ is $\norm{k(\bx,\cdot) }_{1}:= \int_0^{+\infty} k(\mathbf{x}, z) dz $ with  $1 < \norm{k(\bx,\cdot) }_{1} \leq \bar K$ for all $\bx \in \rr_+^2$. In particular, we consider two distinct cases:
		\begin{enumerate}
			\item \textit{Fragmentation kernel:} For all $a\geq 0$, $\textrm{supp } k(a,y,\cdot) \subseteq (0,y)$.
			\item \textit{Compactly supported mutational kernel:} It exists a compact set $S \subset \rr_+$ such that for all $a\geq 0$, $\textrm{supp } k(a,y,\cdot) \subseteq S$, and some interval $I \subset \rr_+$ and $\epsilon_0 > 0$ such that for all $y \in S$ and $z \in S \cap \mathbb{B}_{\epsilon_0}(y)$, the open ball of radius $\epsilon_0$ around $y$, we have $I \subset \setof{a >0 :  \beta(a, y) k(a, y, z)  > 0}$.
		\end{enumerate}
		\item \textit{Lower bounded transition kernel:} For all fixed value of $z > 0$, there exists some non-empty open interval $D(z)$ with length bounded between $\delta_-$ and $\delta_+$, both independent of $z$, and a positive value $\varepsilon(z)$ such that for all $\mathbf{x} \in \rr_+^2$, $k(\mathbf{x}, z) > \varepsilon(z) \mathds{1}_{D(z)} (\mathbf{x})$. 
	\end{enumerate}
	\label{ass:all}
\end{assumptions}	
We comment on the meaning of these assumptions. Assumption~\ref{ass:all}-(i) ensures that the size and age do not explode in finite time. The control on the derivatives will also allow to control the influence of the initial conditions on the flow (Lemma 3.1.2). Assumption~\ref{ass:all}-(ii) allows to write the division rate as $\beta(\mathbf{x}) = g_1(\mathbf{x}) B(\mathbf{x})$ where function $B$ should be interpreted as an ``\textit{age hazard rate}", {a generalisation of the adder division rate introduced in Example~\ref{ex:adder}. Thus, we allow ourselves to have unbounded division rates, provided that the \textit{age hazard rate} $B$ is bounded, which will allow to control nonetheless the law of ages at division}.  Biologically this has been interpreted as individuals not perceiving actual time, but rather their own biological age, upon which the division event is decided~\cite{Taheri-Araghi2015}. The parameter $a^*$ is the minimal division age. It imposes that it is not possible to divide immediately after birth. For ages bigger than $a^*$, the bounds on $B$ allow to stochastically bound the age at division between two exponential random variables of rate parameter $\beta_-$ and $\beta_+$.  Assumption~\ref{ass:all}-(iii) imposes inexact cell divisions {which always give a bounded number of individuals, but almost surely more than 1, which sets us in the supercritical case.} The two considered cases bring together a broad family of transition kernels used in similar models. In particular the assumptions concerning the mutational kernel are inspired from~\cite{Roget2017}. Importantly, the compactness is needed to prove the existence of the eigenelements of $\mathcal{Q}$ but not for the Doeblin minorisation, which holds in more general cases. In this line, Assumption~\ref{ass:all}-(iv) is key to obtain the Doeblin minorisation condition and generalises similar requirements needed in the one-dimensional case, such as Eq. (8) of~\cite{Canizo2020} for auto-similar fragmentation kernels, and Eq. (10) of~\cite{CloezGabriel20}, or Assumption (A4) of~\cite{Roget2017} for general non-local mutation-type kernels. {Finally, a major difference with respect to classical size-structured models, is that we do not require conservation of mass during reproduction events}.

\section{Existence of the eigenelements of \texorpdfstring{$\mathcal{Q}$}{Q}}
\label{sec:eigenelementsQ}	
Now, in order to bring ourselves to the conservative setting, we begin by showing the existence of some pair of eigenelements for $\mathcal{Q}$. 	
\begin{proposition}[Existence of eigenelements]  	\label{prop:eigenelements}
Under Assumptions~\ref{ass:all}, there exist a positive constant $\lambda > 0$ and a positive function $h \in W^{1,\infty}_{\textrm{loc}}(\cx)$ such that
\[
\mathcal{Q}h = \lambda h. 
\]
\end{proposition}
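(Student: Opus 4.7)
The plan is to reduce the eigenproblem $\mathcal{Q}h = \lambda h$ to a fixed-point equation on the boundary $\setof{a=0}$ by the method of characteristics, and then to apply a Krein--Rutman argument to construct a positive boundary eigenfunction and tune $\lambda$.

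First, along the characteristic $t \mapsto \varphi^t(\bx_0)$ the eigenvalue equation becomes the linear nonlocal ODE
\[
\frac{d}{dt} h(\varphi^t(\bx_0)) = (\lambda + \beta(\varphi^t(\bx_0)))\, h(\varphi^t(\bx_0)) - \beta(\varphi^t(\bx_0)) \int_0^\infty h(0,z)\, k(\varphi^t(\bx_0),z)\, dz.
\]
Using the integrating factor $e^{-\lambda t}\Psi_t(\bx_0)$, with $\Psi_t(\bx_0) = \exp\pars{-\int_0^t \beta(\varphi^s(\bx_0))\, ds}$, and requiring that $h$ does not explode along the flow (so that the resulting bracket vanishes as $t\to\infty$), Duhamel's formula yields
\[
h(\bx_0) = \int_0^\infty\!\!\int_0^\infty e^{-\lambda s}\,\beta(\varphi^s(\bx_0))\,\Psi_s(\bx_0)\, k(\varphi^s(\bx_0), z)\, h(0,z)\, dz\, ds,
\]
so $h$ is entirely determined on $\cx$ by its trace $H(y) := h(0,y)$. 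Setting $\bx_0 = (0,y)$ produces a closed linear equation $H = L_\lambda H$ on $\rr_+$, where $L_\lambda$ is the positive integral operator read off from the right-hand side.

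Next, I would check that for every $\lambda > 0$, $L_\lambda$ is a compact positive operator on a suitable Banach space of functions of $y$: continuous functions on the compact set $S$ in the mutational sub-case of Assumption \ref{ass:all}-(iii), or a weighted space adapted to $\mathrm{supp}\, k(\bx, \cdot) \subseteq (0, y)$ in the fragmentation sub-case. Regularity of the flow (i), of $\beta$ (ii) and of $k$ (iii) make Arzelà--Ascoli applicable on the image, while the pointwise minoration of $k$ from (iv) together with the lower bound on $B$ from (ii) gives irreducibility. The Krein--Rutman theorem then yields a positive eigenfunction $H_\lambda$ of $L_\lambda$ with eigenvalue $\rho(L_\lambda)$. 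The map $\lambda \mapsto \rho(L_\lambda)$ is continuous and strictly decreasing; it tends to $0$ as $\lambda \to +\infty$ because of the factor $e^{-\lambda s}$, and exceeds $1$ for $\lambda$ small since $\int k(\bx,z)\, dz > 1$ by (iii) and the first jump occurs almost surely in finite time (from $\beta_- \leq B$ on $\{a > a^*\}$ in (ii)). An intermediate-value argument then pinpoints a unique $\lambda > 0$ with $\rho(L_\lambda) = 1$.

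Finally, defining $h$ on $\cx$ by the Duhamel formula above with $H = H_\lambda$ gives a positive function, and the local Lipschitz estimates on $\varphi^t$, $\beta$, $\Psi$ and $k$ furnished by Assumption \ref{ass:all}-(i)--(iii) yield $h \in W^{1,\infty}_{\mathrm{loc}}(\cx)$; differentiating the representation along any characteristic recovers $\mathcal{Q}h = \lambda h$ a.e. The main obstacle I anticipate is the compactness of $L_\lambda$ in the fragmentation sub-case, where $y$ is unbounded: one has to select a weighted Banach space in which $L_\lambda$ is genuinely compact and yet still contains the candidate eigenfunction, presumably using the fact that fragmentation strictly decreases $y$. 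A secondary delicate point is the lower bound $\rho(L_0) > 1$, which requires a careful quantitative estimate combining the kernel mass exceeding $1$ with a uniform positive division probability along each characteristic.
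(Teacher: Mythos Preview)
Your overall strategy---reduction to a boundary fixed-point equation via characteristics, then Krein--Rutman on the resulting one-dimensional integral operator, with a continuity/monotonicity argument in $\lambda$ to tune the spectral radius to one---is exactly the paper's approach, and you have correctly identified the main obstacle: in the fragmentation sub-case the operator $L_\lambda$ acts on functions of an unbounded variable and is not compact on any obvious space.

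The paper resolves this not by selecting a weighted Banach space but by a truncation scheme: it introduces a modified operator $\mathcal{G}_\lambda^R$ on $C([0,R])$, where the truncated kernel is perturbed by an artificial uniform term $\frac{1}{R}\int_R^\infty k(\varphi^t(0,y),\zeta)\,d\zeta$ to guarantee strict positivity, applies Krein--Rutman there, locates $\lambda_R$ with spectral radius one, and then sends $R \to \infty$. The extra ingredients you would need are a uniform-in-$R$ upper bound on $\lambda_R$ and enough equicontinuity of the family $(h_R)$ to extract a limit. The paper obtains these from the observation that, since $\beta(0,\cdot)\equiv 0$ (a consequence of $a^*>0$ in Assumption~(ii)), the boundary eigenfunction is \emph{explicit}: $\eta_R(y)=\exp\bigl(\lambda_R\int_1^y g_2(0,z)^{-1}\,dz\bigr)$. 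This yields an Euler--Lotka-type characterisation of $\lambda_R$, and boundedness then follows by dominated convergence, using that in the fragmentation case the first jump time almost surely exceeds the deterministic flow time from the initial size to the post-jump size (and $g_2(a,y)\leq g_2(0,y)$ from Assumption~(i)). Your weighted-space route may also work, but you would need to identify a weight for which both compactness and the eigenfunction's membership hold simultaneously, which is not obvious here. Your secondary concern about $\rho(L_0)>1$ is in fact immediate: it follows directly from $\int k(\bx,z)\,dz>1$ in Assumption~(iii) and the fact that $\psi(\cdot\mid\bx)$ is a probability density, with no further quantitative estimate needed.
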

To do so, we can reformulate the eigenvalue problem as a one-dimensional fixed point problem. This is a classical strategy and other applications in two-dimensional spaces can be found for example in~\cite{Doumic2007,Heijmans86,gabriel:hal-01742140}. In particular, we follow closely the arguments of~\cite{Doumic2007} which corresponds to the case $g_1 \equiv 1$ with a fragmentation kernel and with additional confinement assumption in the drift term which would allow us to work in a compact interval in one of the two dimensions. We generalise this approach here.
\begin{lemma}[Reformulation as a renewal equation]
Any pair $(\lambda,h)$ such that $\lambda > 0$ and $h \in W^{1, \infty}_{\textrm{loc}}(\cx)$ is solution almost everywhere to $\mathcal{Q} h = \lambda h $ and verifies
\begin{equation}
\lim_{t \to + \infty } h(\varphi^t(\bx)) \exp \pars{ -\int_0^t \beta \pars{\varphi^s(\bx)} ds - \lambda t }  = 0
\label{eq:limitcondition}
\end{equation}
if and only if it verifies the renewal formula
\begin{equation}
h(\bx) =\int_0^\infty h(0,z) K_{\lambda}(\bx, z) dz,
\label{eq:renewal}
\end{equation}
where
\begin{equation}
K_\lambda(\bx,z) = C_\bx \int_0^\infty e^{-\lambda t} \ p_\bx(t,z) dt
\label{eq:expLambdaT}
\end{equation}
\label{lemma:renewal}
\end{lemma}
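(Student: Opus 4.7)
The plan is to rewrite the PDE $\mathcal{Q}h = \lambda h$ as an ODE along the characteristic curves $\Gamma_{\bx}$ via the method of characteristics, solve it explicitly with an integrating factor, and then recognise the resulting expression as the renewal formula \eqref{eq:renewal}. The limit condition \eqref{eq:limitcondition} will appear naturally as the boundary condition that selects the ``forward'' solution at $t = +\infty$.

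\textbf{Forward direction.} Assume $h \in W^{1,\infty}_{\textrm{loc}}(\cx)$ satisfies $\mathcal{Q}h = \lambda h$ almost everywhere. Rearranging the definition of $\mathcal{Q}$ in \eqref{eq:Q} yields, for a.e. $\bx$,
\[
g(\bx)^\top \nabla h(\bx) - (\lambda + \beta(\bx)) h(\bx) = -\beta(\bx)\int_0^\infty h(0,z) k(\bx,z)\,dz.
\]
Composing with the flow and using $\frac{d}{dt} h(\varphi^t(\bx)) = g(\varphi^t(\bx))^\top \nabla h(\varphi^t(\bx))$ (justified a.e.\ by Lemma \ref{lemma:flowprop} and the $W^{1,\infty}_{\textrm{loc}}$ regularity along almost every orbit), I multiply by the integrating factor $e^{-\lambda t}\Psi(t\vert\bx):=\exp\bigl(-\lambda t -\int_0^t \beta(\varphi^s(\bx))\,ds\bigr)$ to obtain
\[
\frac{d}{dt}\Bigl[ h(\varphi^t(\bx))\,e^{-\lambda t}\Psi(t\vert\bx) \Bigr]
= -e^{-\lambda t}\,\psi(t\vert\bx)\int_0^\infty h(0,z)\,k(\varphi^t(\bx),z)\,dz,
\]
using $\psi(t\vert\bx)=\beta(\varphi^t(\bx))\Psi(t\vert\bx)$. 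Integrating from $0$ to $+\infty$ and invoking the limit condition \eqref{eq:limitcondition} to kill the boundary term at $+\infty$, I get
\[
h(\bx) = \int_0^\infty \int_0^\infty h(0,z)\,e^{-\lambda t}\,\psi(t\vert\bx)\,k(\varphi^t(\bx),z)\,dt\,dz.
\]
Swapping the two integrals by Fubini (the integrand is non-negative) and recognising $\psi(t\vert\bx)\,k(\varphi^t(\bx),z)=C_\bx\, p_\bx(t,z)$ from Definition \ref{def:Px} gives exactly \eqref{eq:renewal} with the kernel $K_\lambda$ of \eqref{eq:expLambdaT}.

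\textbf{Reverse direction.} Conversely, assume $h$ satisfies \eqref{eq:renewal}. Using the semigroup property $\varphi^{t+s}(\bx) = \varphi^t(\varphi^s(\bx))$ together with the cocycle identity $\Psi(t+s\vert\bx) = \Psi(s\vert\bx)\,\Psi(t\vert\varphi^s(\bx))$, a change of variables $t\mapsto t-s$ in \eqref{eq:renewal} applied at $\varphi^s(\bx)$ yields
\[
h(\varphi^s(\bx))\,e^{-\lambda s}\,\Psi(s\vert\bx)
= \int_s^\infty \int_0^\infty h(0,z)\,e^{-\lambda t}\,\psi(t\vert\bx)\,k(\varphi^t(\bx),z)\,dt\,dz,
\]
which first of all implies \eqref{eq:limitcondition}, since the right-hand side vanishes as $s\to\infty$ by dominated convergence. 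Differentiating this identity with respect to $s$ at $s = 0$ (valid for a.e.\ $\bx$ by the regularity of the flow and of $\beta$ under Assumptions \ref{ass:all}) produces precisely the ODE along characteristics derived above, which is equivalent to $\mathcal{Q}h = \lambda h$ a.e.

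The main obstacle I anticipate is purely technical: justifying the differentiation under the integral, the Fubini step, and the absolute continuity of $t\mapsto h(\varphi^t(\bx))$ along almost every orbit within the $W^{1,\infty}_{\textrm{loc}}$ framework. The bounds in Assumption \ref{ass:all}-(i) on $g$ and its derivatives (which prevent finite-time explosion of the flow) together with the continuity of $\beta$ and $k$ make all these verifications standard, but they must be carried out along orbits $\Gamma_\bx^+$ rather than on the full two-dimensional state space, which is the point where the dimensional reduction to a one-dimensional renewal problem really takes effect.
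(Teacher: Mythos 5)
Your proposal is correct and follows essentially the same route as the paper: multiply the ODE along characteristics by the integrating factor $e^{-\lambda t}\exp(-\int_0^t\beta(\varphi^s(\bx))\,ds)$, integrate to $+\infty$ and use the decay condition to obtain the renewal formula; conversely, shift the starting point along the orbit via the flow's group/cocycle property to get the tail-integral identity, from which the decay condition and (by differentiation) the eigenvalue PDE both follow. The only cosmetic differences are that the paper differentiates the tail identity at a generic time $t$ while you do it at $s=0$, and the paper deduces the decay condition from convergence of the improper integral where you phrase it as dominated convergence of the tail — these are equivalent.
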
	
\begin{remark}Using Definition~\ref{def:Px} we can then write Eq. \eqref{eq:renewal} as
\begin{equation}
h(\bx) = C_\bx\  \mathbb{E}_\bx [h(0,Z) e^{-\lambda T} ] .
\label{eq:renewalEsper}
\end{equation}
\end{remark}
\begin{proof}[Proof of Lemma~\ref{lemma:renewal}. ]  We proceed by the method of integration along characteristics. { First of all, take $h \in W^{1,\infty}_{\textrm{loc}}(\cx)$ and fix some $\bx \in \cx$ and $\lambda \geq 0$. We study $\mathcal{R}^{h,\lambda}_{\bx} : \rr_+ \to \rr$ defined by
\begin{equation*}
\mathcal{R}^{h,\lambda}_{\bx}(t) := h(\varphi^t(\bx)) \exp \pars{ -\int_0^t \beta \pars{\varphi^s(\bx)} ds - \lambda t } , \quad t \geq 0.
\end{equation*}
It is clear that $\mathcal{R}^{h,\lambda}_{\bx}$ is in $L^1_{\textrm{loc}}(\rr_+)$. We show now that it is weakly differentiable. At least formally, we have that
}
\begin{align}
\frac{\partial}{\partial t} & \mathcal{R}^{h,\lambda}_{\bx}(t) \nonumber \\
=& \pars{ \nabla h(\varphi^t(\bx)) ^\top  g(\varphi^t(\bx)) - (\beta(\varphi^t(\bx)) + \lambda )  h(\varphi^t(\bx))  }  \exp \pars{ -\int_0^t \beta \pars{\varphi^s(\bx)} ds - \lambda t} ,
\label{eq:diffR}
\end{align}
{
which is well defined and in $L^1_{\textrm{loc}}(\rr_+)$ since $h \in W^{1,\infty}_{\textrm{loc}}(\cx)$, and $g$ and $\beta$ are also locally bounded from Assumptions~\ref{ass:all}. Therefore $\mathcal{R}^{h,\lambda}_{\bx} \in W^{1,\infty}_{\textrm{loc}}(\rr_+)$. Now, using the definition of $\mathcal{Q}$ we get whenever $h \in D(\mathcal{Q})$,
}
\begin{align*}
\frac{\partial}{\partial t} & \mathcal{R}^{h,\lambda}_{\bx}(t) \nonumber \\
=& \left( \mathcal{Q} h(\varphi^t(\bx)) - \lambda h(\varphi^t(\bx)) - \beta(\varphi^t(\bx)) \int_0^\infty h(0,z) k(\varphi^t(\bx),z) dz   \right) e^{ -\int_0^t \beta \pars{\varphi^s(\bx)} ds - \lambda t}   .
\end{align*}

$ \underline{\bullet \  \mathcal{Q} h = \lambda h \cap \eqref{eq:limitcondition} \implies \eqref{eq:renewal} : }$
Now, suppose that $(\lambda,h)$ is solution a.e. to $\mathcal{Q} h = \lambda h $. { Then, for almost every $t$
\begin{align}
\frac{\partial}{\partial t}  \mathcal{R}^{h,\lambda}_{\bx}(t) &= - \beta(\varphi^t(\bx))  e^{ -\int_0^t \beta \pars{\varphi^s(\bx)} ds - \lambda t}   \int_0^\infty h(0,z) k(\varphi^t(\bx),z) dz    \nonumber \\
&= - C_{\bx} e^{-\lambda t} \int_0^\infty h(0,z) p_{\bx}(t,z) dz,
\label{eq:dthExp}
\end{align}
which is well defined and integrable over $(0,+\infty)$ by Assumptions~\ref{ass:all}, and since the eigenfunction $h$ must be in the domain of the extended generator, so the integral term is well defined.
} Now, suppose that Eq. \eqref{eq:limitcondition} is also verified. Then, integrating Eq. \eqref{eq:dthExp} in $(0,+\infty)$ and using the decay condition Eq. \eqref{eq:limitcondition} results into
\[
h(\bx) = \int_0^{+\infty} C_{\bx} e^{-\lambda t} \int_0^\infty h(0,z) p_{\bx}(t,z) dz \ dt 
\]
which, by Fubini, gives exactly Eq. \eqref{eq:renewal}.

$ \underline{\bullet \ \eqref{eq:renewal} \implies \mathcal{Q} h = \lambda h \cap \eqref{eq:limitcondition} : }$ Finally, suppose that we have Eq. \eqref{eq:renewal}. Then we have:
\begin{align*}
h & (\varphi^t(\bx)) =  \int_0^\infty h(0,z) K_{\lambda}(\varphi^t(\bx), z) dz \\
=& \int_0^{\infty} \int_0^\infty h(0,z)  k(\varphi^{t+s}(\bx),z)  \beta(\varphi^{t+s}(\bx))    \exp \pars{ -\int_t^{t+s} \beta \pars{\varphi^u(\bx)} du - \lambda s}  ds dz \\
=& \left( \int_0^{\infty} \int_0^\infty h(0,z)  k(\varphi^{s}(\bx),z)  \beta(\varphi^{s}(\bx))    \exp \pars{ -\int_0^{s} \beta \pars{\varphi^u(\bx)} du - \lambda s}  ds dz \right. \\
& - \left. \int_0^{\infty} \int_0^t h(0,z)  k(\varphi^{s}(\bx),z)  \beta(\varphi^{s}(\bx))    \exp \pars{ -\int_0^{s} \beta \pars{\varphi^u(\bx)} du - \lambda s}  ds dz \right)   \\
& \times \exp \pars{ \int_0^{t} \beta \pars{\varphi^u(\bx)} du + \lambda t} 
\end{align*}
Therefore, using Eq. \eqref{eq:renewal} { again to replace the double integrals of the RHS we obtain}:
\begin{equation}
\begin{split}
{ \mathcal{R}^{h,\lambda}_{\bx}(t) = } \ h(\varphi^t(\bx)) & \exp \pars{ -\int_0^t \beta \pars{\varphi^s(\bx)} ds - \lambda t }  \\ & = h(\bx) -  \int_0^{\infty} \int_0^t h(0,z)  k(\varphi^{s}(\bx),z)  \psi(s\vert \bx) e^{-\lambda s}  ds dz \label{eq:hflux}
\end{split}
\end{equation}
As $t \to + \infty$, the improper integral in the RHS of Eq. \eqref{eq:hflux} converges towards
\[
\lim_{t \to +\infty } \int_0^{\infty} \int_0^t h(0,z)  k(\varphi^{s}(\bx),z)  \psi(s\vert \bx) e^{-\lambda s}  ds dz = \int_0^\infty h(0,z) K_\lambda (\bx, z) dz = h(\bx),
\]
from which we obtain Eq. \eqref{eq:limitcondition}. Moreover, supposing that $h \in W^{1,\infty}_{\textrm{loc}}(\cx)$, { from the previous analysis, we have that $\mathcal{R}^{h,\lambda}_{\bx} \in W^{1,\infty}_{\textrm{loc}}(\rr_+)$}, so by differentiation of Eq. \eqref{eq:hflux} we obtain almost everywhere,
\begin{align*}
 \frac{\partial}{\partial t} \mathcal{R}^{h,\lambda}_{\bx}(t) =  -  \int_0^{\infty}  h(0,z)  k(\varphi^{t}(\bx),z)  \psi(t\vert \bx) e^{-\lambda t}  dz
\end{align*}
Hence, a comparison with Eq. \eqref{eq:diffR} gives that $h \in D(\mathcal{Q})$, and for all $\bx \in \cx$ and $t > 0$ we have almost everywhere $ \mathcal{Q} h(\varphi^t(\bx)) - \lambda h(\varphi^t(\bx)) = 0$, or equivalently, $Q h = \lambda h $ almost everywhere in $\cx$.
\end{proof}
\begin{remark}
In particular the function $\eta(y) := h(0,y)$ defined for all $y\geq 0$ is solution to the fixed point problem
\begin{equation}
\eta(y) = \int_0^\infty \eta(z) K_\lambda(0,y,z) dz .
\label{eq:renewal0}
\end{equation}
\end{remark}
Therefore we will consider the operator $G_{\lambda}$ defined for $f \in C^1(\rr_+)$ by
\begin{equation}
\mathcal{G}_{\lambda} f(y) = \int_0^\infty f(z) K_\lambda(0,y,z) dz \quad \forall y > 0.
\label{eq:Glambda}
\end{equation}
We also introduce the operator $\mathcal{J}_\lambda : \mathcal{M}(\rr_+) \to \mathcal{M}(\rr_+)$ which for any Radon measure $\nu$ supported in $\rr_+$ gives
\begin{equation}
\mathcal{J}_{\lambda} \nu = \pars{\int_0^\infty K_\lambda(0,z,y) \nu(dz) } dy
\end{equation}
and verifies the duality property below:
\begin{proposition} For every $\lambda \geq 0$, $\mathcal{J}_\lambda$ is the adjoint operator of $\mathcal{G}_\lambda$.
\end{proposition}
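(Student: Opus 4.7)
The plan is to verify the adjoint relation directly from the definitions, by writing out both sides of the duality pairing and applying Fubini's theorem. The natural pairing here is, for $f$ continuous and bounded (say) and $\nu \in \mathcal{M}(\rr_+)$, $\ap{f}{\nu} = \int_0^\infty f(y)\, \nu(dy)$, and the claim to check is $\ap{\mathcal{G}_\lambda f}{\nu} = \ap{f}{\mathcal{J}_\lambda \nu}$.

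First, I would unfold the left-hand side using the definition of $\mathcal{G}_\lambda$:
\[
\ap{\mathcal{G}_\lambda f}{\nu} = \int_0^\infty \pars{\int_0^\infty f(z) K_\lambda(0,y,z)\, dz} \nu(dy).
\]
Since $K_\lambda \geq 0$ by Eq.~\eqref{eq:expLambdaT} (the integrand $e^{-\lambda t} p_{\bx}(t,z)$ is non-negative), Tonelli's theorem applies to non-negative integrands, and in full generality for signed/absolutely integrable $f$ one applies Fubini under the mild integrability assumption $\int \int |f(z)| K_\lambda(0,y,z)\, dz\, |\nu|(dy) < \infty$, which is guaranteed for instance when $f$ is bounded and $\nu$ is finite (or more generally under the growth controls inherited from Assumptions~\ref{ass:all}). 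Swapping the order of integration yields
\[
\ap{\mathcal{G}_\lambda f}{\nu} = \int_0^\infty f(z) \pars{\int_0^\infty K_\lambda(0,y,z)\, \nu(dy)}\, dz.
\]

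Then I would compare with the right-hand side: by the definition of $\mathcal{J}_\lambda$,
\[
\ap{f}{\mathcal{J}_\lambda \nu} = \int_0^\infty f(y) \pars{\int_0^\infty K_\lambda(0,z,y)\, \nu(dz)}\, dy,
\]
and a simple relabelling of the dummy variables ($y \leftrightarrow z$ in the inner integral) shows this equals the previous expression. Hence $\ap{\mathcal{G}_\lambda f}{\nu} = \ap{f}{\mathcal{J}_\lambda \nu}$ for all admissible $f$ and $\nu$, which is precisely the statement that $\mathcal{J}_\lambda$ is the adjoint of $\mathcal{G}_\lambda$.

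There is no real obstacle here beyond bookkeeping: the only thing to be careful about is specifying the functional framework in which the pairing makes sense and verifying the hypotheses of Fubini. Since $K_\lambda$ is non-negative, the positivity permits Tonelli's theorem unconditionally, and then one transfers the identity to signed integrands by the usual decomposition. I would therefore state the result for $f \in C_b(\rr_+)$ (or $L^\infty$) and $\nu$ a finite Radon measure, noting that the density interpretation of $\mathcal{J}_\lambda\nu$ against Lebesgue measure is exactly what makes both sides integrals against $dz$ on $\rr_+$.
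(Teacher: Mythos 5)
Your proof is correct and follows essentially the same route as the paper: expand both sides of the duality pairing using the definitions of $\mathcal{G}_\lambda$ and $\mathcal{J}_\lambda$, invoke Fubini to swap the order of integration, and relabel the dummy variable. The extra care you take (Tonelli for the non-negative kernel, decomposition for signed integrands, and pinning down the function/measure classes) is reasonable bookkeeping that the paper leaves implicit, but it does not change the substance of the argument.
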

\begin{proof}
Let $f \in C(\rr_+^2)$ and $\nu \in \mathcal{M}(\rr_+^2)$. By Fubini's Theorem,
\begin{align*}
\ap{\nu}{\mathcal{G}_\lambda f} &= \int_0^\infty \pars{\int_0^\infty f(z) K_\lambda(0,y,z) dz } \nu(dy) \\
&=  \int_0^\infty f(z) \pars{\int_0^\infty K_\lambda(0,y,z) \nu(dy) } dz = \ap{\mathcal{J}_\lambda \nu}{f}.
\end{align*}
\end{proof}
\begin{remark} From Eq. \eqref{eq:renewalEsper}, we can write
\[
\mathcal{G}_{\lambda} f(y) = C_{(0,y)} \ \mathbb{E}_{(0,y)} [f(Z) e^{-\lambda T} ] .
\]
where again $C_{(0,y)} =  \norm{K_0(0,y,\cdot)}_{1}$ is the mean number of offspring produced by an individual of initial size $y$ after its first jump.
\end{remark}	
\begin{proof}[Proof of Proposition~\ref{prop:eigenelements}]
We aim to prove that there is a unique $\lambda > 0$ for which the operator $\mathcal{G}_{\lambda}$ admits a unique fixed point $h(0,\cdot)$. The pair $(\lambda, h)$ is then solution to the eigenproblem $\mathcal{Q} h = \lambda h $. This will be proven by means of Krein-Rutman's theorem. In order to be able to apply this theorem we need to work with a strictly positive compact operator. For the compactly supported mutational kernel it is immediately the case, however it is not the case for $G_\lambda$ with a fragmentation kernel. Thus, we shall follow a standard approximation scheme for the proof which is structured as follows:
\begin{enumerate}
\item We define a truncated version of $\mathcal{G}_{\lambda}$ which by Arzéla-Ascoli's theorem we prove to be a positive compact operator in the Banach space of continuous functions.
\item We apply Krein-Rutman theorem to prove that for each $\lambda \geq 0$ the truncated operator admits a unique eigenvalue $\mu_\lambda \geq 0$ and suitably normalised eigenfunction $h_\lambda \geq 0$. 
\item We prove that there exists a unique $\lambda_0 > 0$ such that $\mu_{\lambda_0} = 1$
\item We prove that the value of $\lambda_0$ is uniformly bounded for all the members of the family of truncated operators.
\item We pass to the limit and show that the limit eigenelements $(\lambda_0, h_{\lambda_0})$ of the family of truncated operators are indeed solution to the fixed point problem.
\end{enumerate}
Note that the proof is also valid for the compact mutational kernel which verifies Assumption~\ref{ass:all}-(iii)-(b), but in that case neither the truncation nor the uniform estimates are needed. 		
\begin{enumerate}[wide, label=\textbf{Step\,\arabic* :},labelindent=0pt]
\item \textbf{Construction of the truncated operator.} 
\newline
For each $R>0$ let 
$\mathcal{G}_{\lambda}^{R} : C^1([0,R]) \to C^1([0,R])$ defined for all $\lambda > 0$, for $f \in C^1([0,R])$ by
\begin{equation}
\mathcal{G}_{\lambda}^{R} f(y) = \int_0^R f(z) K_\lambda^{R}(0,y, z) dz  \quad \forall y \in (0,R)
\label{eq:def_truncation}
\end{equation}
with
\[
K_\lambda^{R}(0,y, z)  =  \int_0^\infty \pars{ k (\varphi^t (0,y),z ) + \frac{1}{R} \int_R^{\infty} k (\varphi^t (0,y),\zeta) d\zeta  } \psi(t\vert (0,y)) e^{- \lambda t} dt
\]
We require to add the uniform correction $z \mapsto \frac{1}{R} \int_R^{\infty} k (\varphi^t (0,y),\zeta) d\zeta$ in order to endorse the strict positivity of the operator. Indeed, for all $y \in [0,R]$, from Fubini's theorem, Assumption~\ref{ass:all}-(iii) and Jensen's inequality we obtain
\begin{align*}
\int_0^R K_\lambda^{R}(0,y, z)   dz &= \int_0^\infty \pars{ \int_0^\infty k (\varphi^t (0,y),z ) dz  } \psi(t\vert (0,y)) e^{- \lambda t} dt \\
&> \int_0^\infty e^{-\lambda t} \psi(t \vert  (0,y)) dt \\
&\geq \exp \pars{-\lambda \esper{(0,y)}{T}} .
\end{align*}
Moreover, Assumption 3.3-(ii) gives that
\[
0 < \espero{ \phi^{-1}_{(0,y)} \pars{{\mathbf{A}}_- , Y_{(0,y)}({\mathbf{A}}_- ) } }  	\leq \esper{(0,y)}{T} \leq \espero{ \phi^{-1}_{(0,y)} \pars{{\mathbf{A}}_+ , Y_{(0,y)}({\mathbf{A}}_+ ) } } < +\infty
\]
where ${\mathbf{A}}_-$ (respectively ${\mathbf{A}}_+$) follows an Exponential distribution of parameter $\beta_-$ (respectively $\beta_+$). Therefore for all positive $f \in C^1([0,R])$, $G_\lambda^R f > 0$.
\newline
Moreover, if in analogy with Definition~\ref{def:Px}, we define for all $R>0$ the random couple $(T_R,Z_R) \in \rr_+ \times [0,R]$ such that under $\mathbb{P}_{(0,y)}$ they have joint probability density
\[
p_{(0,y)}^{R}(t,z) = \frac{1}{ C_{(0,y)}} \pars{ k (\varphi^t (0,y),z ) + \frac{\mathds{1}_{z \leq R}}{R} \int_R^{\infty} k (\varphi^t (0,y),\zeta) d\zeta    } \psi(t\vert  (0,y)) ,
\] 
then we can write
\begin{equation}
\mathcal{G}_{\lambda}^{R} f(y) = C_{(0,y)}  \  \mathbb{E}_{(0,y)} [f(Z_R) e^{-\lambda T_R} \mathds{1}_{Z_R \leq R}].
\label{eq:G_esper}
\end{equation}
\newline
\item \textbf{Existence of the eigenelements of $\mathcal{G}_{\lambda}^{R}$.}
\newline
We begin by proving that for all $\varepsilon > 0, \lambda \geq 0$ and $R>0$, $\mathcal{G}_{\lambda}^{R}$ is compact. We show that for every sequence $(f_n)_n$ in the unit ball of $C([0,R])$ there exists a subsequence of $\pars{\mathcal{G}_{\lambda}^{R} f_n}_n$ which converges in $C([0,R])$ equipped with the uniform norm.
\begin{enumerate}[i.]
\item \textbf{Uniform bound: }	For all $y \in (0,R)$, $f$ in the unit ball of $C[0,R]$ we have from Eq. \eqref{eq:G_esper}:
\begin{align*}
\mathcal{G}_{\lambda}^{R} f (y)  &\leq C_{(0,y)} \norm{f}_{\infty} \leq  \bar K
\end{align*}
\item \textbf{Equicontinuity: } Since $g_1 \in C^1(\rr_+^2)$ and is strictly positive, and $k$ is continuous in the first two variables, we have that for every $\lambda \geq 0$, $(y,z) \in [0,R] \times [0,R] \mapsto K_{\lambda}(0,y,z)$ is an uniformly continuous function on $[0,R] \times [0,R]$. Therefore for all $\lambda \geq 0$ and $\varepsilon > 0$ there exists $\delta > 0$ such that if $\vert y_1 - y_2\vert  + \vert z_1 - z_2\vert  < \delta$ for $y_1,y_2,z_1,z_2 \in [0,R]$, then $\abs{K_\lambda(0,y_1,z_1) - K_\lambda(0,y_2,z_2) } < \varepsilon/R$.  Hence, for all $f$ in the unit ball, $y_1, y_2 \in [0,R]$ such that $\vert y_1 - y_2\vert  < \delta$ we have:
\begin{align*}
\abs{ \mathcal{G}_{\lambda}^{R} f (y_1) - \mathcal{G}_{\lambda}^{R} f (y_2) }
&\leq \int_0^R  \vert f(z) \vert  \vert  K_{\lambda}(0,y_1,z) - K_{\lambda}(0,y_2,z)\vert   dz   < \varepsilon
\end{align*}
independently on $y_1, y_2$.
\end{enumerate}
Finally, by Ascoli's criterium, there exists a convergent subsequence of $\pars{\mathcal{G}_{\lambda}^{R} f_n}_n$ and so the operator $\mathcal{G}_{\lambda}^{R}$ is strictly positive and compact for the uniform topology of $C([0,R])$.		
Therefore, by Krein-Rutman theorem~\cite{Perthame2007} there exists a unique triplet of a positive real value $\mu_{\lambda}^{R} > 0$, function $\eta_{\lambda}^{R} > 0$ continuous on $[0,R]$, and a positive Radon measure $\nu_{\lambda}^{R}$ supported on $[0,R]$ such that 
\begin{align}
\mathcal{G}_{\lambda}^{R} \eta_{\lambda}^{R}  &= \mu_{\lambda}^{R} \eta_{\lambda}^{R} 
\label{eq:KR_h}\\
\mathcal{J}_{\lambda}^{R} \nu_{\lambda}^{R}  &= \mu_{\lambda}^{R} \nu_{\lambda}^{R}
\label{eq:KR_nu} \ , \ \nu_{\lambda}^{R}([0,R]) = 1 \\
\ap{\nu_{\lambda}^{R}}{\eta_{\lambda}^{R}}_R &= 1, 
\label{eq:KR_normalisation}
\end{align}
where we denote $
\ap{\nu}{f}_R = \int_0^R f(y) \nu(dy) $.
\newline
\item \textbf{Existence and uniqueness of $\lambda_0 > 0$ such that $\mu_{\lambda_0}^{R} = 1$}
\newline
We show that the mapping $\lambda \mapsto \mu_{\lambda}^{R}$ is a continuous strictly decreasing function which goes through the value of 1 at some point.
First, note that from Equations \eqref{eq:KR_h} and \eqref{eq:KR_normalisation}, we have
\begin{align}
\ap{\nu_{\lambda}^{R}}{\mathcal{G}_{\lambda}^{R} \eta_{\lambda}^{R}}_R = \mu_{\lambda}^{R}
\label{eq:normCondition}
\end{align}
We prove that $\lambda \mapsto \ap{\nu_{\lambda}^{R}}{\mathcal{G}_{\lambda}^{R} \eta_{\lambda}^{R}}_R$ is differentiable continuous and decreasing. Let us consider the derivatives in the sense of distributions $\partial_\lambda \nu_\lambda^{R}$ and $\partial_\lambda \eta_\lambda^R$. We show below that $\lambda \mapsto \ap{\nu_{\lambda}^{R}}{\mathcal{G}_{\lambda}^{R} \eta_{\lambda}^{R}}_R$ is actually strongly differentiable with respect to $\lambda$ as it has the same regularity as $\lambda \mapsto \mathcal{G}_\lambda^{R} f$. First, by dominated convergence, differentiating under the integral sign on Eq. \eqref{eq:G_esper} gives for every $f \in C^1([0,R])$,
\begin{equation}
\pars{ \partial_\lambda \mathcal{G}_{\lambda}^{R}} f(y) = - C_{(0,y)}  \  \mathbb{E}_{(0,y)} [f(Z_R) T_R e^{-\lambda T_R} \mathds{1}_{Z_R \leq R}].
\label{eq:dG_esper}
\end{equation}
Then, by differentiating under the duality brackets, and using the duality between $\mathcal{G}$ and $\mathcal{J}$ with \eqref{eq:KR_h} and \eqref{eq:KR_nu}, we obtain
\begin{align*}
\partial_\lambda 	\mu_{\lambda}^{R} &= \ap{\partial_\lambda  \nu_{\lambda}^{R}}{\mathcal{G}_{\lambda}^{R} \eta_{\lambda}^{R}}   + \ap{\nu_{\lambda}^{R}}{\mathcal{G}_{\lambda}^{R} \pars{ \partial_\lambda  \eta_{\lambda}^{R}}} + \ap{\nu_{\lambda}^{R}}{\pars{ \partial_\lambda  \mathcal{G}_{\lambda}^{R} } \eta_{\lambda}^{R}}  \\
&= \ap{\partial_\lambda  \nu_{\lambda}^{R}}{\mathcal{G}_{\lambda}^{R} \eta_{\lambda}^{R}}  + \ap{\mathcal{J}_{\lambda}^{R}  \nu_{\lambda}^{R}}{ \partial_\lambda  \eta_{\lambda}^{R}} + \ap{\nu_{\lambda}^{R}}{\pars{ \partial_\lambda  \mathcal{G}_{\lambda}^{R} } \eta_{\lambda}^{R}}  \\
&= \mu_{\lambda}^{R} \pars {\ap{\partial_\lambda  \nu_{\lambda}^{R}}{ \eta_{\lambda}^{R}}  + \ap{ \nu_{\lambda}^{R}}{ \partial_\lambda  \eta_{\lambda}^{R}}} + \ap{\nu_{\lambda}^{R}}{\pars{ \partial_\lambda  \mathcal{G}_{\lambda}^{R} } \eta_{\lambda}^{R}} \\
&= \mu_{\lambda}^{R} \partial_\lambda \ap{ \nu_{\lambda}^{R}}{ \eta_{\lambda}^{R}} + \ap{\nu_{\lambda}^{R}}{\pars{ \partial_\lambda  \mathcal{G}_{\lambda}^{R} } \eta_{\lambda}^{R}}
\end{align*}
Eq. \eqref{eq:KR_normalisation} gives $\partial_\lambda \ap{ \nu_{\lambda}^{R}}{ \eta_{\lambda}^{R}}  = 0$, and therefore $\partial_\lambda 	\mu_{\lambda}^{R} = \ap{\nu_{\lambda}^{R}}{\pars{ \partial_\lambda  \mathcal{G}_{\lambda}^{R} } \eta_{\lambda}^{R}}$, i.e.,
\begin{align}
\partial_{\lambda} \mu_{\lambda }^{R} = -  \int_0^R C_{(0,y)} \ \mathbb{E}_{(0,y)} \sqkets{\eta_{\lambda}^{R} (Z_R) T_R e^{-\lambda T_R} \mathds{1}_{Z_R \leq R} } \nu_{\lambda}^{R}(dy)
\end{align}
Since all the integrands are non-negative we have $\partial_\lambda 	\mu_{\lambda}^{R} < 0$. So $\lambda \mapsto \mu_{\lambda}^{R}$ is a continuous strictly-decreasing function. 
Moreover, doing $\lambda = 0$, integrating Eq. \eqref{eq:KR_nu}, using Fubini's theorem to integrate first in the $z$ variable, and using Assumption~\ref{ass:all}-(iii), we obtain
\begin{align*}
\mu_0^{R} &= \int_0^R \mathcal{J}_{0}^{R} \nu_0^{R}(dz)  \\
&= \int_0^R \int_0^R \int_0^{\infty} \pars{ k (\varphi^t (0,y),z ) + \frac{\int_R^{\infty} k (\varphi^t (0,y),\zeta) d\zeta}{R}  } \psi(t\vert (0,y)) dt \  \nu_{0}^{R}(dy) \ dz \\
&=  \int_0^R \int_0^{\infty} \pars{ \int_0^\infty k (\varphi^t (0,y),z )  dz } \psi(t\vert (0,y)) dt \  \nu_{0}^{R}(dy) > 1
\end{align*}
On the other hand, doing $\lambda \to \infty$, passing to the limit under the expecation of Eq. \eqref{eq:G_esper} we get for every $f \in C([0,R])$, $\mathcal{G}_{\lambda}^{R}f  \to 0$ uniformly as $\lambda \to \infty$. In particular, by the equicontinuity of $\mathcal{G}_{\lambda}^{R}$, for every $\delta \in (0,2)$, there must be $\lambda_*$ large enough such that for every $f \in C([0,R])$, $\mathcal{G}_{\lambda}^{R}f  \leq \delta$ for all $\lambda \geq \lambda_*$ and hereby, $\mu_{\lambda}^{R} \leq \delta$ for all $\lambda \geq \lambda_*$. Therefore $\mu_{\lambda}^{R} \to 0$ as $\lambda \to \infty$.
In consequence, there must be a unique $\lambda_0 > 0$ such that $\mu_{\lambda_0}^{R} = 1$. 
We then define $\lambda_{R}$ as the only $\lambda_0 > 0$ such that $\mu_{\lambda_0}^{R} = 1$ and denote $\eta_{R} = \eta^{R}_{\lambda_{R}}$ the respective eigenfunction. Next, we construct a sequence of $h_{R}$ from $\eta_{R}$ which are to converge to the solution of the intial eigenproblem and we show that we can establish an uniform bound over $\lambda_{R}$. 
\newline
\item \textbf{Construction of $h_{R}$.}
\newline
We extend the definition of $K_\lambda$ to all $(a,y) \in \cx, z \in [0,R]$. Define
\[
K_\lambda^{R}(a,y,z)  := \int_0^\infty \pars{ k (\varphi^t (a,y),z ) + \frac{\int_R^\infty k(\varphi^t(0,y),\zeta) d \zeta }{R} } \psi(t\vert (a,y)) e^{- \lambda t} dt,
\] 
and let 
\begin{equation}
h_{R}(a,y) := \int_0^R \eta_{R}(z) K_{\lambda_{R}}^{R}(a,y,z) dz.
\label{eq:h_epsR}
\end{equation}
Hence, taking $a=0$, since $\eta_{R}$ solves Eq. \eqref{eq:KR_h} for $\mu_\lambda^{R} = 1$, we have that:
\[
h_{R}(0,y) = \int_0^R \eta_{R}(z) K_{\lambda_{R}}^{R}(0,y,z) dz = \mathcal{G}_{\lambda_{R}}^{R} \eta_{R}(y) = \eta_{R}(y),
\] 
and therefore $h_R$ verifies
\begin{equation}
\begin{cases}
h_{R}(\bx) = \int_0^R h_{R}(0,z) K_{\lambda_{R}}^{R}(\bx,z) dz = C_{\bx}  \ \mathbb{E}_{\bx} \sqkets{\eta_{R}(Z_R) e^{-\lambda_{R} T_R} \mathds{1}_{Z_R \leq R} } \quad \forall \bx \in \cx \\
h_{R}(0,y) =\eta_{R}(y) \quad \forall y \in (0,R)
\end{cases}
\label{eq:renewal_truncated}
\end{equation}
where $C_\bx = \norm{K_0(\bx, \cdot)}_{L_1(\rr_+)}$.
Then, we can repeat the steps of the proof of Lemma~\ref{lemma:renewal} to show that the truncated renewal equation \eqref{eq:renewal_truncated} (which is the truncated version of Eq. \eqref{eq:renewal}) is equivalent to have the boundary condition
\begin{equation}
\lim_{t \to + \infty } h_R(\varphi^t(\bx)) \exp\pars{ - \int_0^t \beta(\varphi^s(\bx)) ds - \lambda_R t } = 0
\label{eq:truncatedBoundary}
\end{equation}
and to have that $h_{R}$ is solution to the truncated eigenvalue problem
\[
\mathcal{Q}_{R} h_{R}(a,y) = \lambda_{R} \  h_{R}(a,y) 
\]
where
\begin{align*}
\mathcal{Q}_{R} h(a,y) &= g(a,y)^\top \nabla h(a,y) \\
& \ + \beta (a,y)  \pars{ \int_0^R    h(0,z)  \pars{ k(a,y,z) + \frac{\int_R^\infty k(a,y,\zeta) d\zeta }{R}}  dz -  h(a,y)}. 
\end{align*}
Hence, developing $\mathcal{Q}_{R} h_{R}(0,y)$ one obtains 
\begin{align*}
\mathcal{Q}_{R} h_{R}(0,y) =& g_1(0,y) \partial_a h_{R}(0,y) + g_2(0,y) \partial_y h_{R}(0,y) \\
& + \beta(0,y)  \pars{ \int_0^R    h_{R}(0,z)  \pars{ k(a,y,z) + \frac{\int_R^\infty k(a,y,\zeta) d\zeta }{R} }  dz -  h_{R}(a,y)}.
\end{align*}
Therefore $\eta_{R} = h_{R}(0,\cdot)$ is solution to
\begin{align}
\lambda_{R} & \eta_{R}(y)  = \ g_2(0,y) \eta_{R}'(y) 
- \beta(0,y)  \eta_{R}(y)  \nonumber \\
&+ \beta (0,y)   \int_0^R    \eta_{R}(z)  \pars{ k(0,y,z) + \frac{\int_R^\infty k(a,y,\zeta) d\zeta }{R} + g_1(0,y) \partial_a K_{\lambda_{R}}^{R}(0,y,z) }  dz 
\label{eq:transportEq}
\end{align}
In our case, Assumption~\ref{ass:all}-(ii) which imposes $\beta(0,y) = 0$ for every initial size $y$ simplifies this last equation into
\[
g_2(0,y) \eta_{R}'(y) 	- \lambda_{R} \ \eta_{R}(y) = 0
\] 
Therefore for all $R > 1$, if we impose the normalisation condition $\eta_{R}(1) = 1$, we have
\begin{equation}
\eta_{R}(y) = \exp \pars{ \lambda_{R} \int_1^y \frac{1}{g_2(0,z)} dz }  , \ y \in [0,R]
\label{eq:h_charac}
\end{equation}
Finally, coming back to \eqref{eq:G_esper} and \eqref{eq:KR_h}, we have for all $y \in (0,R)$,
\begin{align*}
\eta_{R}(y) &= C_{(0,y)}  \  \mathbb{E}_{(0,y)} [\eta_R(Z_R) e^{-\lambda T_R} \mathds{1}_{Z_R \leq R}] \\
\iff 1 &= C_{(0,y)}  \  \mathbb{E}_{(0,y)} \sqkets{\frac{\eta_R(Z_R)}{\eta_R(y)} e^{-\lambda T_R} \mathds{1}_{Z_R \leq R}} \\
\iff 1 &= C_{(0,y)}  \  \mathbb{E}_{(0,y)}  \sqkets{\exp \pars{ \lambda_R \pars {\int_y^{Z_R} \frac{1}{g_2(0,z)} dz - T_R } }}
\end{align*}
In particular the last equation characterises $\lambda_{R}$ as the unique $\lambda > 0$ such that for all $y \in (0,R)$, the following Euler-Lotka-type equation is verified
\begin{equation}
1 = C_{(0,y)}   \mathbb{E}_{(0,y)}\sqkets{\exp \pars{ \lambda \pars {\int_y^{Z_R} \frac{1}{g_2(0,z)} dz - T_R } }}.
\label{eq:EulerLotka}
\end{equation}
\newline
\item \textbf{Uniform bound for $\lambda_{R}$ } (Fragmentation case)
\newline
Suppose that for all $a\geq 0$, $\textrm{supp } k(a,y,\cdot) \subseteq (0,y)$. This is, the newborns sizes are almost surely smaller than the parent size. Hence, for all initial size $y$ we have
\begin{equation}
\mathbb{P}_{(0,y)} \pars{ T_R > \int_y^{Z_R} \frac{1}{g_2(A_{0,y}(z),z)} dz } = 1.
\end{equation}
Indeed, from Lemma~\ref{lemma:flowprop}-(4.) we have that $\phi^{-1}_{0,y}(A_{0,y}(z),z) = \int_y^{z} \frac{1}{g_2(A_{0,y}(z),z)} dz$ is the time needed to go from size $y$ to $z$ following the deterministic flow only, and it has to be smaller than the division time at which the trajectory jumps to $z$. Then, thanks to Assumption~\ref{ass:all}-(i) which gives $g_2(0,y) \geq g_2(a,y)$, we have also that
\[
\mathbb{P}_{(0,y)} \pars{ T_R > \int_y^{Z_R} \frac{1}{g_2(0,z)} dz } = 1.
\] 
Therefore for all $\lambda > 0$
\begin{equation}
\exp \pars{ \lambda \pars {\int_y^{Z_R} \frac{1}{g_2(0,z)} dz - T_R } } \leq 1 \in L^1(\rr_+^2, p_{(0,y)} dt dz ),\quad  \mathbb{P}_{(0,y)}\textrm{-a.s.},
\label{eq:Step5_asBound}
\end{equation}
and by dominated convergence if $\lambda_{R}$ converges to $+\infty$ as $R \to \infty$, then 
\[
\mathbb{E}_{(0,y)} \sqkets{\exp \pars{ \lambda_{R} \pars {\int_y^{Z_R} \frac{1}{g_2(0,z)} dz - T_R } }} \to 0 
\]
which contradicts Eq. \eqref{eq:EulerLotka}.  So there must exist $\bar \Lambda > 0$ such that for all $R > 1$, $\lambda_{R} < \bar \Lambda $. 
Moreover, analogous to Step~3, if we differentiate Eq. \eqref{eq:normCondition} in the sense of distributions with respect to $R$, we obtain 
\begin{align*}
\partial_R 	\mu_{\lambda}^{R} &=  \ap{\nu_{\lambda}^{R}}{\pars{ \partial_R  \mathcal{G}_{\lambda}^{R} } \eta_{\lambda}^{R}}.
\end{align*}
Again, the definition $\mathcal{G}_{\lambda}^{R}$ gives us that this derivative can be computed in the strong sense. Indeed, for any positive continuous function $f: [0,R] \to \rr_+$ we have
\begin{align*}
\partial_R & \mathcal{G}_{\lambda}^{R} f(y) \\
=& \frac{\partial}{\partial R} \int_0^R f(z) \int_0^\infty \pars{ k (\varphi^t (0,y),z ) + \frac{\int_R^\infty k (\varphi^t (0,y),\zeta ) d\zeta  }{R} } \psi(t\vert (0,y)) e^{- \lambda t} dt dz \\
=& f(R) \pars{ \int_0^\infty \pars{ k (\varphi^t (0,y),R ) + \frac{\int_R^\infty k (\varphi^t (0,y),\zeta ) d\zeta  }{R} } \psi(t\vert (0,y)) e^{- \lambda t} dt }   \\
&- \int_0^R f(z) \int_0^\infty \frac{1}{R} \pars{k (\varphi^t (0,y),R ) + \frac{\int_R^\infty k (\varphi^t (0,y),\zeta ) d\zeta  }{R}} \psi(t\vert (0,y)) e^{- \lambda t } dt dz \\
=&  \pars{\int_0^R \frac{f(R)-f(z)}{R} dz } \\
&\times \pars{ \int_0^\infty \pars{ k (\varphi^t (0,y),R ) + \frac{\int_R^\infty k (\varphi^t (0,y),\zeta ) d\zeta  }{R} } \psi(t\vert (0,y)) e^{- \lambda t} dt },
\end{align*}
which is positive whenever $f$ is an increasing function. Since Eq. \eqref{eq:h_charac} gives that for every fixed $\lambda$, $\eta_\lambda^R(y)$ is increasing in $y$, then $\pars{ \partial_R  \mathcal{G}_{\lambda}^{R} } \eta_{\lambda}^{R} > 0$ and therefore $\partial_R \mu_{\lambda}^{R} > 0$. In particular, the sequence of $\lambda_{R}$, which is defined as the values of $\lambda$ such that $\mu_{\lambda}^{R} = 1$, is then also increasing in $R$. 
\newline
\item \textbf{Identification of the limit}
\newline
Step~5 gives that $\pars{\lambda_{R}}_R$ is an increasing bounded sequence as $R \to \infty$, so with a limit written $\lambda > 0$. 
Moreover, for each $\lambda_{R}$ exists a unique $h_{R}$ associated, defined by Eq. \eqref{eq:h_epsR}. The family of $h_{R}$ is equibounded and equicontinuous thanks to Eq. \eqref{eq:truncatedBoundary}, Eq. \eqref{eq:h_charac} and the bound on $\lambda_{R}$. Note indeed that Eq. \eqref{eq:h_charac} depends on $R$ only through $\lambda_{R}$. We can therefore extract a subsequence converging to some $(\lambda, h)$ as $R \to \infty$.
We must now check that $(\lambda,h)$ is a good pair of eigenelements, which we do by dominated convergence. In Step~4 we have constructed $h_{R}$ such that it is solution to Equations \eqref{eq:renewal_truncated} and \eqref{eq:truncatedBoundary} which we repeat below to justify each limit.
\[
\begin{cases}
h_{R}(\bx) = C_{\bx} \ \mathbb{E}_{\bx} \sqkets{\eta_{R}(Z_R) e^{-\lambda_{R} T_R} \mathds{1}_{Z_R \leq R} } \quad \forall \bx \in \cx \\
h_{R}(0,y) =\eta_{R}(y) \quad \forall y \in (0,R) \\
h_R(\varphi^t(\bx)) \underset{t \to \infty}{\sim} \exp \pars{\int_0^t \beta(\varphi^s(\bx)) ds + \lambda_R t}  
\end{cases}
\]
The normalisation constant $C_\bx$ is already the one required in the limit case. For the expectation term, recalling from Eq. \eqref{eq:h_charac} that
\[
\frac{\eta_R(y_2)}{\eta_R(y_1)} = \exp \pars{\lambda_R \int_{y_1}^{y_2} \frac{1}{g_2(0,z)} dz }
\]
and using Eq. \eqref{eq:Step5_asBound} in Step~5, we deduce that for all $y \in (0,R)$, 
\[
\eta_{R}(Z_R) e^{- \lambda_{R} T_R} \leq \eta_{R}(y)  \quad \mathbb{P}_{(0,y)}\textrm{-a.s.}
\]
Therefore for all $R > 1$,
\begin{align*}
\mathbb{E}_{(a,y)} \sqkets{\eta_{R}(Z_R) e^{-\lambda_{R} T_R} \mathds{1}_{Z_R \leq R} } & \leq  \eta_{R}(y) < +\infty 
\end{align*}
and we can pass to the limit under the expectations and conclude that the limit $h$ and $\lambda$ verify the renewal formula
\[
h(\bx) = C_\bx  \  \mathbb{E}_{\bx} \sqkets{h(0,Z) e^{-\lambda T} } \quad \forall \bx \in \cx 
\]
which is Eq. \eqref{eq:renewalEsper} and is equivalent to Eq. \eqref{eq:renewal}. Thus, by Lemma 4.2 the couple $(\lambda, h)$ is almost everywhere solution to $\mathcal{Q}h = \lambda h $. 
\end{enumerate}
\end{proof}
\begin{remark}
The assumption $\beta(0,\cdot) \equiv 0$ is crucial for the characterisation of $h$ in Step~4 of the proof of Proposition~\ref{prop:eigenelements}. The case $\beta(0,x) > 0$ could possibly be treated, but it would require additional assumptions in order to have $a \mapsto K_\lambda^{R}(a,x,z) \in W^{1,1}_{loc}(\rr_+)$ and to then control the age derivatives of the kernel $K_\lambda^{R}$. Then, Eq. \eqref{eq:transportEq} would be a scalar transport equation for $h_{\lambda}^{R}$, which thereby admits an elliptic maximum principle. 	Nonetheless, the assumption $\beta(0,\cdot) \equiv 0$, while being perfectly biologically meaningful, allows us to avoid this technicalities. 
\end{remark}

\section{Petiteness of compact sets for sampled chains}
\label{sec:Doeblin}
We want to prove the following Doeblin petite-set condition for all the compact sets of $\cx$.	
\begin{proposition}
	Let $P_t$ be the Markov process characterised by the infinitesimal generator $\mathcal{A}$ defined by Eq. \eqref{eq:A}. If Assumptions~\ref{ass:all} are verified, then every compact $\mathscr{K} \subset \rr_+^2$ is a petite-set for some skeleton chain of $P_t$. This is, there is a non-trivial discrete sampling measure $\mu$ over $\rr_+$ and a non-trivial measure $\nu$ over $\rr_+^2$ such that
	\[
	\ap{\mu}{\delta_\bx P_{\cdot} f} = \int_0^\infty P_t f(\mathbf{x}) \mu(dt) \geq \ap{\nu}{f} \quad \forall \bx \in \mathscr{K}
	\]
	\label{lemma:doeblinmin}
\end{proposition}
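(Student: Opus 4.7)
The plan is to exploit the PDMP structure of $P_t$: between jumps it follows $\varphi^t$, and at state-dependent rate $\tilde\beta(\mathbf{x})=\beta(\mathbf{x})\int h(0,z)k(\mathbf{x},z)\,dz/h(\mathbf{x})$ it jumps to $(0,Z)$ with conditional density proportional to $h(0,z)k(\mathbf{x},z)$. Since $h$ is continuous and positive by Proposition \ref{prop:eigenelements}, these $h$-weights contribute only bounded multiplicative factors on compacts, so the lower estimates reduce to analogous ones for $(\beta,k)$. The key observation is that after any jump the process is on the degenerate line $\{0\}\times\rr_+$ irrespective of the starting point; I therefore focus on trajectories with exactly two jumps at times $T_1$ and $T_1+T_2$ followed by no further jumps up to the sampling time $t$, on which the position equals $\varphi^{s}(0,Z_2)$ with $s:=t-T_1-T_2$ and the dependence on $\mathbf{x}\in\mathscr{K}$ has been absorbed in the first two legs.

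For a given compact $\mathscr{K}\subset\cx$ I would choose target windows $[\tau_i^-,\tau_i^+]$, $[z_i^-,z_i^+]$ ($i=1,2$) and a final sampling window $[s^-,s^+]$ such that: (a) for every $\mathbf{x}\in\mathscr{K}$ and every $z_1\in[z_1^-,z_1^+]$ there is $u\in[\tau_1^-,\tau_1^+]$ with $\varphi^u(\mathbf{x})\in D(z_1)$ and $\beta(\varphi^u(\mathbf{x}))>0$; (b) the analogous property holds along the flow starting from $(0,z_1)$ for every $z_1\in[z_1^-,z_1^+]$ and $z_2\in[z_2^-,z_2^+]$. Existence of such windows follows from the continuity of $\varphi$, the minimum-age structure of $\beta$ in Assumption \ref{ass:all}-(ii), and the uniform lower bound on the length of $D(z)$ in Assumption \ref{ass:all}-(iv). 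Combining these with the two-sided bounds on $B$ from Assumption \ref{ass:all}-(ii) and the survival formula \eqref{eq:survProbability}, the joint density of $(T_1,Z_1,T_2,Z_2)$ restricted to the favourable event admits a uniform-in-$\mathbf{x}$ positive lower bound on the product of the target boxes.

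Taking $\mu(dt)$ to be Lebesgue measure on a sufficiently large window and integrating out $(T_1,Z_1,T_2)$, which only contribute a positive constant, one reduces the estimate to a two-dimensional integral of $f(\varphi^s(0,z_2))$ over $(s,z_2)\in[s^-,s^+]\times[z_2^-,z_2^+]$. The change of variables $(s,z)\mapsto\mathbf{y}:=\varphi^s(0,z)$ is a $C^1$ diffeomorphism onto its image by Lemma \ref{lemma:flowprop}, with Jacobian bounded away from $0$ on this compact box by the explicit formula of Lemma \ref{lemma:flowprop}-(2) and the $C^1$ regularity of $g$. This yields
\[
\int_0^\infty P_tf(\mathbf{x})\,\mu(dt)\ \geq\ c\int_{\mathscr{C}}f(\mathbf{y})\,d\mathbf{y}\qquad\forall\,\mathbf{x}\in\mathscr{K},
\]
with $\mathscr{C}\subset\cx$ a two-dimensional region and $c>0$ depending only on $\mathscr{K}$, so that $\nu(d\mathbf{y}):=c\,\mathds{1}_{\mathscr{C}}(\mathbf{y})\,d\mathbf{y}$ is the required non-trivial minorant; a discretization of $\mu$ yields the sampling measure required by the statement of the proposition.

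The main obstacle I expect is the uniformity in $\mathbf{x}\in\mathscr{K}$ of steps (a)--(b), since for a general compact $\mathscr{K}$ the flow curves from different starting points visit different portions of $\cx$ and a single choice of target windows must work for all of them. This is exactly what the two-jump structure unlocks: after the first jump the second leg lies in the one-parameter family $\{\varphi^u(0,z_1)\}_{z_1}$, for which uniform estimates over the compact interval $[z_1^-,z_1^+]$ follow by continuity. The degeneracy of the kernel, which would obstruct a small-set argument, is compensated here by the extra integration over the sampling time $t$ permitted by the petite-set relaxation.
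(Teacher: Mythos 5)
Your proposal is correct in spirit and it does work, but it takes a genuinely different route from the paper's. The paper iterates Duhamel once, keeps the \emph{one}-jump term, changes variables $(s,z)\mapsto\varphi^{t-s}(0,z)$, and then handles the $\mathbf{x}$-uniformity entirely through the discrete sampling measure: it picks $\Delta$ adapted to Assumption~\ref{ass:all}-(iv), and shows that for each $\mathbf{x}\in\mathscr{K}$ and each target point $(a,y)$, at least one lattice time $j\Delta$ with $j\le j^*$ lands the first-jump time $j\Delta-\phi^{-1}_{(0,Y_{(a,y)}(0))}(a,y)$ inside $\mathcal{T}(\mathbf{x},z)$; the compactness of $\mathscr{K}$ bounds $j^*$ and the minorant $\nu$ inherits $\min_{j\le j^*}\mu_j$. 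You instead spend an extra jump: the first jump sends the process onto the degenerate line $\{0\}\times\rr_+$ so that the second leg depends on $\mathbf{x}$ only through the compact interval of possible $Z_1$-values, and the first jump's contribution is \emph{integrated out} to a uniform constant. The trade-off is clear: the paper keeps a shorter Duhamel expansion but has to do the delicate $\Delta$-skeleton accounting, while you pay one more Duhamel iteration but avoid the interplay between the discretisation $\Delta$ and the geometry of $\mathcal{T}(\mathbf{x},z)$. Both end with essentially the same change of variables $(s,z_2)\mapsto\varphi^s(0,z_2)$ and both invoke Assumption~\ref{ass:all}-(iv) and compactness.

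Two points where your sketch should be tightened. First, condition (a)--(b) as stated (``there is $u\in[\tau_1^-,\tau_1^+]$'') is only an existence statement; what you actually use is that $\mathcal{T}(\mathbf{x},z_1)\cap[\tau_1^-,\tau_1^+]$ has Lebesgue measure bounded away from $0$ uniformly over $(\mathbf{x},z_1)\in\mathscr{K}\times[z_1^-,z_1^+]$, which should be justified from the uniform length bound $\delta_-\le\mathrm{diam}\,D(z)\le\delta_+$ and the flow-speed estimates of Assumption~\ref{ass:all}-(i). Second, your claim that the joint density of $(T_1,Z_1,T_2,Z_2)$ is bounded below ``on the product of the target boxes'' is not literally correct: the favourable event is \emph{not} a product set, because $\varphi^{T_1}(\mathbf{x})\in D(Z_1)$ ties the admissible window of $T_1$ to $Z_1$ (and likewise for the second jump). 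The lower bound should instead be stated on this non-product favourable set, whose marginal $(T_1,z_1,T_2)$-slices have positive measure uniformly in $z_2\in[z_2^-,z_2^+]$ and $\mathbf{x}\in\mathscr{K}$; this is exactly what makes the constant after integrating out $(T_1,Z_1,T_2)$ uniform. Finally, a small remark on the Jacobian: you say it is ``bounded away from $0$'' on the compact box, which is what the diffeomorphism property needs; but the estimate $\int f(\varphi^s(0,z))\,ds\,dz\ge c\int_{\mathscr{C}}f$ also uses that $|\det\mathcal{D}\gamma|$ is bounded \emph{above} there (which of course also holds on the compact box by continuity, and is what the paper uses via Eq.~\eqref{eq:boundDet}).
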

Before the proof we will introduce some useful lemmas. First, we recall Duhamel formula \eqref{eq:Duhamel}, which describes the trajectories driven by the semigroup $P_t$ and allows us to extend the definition of the semigroup as the mild solution of an iterative evolution equation. 
\begin{lemma}[Duhamel formula]
	For all $\bx \in \cx$, $f \in C_b^{1,1}(\cx)$, $P_t$ is the mild solution to
	\begin{align}
	P_t f(\mathbf{x}) =& f \pars{ \varphi^{t} (\mathbf{x}) } \Psitx{t}{\bx}  \nonumber \\
	&+ \int_0^t \psi(s\vert \bx) \int_0^\infty P_{t-s} f(0,z)  \frac{h(0,z) k \pars{\varphi^{s} (\mathbf{x}), z}}{\int_0^\infty h(0,z') k \pars{\varphi^{s} (\mathbf{x}), z'  }dz' } dz ds,
	\label{eq:Duhamel}
	\end{align}
\end{lemma}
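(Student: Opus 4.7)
The plan is to derive \eqref{eq:Duhamel} by the standard first-jump decomposition of the PDMP associated with $\mathcal{A}$, together with a uniqueness argument for mild solutions. The form of the generator $\mathcal{A}$ given by \eqref{eq:A} identifies the underlying conservative Markov process as a PDMP that flows along $\varphi^t$ between jumps and at each jump time instantaneously moves from $\bx$ to $(0,Z)$, with $Z$ drawn from the probability kernel $z \mapsto h(0,z) k(\bx,z)/\int_0^\infty h(0,z') k(\bx,z')\, dz'$; the factor $h(0,z)$ in the numerator is the characteristic reweighting introduced by the Doob $h$-transform, and the rate of jumps is the one induced by $\beta$ after the compensating normalisation dictated by $\mathcal{Q}h = \lambda h$.

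Letting $T$ denote the first jump time of $(X_s)_{s \geq 0}$ starting from $\bx$, I would split
\[
P_t f(\bx) = \esper[\bx]{f(X_t) \mathds{1}_{T > t}} + \esper[\bx]{f(X_t) \mathds{1}_{T \leq t}}.
\]
On $\{T > t\}$ the process has not yet jumped, so $X_t = \varphi^t(\bx)$ and the survival probability is exactly $\Psitx{t}{\bx}$, producing the first term of \eqref{eq:Duhamel}. On $\{T = s \leq t\}$, Definition~\ref{def:Px} identifies the density of $T$ as $\psi(\cdot \vert \bx)$ and, conditionally on $T=s$, the post-jump size $Z$ is sampled from the probability kernel above evaluated at $\varphi^s(\bx)$. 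The strong Markov property at the stopping time $T$ then gives
\[
\esper[\bx]{f(X_t) \,\vert\, T = s,\, Z = z} = P_{t-s} f(0,z),
\]
and integrating against the joint law of $(T,Z)$ over $s \in (0,t)$ reconstructs exactly the second term of \eqref{eq:Duhamel}.

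To conclude, I would certify that the right-hand side of \eqref{eq:Duhamel} satisfies Kolmogorov's equation $\partial_t u = \mathcal{A} u$ with $u(0,\cdot) = f$: differentiation at $t=0$ yields $g \cdot \nabla f$ from the flow, the $-\beta f$ contribution from $\partial_t \Psitx{t}{\bx}$, and the jump gain from the boundary term of the time integral at $s=0$, which together recover $\mathcal{A}f$. A Picard-type contraction on short time intervals, in which Assumption~\ref{ass:all}-(ii) provides local bounds on $\beta$ along trajectories, then gives uniqueness of the mild solution and hence the equality \eqref{eq:Duhamel}. The main delicacy in the argument is precisely this last verification: one has to track how the ``bare'' rate $\beta$ in $\psi(s\vert\bx)$ and the normalising factor $\int h(0,z') k(\varphi^s(\bx), z')\, dz'$ combine under the eigen-relation $\mathcal{Q}h = \lambda h$ to reproduce the jump part of $\mathcal{A}$, since the $h$-reweighting mixes what would otherwise be the rate and kernel components of the PDMP.
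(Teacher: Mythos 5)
Your core argument is the same first-jump decomposition that the paper gives: condition on the event that the first jump time $T$ exceeds $t$, identify the first term from the no-jump trajectory, and then use the strong Markov property at $T$ for the remainder. The additional Picard/Kolmogorov verification you propose at the end is not in the paper's proof, but it is exactly where the argument runs into trouble.

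The delicacy you flag in your last sentence is, I believe, a genuine gap and not merely a bookkeeping point. Under the $h$-transformed generator $\mathcal{A}$, the coefficient of $-f(\bx)$ in the jump part is $\beta(\bx)\frac{\int_0^\infty h(0,z)k(\bx,z)\,dz}{h(\bx)}$, not $\beta(\bx)$. Writing $\Lambda(\bx) := \beta(\bx)\frac{\int_0^\infty h(0,z)k(\bx,z)\,dz}{h(\bx)}$, the PDMP associated with $\mathcal{A}$ has jump rate $\Lambda$ and transition kernel $\frac{h(0,z)k(\bx,z)\,dz}{\int_0^\infty h(0,z')k(\bx,z')\,dz'}$. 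Hence the survival probability of the first jump is $\exp\pars{-\int_0^t\Lambda(\varphi^s(\bx))\,ds}$, not $\Psitx{t}{\bx}$, and the marginal density of $T$ is the one built from $\Lambda$, not $\psi(\cdot\vert\bx)$. Using the eigenrelation $\mathcal{Q}h=\lambda h$ one obtains $\int_0^t\Lambda(\varphi^s(\bx))\,ds=\int_0^t\beta(\varphi^s(\bx))\,ds+\lambda t-\log\frac{h(\varphi^t(\bx))}{h(\bx)}$, so the correct first term is $f(\varphi^t(\bx))\,\Psitx{t}{\bx}\,e^{-\lambda t}\frac{h(\varphi^t(\bx))}{h(\bx)}$ and the correct kernel term carries the factor $\frac{e^{-\lambda s}}{h(\bx)}\int_0^\infty h(0,z)P_{t-s}f(0,z)k(\varphi^s(\bx),z)\,dz$. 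One can reach the same conclusion directly from the definition $P_tf=\frac{M_t(hf)}{e^{\lambda t}h}$ together with the Duhamel formula for the non-conservative semigroup $M_t$ (whose jump rate really is $\beta$). The two expressions coincide with the stated \eqref{eq:Duhamel} only when $\int_0^\infty h(0,z)k(\bx,z)\,dz=h(\bx)$ for all $\bx$, i.e. when $g^\top\nabla h=\lambda h$. This special relation happens to hold in the adder application of Section~5 (where $h(a,y)=y$ and $g=(\lambda y,\lambda y)$), but it is not guaranteed by Assumptions~\ref{ass:all} in general.

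Concretely, your line ``the survival probability is exactly $\Psitx{t}{\bx}$'' and your appeal to Definition~\ref{def:Px} for the density of $T$ are the steps that fail: Definition~\ref{def:Px} describes the law of $(T,Z)$ for the original dynamics driven by $\mathcal{Q}$, and the Doob transform rescales not only the offspring kernel but also the jump intensity. If you carry out the Kolmogorov-equation check you sketch, you will find $\partial_t u\big|_{t=0}\neq\mathcal{A}f$ in general (the mismatch is precisely $\beta(\bx)f(\bx)\pars{1-\frac{\int h(0,z)k(\bx,z)\,dz}{h(\bx)}}$). A toy two-point model with an absorbing state and $\int h(0,z)k(\bx,z)\,dz/h(\bx)\neq 1$ already witnesses the discrepancy. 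So either the statement of the lemma should carry the extra factors $e^{-\lambda t}h(\varphi^t(\bx))/h(\bx)$ and $e^{-\lambda s}\int h(0,z')k(\varphi^s(\bx),z')\,dz'/h(\bx)$, or one must record the hypothesis $g^\top\nabla h=\lambda h$ under which the stated form is valid. I would recommend rewriting your first-jump step using $\Lambda$ from the start; the algebraic cancellations with the $h$-weights then occur cleanly and produce the corrected formula.
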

\begin{proof}
	A classical probabilistic proof consists in writing $P_t f(\mathbf{x})$ conditionally to the occurrence of the first jump. It is also possible to prove it by means of a variation of parameters method, as in Corollary 1.7 from~\cite{Engel1999OneparameterSF}, for example. Here we provide the probabilistic proof.
	Let $X$ a Markov process whose law is given by generator $\mathcal{A}$ defined in Eq. \eqref{eq:A}. Recall from definition~\ref{def:Px} the random variables $T$ and $Z$ which represent the time of the first jump and the new size after the first jump. Note however that the transition kernel of the Markovian generator $\mathcal{A}$ has been rescaled, so that the joint law of $(T,Z)$ under $\mathbb{P}_\bx$ is from now on given by the density function
	\[
	p_{\bx}(t,z) = \psi(t\vert \bx) \cdot \frac{\beta(\bx) \frac{h(0,z)}{h(\bx)} k (\varphi^t(\bx),z)  }{\int_0^\infty \beta(\bx) \frac{h(0,z')}{h(\bx)} k (\varphi^t(\bx),z') dz' } = \psi(t\vert \bx) \cdot \frac{h(0,z) k \pars{\varphi^{s} (\mathbf{x}), z}}{\int_0^\infty h(0,z') k \pars{\varphi^{s} (\mathbf{x}), z'  }d z' }
	\]
	where the probability density of the transition $\bx \mapsto (0,z)$ is computed as the ratio between the transition rate of $\bx \mapsto (0,z)$ and the total transition rate, as described by the generator $\mathcal{A}$.
	Hence, by conditioning on $T$ under $\mathbb{P}_\bx$ and using the strong Markov property of $X$, we have:
	\begin{align*}
	P_t f(\mathbf{x}) = \esper{\bx}{f(X_t)} =& 
	\esper{\bx}{f(X_t) \mathds{1}_{T > t}} + \esper{\bx}{f(X_t) \mathds{1}_{T \leq t}} \\
	=& \esper{\bx}{f(X_t) \vert  T > t} \mathbb{P}_\bx(T > t) + \esper{\bx}{ \esper{\bx}{ \left. f \pars{X_t} \right\vert  T} \mathds{1}_{T \leq t} } \\
	=& \esper{\bx}{f(X_t) \vert  T > t} \mathbb{P}_\bx(T > t) + \esper{\bx}{  \esper{(0,Z)}{f(X_{t-T})} \mathds{1}_{T \leq t} }  \\
	=& f \pars{ \varphi^{t} (\mathbf{x}) } \Psitx{t}{\bx}  \\
	&+ \int_0^t \psi(s\vert \bx)  \int_{0}^{\infty} P_{t-s} f(0,z)  \frac{h(0,z) k \pars{\varphi^{s} (\mathbf{x}), z}}{\int_0^\infty h(0,z') k \pars{\varphi^{s} (\mathbf{x}), z'  }dz' } dz ds.
	\end{align*} 
\end{proof}
We can give now the proof of Proposition~\ref{lemma:doeblinmin}:
\begin{proof}[Proof of Proposition~\ref{lemma:doeblinmin}]
	Let $\bx \in \mathscr{K}$ compact such that $\mathscr{K} \subset [\underline a,\bar a] \times [\underline y, \bar y]$. We iterate once Duhamel's formula \eqref{eq:Duhamel}, using the positivity of $P_t$:
	\begin{align}
	P_t f(\mathbf{x}) 
	=& f \pars{ \varphi^{t} (\mathbf{x}) } \Psitx{t}{\bx}  \nonumber \\
	& + \int_0^t \psi(s\vert \bx)   \int_{0}^\infty \frac{h(0,z) k \pars{\varphi^{s} (\mathbf{x}), z}}{\int_0^\infty h(0,z') k \pars{\varphi^{s} (\mathbf{x}), z'  }dz' }  \bigg\{   \nonumber \\
	& \qquad  f \pars{ \varphi^{t-s} (0,z) } \Psitx{t-s}{(0,z)}  \nonumber \\
	&  \qquad + \int_0^{t-s} \psi(u\vert \bx)   \int_0^\infty P_{t-s-u} f(0, \xi) \frac{h(0,\xi) k \pars{\varphi^{u} (0,z), \xi}}{\int_0^\infty h(0,\xi') k \pars{\varphi^{u} (0,z), \xi'  }d\xi' } d\xi du \bigg\}  dz ds \nonumber \\
	\geq&  \int_0^t \psi(s\vert \bx)   \int_0^\infty  f \pars{ \varphi^{t-s} \pars{0 , z}  } \Psitx{t-s}{(0,z)}  \nonumber \\
	& \qquad \qquad \frac{h(0,z) k \pars{\varphi^{s} (\mathbf{x}), z}}{\int_0^\infty h(0,z') k \pars{\varphi^{s} (\mathbf{x}), z'  }dz' } dz ds 
	\label{eq:lastDuhamelIter}
	\end{align}
	To obtain the desired result we aim to solve two crucial steps: 
	\begin{enumerate}[label=\roman*.]
		\item First, to prove the existence of some $C^1$-diffeomorphism which could allow us to change variables inside the latter integral as to obtain a measure over $\cx$.
		\item Second, to bound from below the resulting integral uniformly for every $\bx \in \mathscr{K}$, using its compactness.
	\end{enumerate}
	Fix some final time $t \geq 0$, and define $\gamma_{t}: \cx  \to \cx$ as
	\[
	\gamma_{t}(s,z): = \varphi^{t-s} \pars{0, z} .
	\]
	We show first that it's a differentiable function. Fix $s,z$ and suppose $$(a,y) = \gamma_{t}(s,z).$$ Then the function $u$ defined as $u(s) = \gamma_{t}(s,z) $ is the unique solution to the Initial Value Problem
	\begin{align*}
	\begin{cases}
	u'(s) &= - g(u(s)) , \ s \leq t \\
	u(0) &= (a,y)
	\end{cases}
	\end{align*}
	Thus, $\partial_s \gamma_t(s,z) = -g(u(s))$. Moreover, by Lemma~\ref{lemma:flowprop}, the smoothness of the vector field $g$ and the fact that the ODE system is autonomous gives the smoothness of the flow with respect to the initial condition. Thus, the Jacobian matrix of $\gamma_t$ equals for all $s\leq t$ and $z>0$:
	\begin{equation}
	\mathcal{D} \gamma_{t} (s,z) = \begin{bmatrix} 
	- g\pars {\varphi^{t-s}(0,z)} &&  \partial_z \varphi^{t-s}(0,z)
	\end{bmatrix},
	\label{eq:JacobianMatrix}
	\end{equation}
	where, from Lemma~\ref{lemma:flowprop}-2, the derivative of the flow with respect to the initial size is given by 
	\[
	\partial_z \varphi^t (0,z) = \exp \pars{
		\int_0^t \mathcal{D} g \pars { \varphi^s\pars{0,z}} 	ds
	} \begin{pmatrix} 0 \\ 1 \end{pmatrix} .
	\]
	where we recall that $\mathcal{D} g $ stands for the Jacobian matrix of $g$ and $\exp(\cdot)$ is an exponential matrix.
	Moreover, let $r \mapsto Y_{(a,y)}(r)$ be the unique orbit of the vector field $g$ passing trough the point $(a, y )$. Its is straightforward that $z = Y_{(a,y)}(0)$, so that the inverse of $\gamma_t$ is given for all $(a,y) \in \rr_+^2$ by
	\[
	\gamma^{-1}_{t}(a,y) = \pars{t - \phi^{-1}_{0, Y_{(a,y)}(0)}(a,y) \ , \ Y_{(a,y)}(0)}.
	\]
	Fig.~\ref{fig:diffeo_lconstant} summarises graphically the change of variables and the definition of $\gamma^{-1}_t$. Given $a,y,\mathbf{x}$ and $t$, the inversion of $\gamma$ consists in determinating the value of ordinate $z$ when the integral curve flowing towards $(a,y)$ hits the $y$-axis and the time $t-s$ required to go from this point to $(a,y)$. Since $Y_{(a,y)}$ (green line) is known, the inversion is direct. 
	\begin{figure}[ht]
		\centering			
		\tikzset{every picture/.style={line width=0.75pt}} 
		
		\begin{tikzpicture}[x=0.7pt,y=0.7pt,yscale=-1,xscale=1]
		
		\draw  [color={rgb, 255:red, 128; green, 128; blue, 128 }  ,draw opacity=1 ][fill=yellow!10 ][line width=0.75]  (176.67,205.83) -- (242.33,205.83) -- (242.33,253.5) -- (176.67,253.5) -- cycle ;
		\draw  (149,260.83) -- (549,260.83)(167,19.5) -- (167,266.5) (542,255.83) -- (549,260.83) -- (542,265.83) (162,26.5) -- (167,19.5) -- (172,26.5)  ;
		\draw [color={rgb, 255:red, 155; green, 155; blue, 155 }  ,draw opacity=1 ]   (204.5,129.75) .. controls (187.83,135.08) and (178.17,137.25) .. (171.5,139.25) ;
		\draw [shift={(171.5,139.25)}, rotate = 343.3] [color={rgb, 255:red, 155; green, 155; blue, 155 }  ,draw opacity=1 ][line width=0.75]      (5.59,-5.59) .. controls (2.5,-5.59) and (0,-3.09) .. (0,0) .. controls (0,3.09) and (2.5,5.59) .. (5.59,5.59) ;
		\draw [color={rgb, 255:red, 128; green, 128; blue, 128 }  ,draw opacity=1 ]   (356.5,81.75) .. controls (372,74.75) and (378,70.25) .. (394,58.39) ;
		\draw [shift={(394,58.39)}, rotate = 143.45] [color={rgb, 255:red, 128; green, 128; blue, 128 }  ,draw opacity=1 ][line width=0.75]      (5.59,-5.59) .. controls (2.5,-5.59) and (0,-3.09) .. (0,0) .. controls (0,3.09) and (2.5,5.59) .. (5.59,5.59) ;
		\draw    (149.41,152.29) -- (163.74,152.63) ;
		\draw  [fill={rgb, 255:red, 0; green, 0; blue, 0 }  ,fill opacity=1 ] (398.83,61.83) .. controls (398.83,59.44) and (400.77,57.5) .. (403.17,57.5) .. controls (405.56,57.5) and (407.5,59.44) .. (407.5,61.83) .. controls (407.5,64.23) and (405.56,66.17) .. (403.17,66.17) .. controls (400.77,66.17) and (398.83,64.23) .. (398.83,61.83) -- cycle ;
		\draw  [fill={rgb, 255:red, 74; green, 144; blue, 226 }  ,fill opacity=1 ] (162.83,258.17) .. controls (162.83,255.77) and (164.77,253.83) .. (167.17,253.83) .. controls (169.56,253.83) and (171.5,255.77) .. (171.5,258.17) .. controls (171.5,260.56) and (169.56,262.5) .. (167.17,262.5) .. controls (164.77,262.5) and (162.83,260.56) .. (162.83,258.17) -- cycle ;
		\draw [color={rgb, 255:red, 74; green, 144; blue, 226 }  ,draw opacity=1 ]   (338,201.83) .. controls (299.2,231.35) and (287.29,291.89) .. (172.73,263.27) ;
		\draw [shift={(171,262.83)}, rotate = 14.33] [color={rgb, 255:red, 74; green, 144; blue, 226 }  ,draw opacity=1 ][line width=0.75]    (10.93,-3.29) .. controls (6.95,-1.4) and (3.31,-0.3) .. (0,0) .. controls (3.31,0.3) and (6.95,1.4) .. (10.93,3.29)   ;
		\draw [line width=1.5]  [dash pattern={on 1.69pt off 2.76pt}]  (338,196.44) -- (338.33,268.33) ;
		\draw [color={rgb, 255:red, 65; green, 117; blue, 5 }  ,draw opacity=1 ]   (75,163.75) .. controls (280.2,137.4) and (346.2,117.4) .. (403.17,61.83) ;
		\draw [color={rgb, 255:red, 208; green, 2; blue, 27 }  ,draw opacity=1 ]   (215.67,238.33) .. controls (209.4,237.8) and (349,239) .. (425.4,101.4) ;
		\draw [shift={(425.4,101.4)}, rotate = 119.04] [color={rgb, 255:red, 208; green, 2; blue, 27 }  ,draw opacity=1 ][line width=0.75]    (17.64,-3.29) .. controls (13.66,-1.4) and (10.02,-0.3) .. (6.71,0) .. controls (10.02,0.3) and (13.66,1.4) .. (17.64,3.29)(10.93,-3.29) .. controls (6.95,-1.4) and (3.31,-0.3) .. (0,0) .. controls (3.31,0.3) and (6.95,1.4) .. (10.93,3.29)   ;
		\draw [color={rgb, 255:red, 128; green, 128; blue, 128 }  ,draw opacity=1 ]   (283.5,212.75) .. controls (299,205.75) and (315.5,197.75) .. (331.5,185.89) ;
		\draw [shift={(331.5,185.89)}, rotate = 143.45] [color={rgb, 255:red, 128; green, 128; blue, 128 }  ,draw opacity=1 ][line width=0.75]      (5.59,-5.59) .. controls (2.5,-5.59) and (0,-3.09) .. (0,0) .. controls (0,3.09) and (2.5,5.59) .. (5.59,5.59) ;
		\draw [color={rgb, 255:red, 155; green, 155; blue, 155 }  ,draw opacity=1 ]   (265.5,220.25) .. controls (248.83,225.58) and (230.5,228.75) .. (221.5,229.25) ;
		\draw [shift={(221.5,229.25)}, rotate = 356.82] [color={rgb, 255:red, 155; green, 155; blue, 155 }  ,draw opacity=1 ][line width=0.75]      (5.59,-5.59) .. controls (2.5,-5.59) and (0,-3.09) .. (0,0) .. controls (0,3.09) and (2.5,5.59) .. (5.59,5.59) ;
		\draw  [fill={rgb, 255:red, 248; green, 231; blue, 28 }  ,fill opacity=1 ] (166,146.71) -- (167.4,149.76) -- (170.52,150.25) -- (168.26,152.63) -- (168.79,155.98) -- (166,154.4) -- (163.21,155.98) -- (163.74,152.63) -- (161.48,150.25) -- (164.6,149.76) -- cycle ;
		\draw  [fill={rgb, 255:red, 248; green, 231; blue, 28 }  ,fill opacity=1 ] (338,191.31) -- (339.4,194.36) -- (342.52,194.85) -- (340.26,197.23) -- (340.79,200.58) -- (338,199) -- (335.21,200.58) -- (335.74,197.23) -- (333.48,194.85) -- (336.6,194.36) -- cycle ;
		\draw  [fill={rgb, 255:red, 0; green, 0; blue, 0 }  ,fill opacity=1 ] (215.67,238.33) .. controls (215.67,235.94) and (217.61,234) .. (220,234) .. controls (222.39,234) and (224.33,235.94) .. (224.33,238.33) .. controls (224.33,240.73) and (222.39,242.67) .. (220,242.67) .. controls (217.61,242.67) and (215.67,240.73) .. (215.67,238.33) -- cycle ;
		\draw [color={rgb, 255:red, 208; green, 2; blue, 27 }  ,draw opacity=1 ]   (383.55,158.95) -- (367.45,174.25) ;
		\draw [shift={(367.45,174.25)}, rotate = 136.46] [color={rgb, 255:red, 208; green, 2; blue, 27 }  ,draw opacity=1 ][line width=0.75]    (10.93,-3.29) .. controls (6.95,-1.4) and (3.31,-0.3) .. (0,0) .. controls (3.31,0.3) and (6.95,1.4) .. (10.93,3.29)   ;
		\draw [shift={(385,157.57)}, rotate = 136.46] [color={rgb, 255:red, 208; green, 2; blue, 27 }  ,draw opacity=1 ][line width=0.75]    (10.93,-3.29) .. controls (6.95,-1.4) and (3.31,-0.3) .. (0,0) .. controls (3.31,0.3) and (6.95,1.4) .. (10.93,3.29)   ;
		\draw [color={rgb, 255:red, 208; green, 2; blue, 27 }  ,draw opacity=1 ]   (278.41,225.37) -- (265.76,228.93) ;
		\draw [shift={(265.76,228.93)}, rotate = 164.29] [color={rgb, 255:red, 208; green, 2; blue, 27 }  ,draw opacity=1 ][line width=0.75]    (10.93,-3.29) .. controls (6.95,-1.4) and (3.31,-0.3) .. (0,0) .. controls (3.31,0.3) and (6.95,1.4) .. (10.93,3.29)   ;
		\draw [shift={(280.33,224.83)}, rotate = 164.29] [color={rgb, 255:red, 208; green, 2; blue, 27 }  ,draw opacity=1 ][line width=0.75]    (10.93,-3.29) .. controls (6.95,-1.4) and (3.31,-0.3) .. (0,0) .. controls (3.31,0.3) and (6.95,1.4) .. (10.93,3.29)   ;
		\draw [color={rgb, 255:red, 65; green, 117; blue, 5 }  ,draw opacity=1 ]   (221.36,141.48) -- (205.31,143.99) ;
		\draw [shift={(205.31,143.99)}, rotate = 171.11] [color={rgb, 255:red, 65; green, 117; blue, 5 }  ,draw opacity=1 ][line width=0.75]    (10.93,-3.29) .. controls (6.95,-1.4) and (3.31,-0.3) .. (0,0) .. controls (3.31,0.3) and (6.95,1.4) .. (10.93,3.29)   ;
		\draw [shift={(223.33,141.17)}, rotate = 171.11] [color={rgb, 255:red, 65; green, 117; blue, 5 }  ,draw opacity=1 ][line width=0.75]    (10.93,-3.29) .. controls (6.95,-1.4) and (3.31,-0.3) .. (0,0) .. controls (3.31,0.3) and (6.95,1.4) .. (10.93,3.29)   ;
		\draw [color={rgb, 255:red, 65; green, 117; blue, 5 }  ,draw opacity=1 ]   (298.74,122.37) -- (284.26,126.42) ;
		\draw [shift={(284.26,126.42)}, rotate = 164.37] [color={rgb, 255:red, 65; green, 117; blue, 5 }  ,draw opacity=1 ][line width=0.75]    (10.93,-3.29) .. controls (6.95,-1.4) and (3.31,-0.3) .. (0,0) .. controls (3.31,0.3) and (6.95,1.4) .. (10.93,3.29)   ;
		\draw [shift={(300.67,121.83)}, rotate = 164.37] [color={rgb, 255:red, 65; green, 117; blue, 5 }  ,draw opacity=1 ][line width=0.75]    (10.93,-3.29) .. controls (6.95,-1.4) and (3.31,-0.3) .. (0,0) .. controls (3.31,0.3) and (6.95,1.4) .. (10.93,3.29)   ;
		\draw [color={rgb, 255:red, 65; green, 117; blue, 5 }  ,draw opacity=1 ]   (381.04,81.33) -- (372.63,87.34) ;
		\draw [shift={(372.63,87.34)}, rotate = 144.46] [color={rgb, 255:red, 65; green, 117; blue, 5 }  ,draw opacity=1 ][line width=0.75]    (10.93,-3.29) .. controls (6.95,-1.4) and (3.31,-0.3) .. (0,0) .. controls (3.31,0.3) and (6.95,1.4) .. (10.93,3.29)   ;
		\draw [shift={(382.67,80.17)}, rotate = 144.46] [color={rgb, 255:red, 65; green, 117; blue, 5 }  ,draw opacity=1 ][line width=0.75]    (10.93,-3.29) .. controls (6.95,-1.4) and (3.31,-0.3) .. (0,0) .. controls (3.31,0.3) and (6.95,1.4) .. (10.93,3.29)   ;
		
		\draw (223.67,238.73) node [anchor=north west][inner sep=0.75pt]  [font=\normalsize]  {$\mathbf{x}$};
		\draw (425.67,113.4) node [anchor=north west][inner sep=0.75pt]  [color={rgb, 255:red, 208; green, 2; blue, 27 }  ,opacity=1 ]  {$\Gamma _{\mathbf{x}}^{+}$};
		\draw (412,52.4) node [anchor=north west][inner sep=0.75pt]    {$( a,y)$};
		\draw (162,271.4) node [anchor=north west][inner sep=0.75pt]    {$0$};
		\draw (209.68,120.51) node [anchor=north west][inner sep=0.75pt]  [color={rgb, 255:red, 128; green, 128; blue, 128 }  ,opacity=1 ,rotate=-342.69,xslant=-0.05]  {$t-s=\phi _{( 0,Y_{( a,y)}( 0))}^{-1}( a,y)$};
		\draw (72.67,138.23) node [anchor=north west][inner sep=0.75pt]    {$z = Y_{( a,y)}( 0)$};
		\draw (556.67,262.07) node [anchor=north west][inner sep=0.75pt]    {$a \textrm{ (\textit{age}) }$};
		\draw (139.33,0.4) node [anchor=north west][inner sep=0.75pt]    {$y \textrm{ (\textit{size}) }$};
		\draw (268.07,214.19) node [anchor=north west][inner sep=0.75pt]  [color={rgb, 255:red, 128; green, 128; blue, 128 }  ,opacity=1 ,rotate=-344.11,xslant=-0.05]  {$s$};
		\draw (341.4,197.76) node [anchor=north west][inner sep=0.75pt]    {$\varphi ^{s}(\mathbf{x})$};
		\draw (325.33,115) node [anchor=north west][inner sep=0.75pt]  [color={rgb, 255:red, 65; green, 117; blue, 5 }  ,opacity=1 ]  {$\Gamma _{( a,y)}^{-}$};
		\draw (178.67,209.23) node [anchor=north west][inner sep=0.75pt]    {$\mathscr{K} $};

		\end{tikzpicture}
		
		\caption{Graphical description of the change of variables defined by $\gamma_t$}
		\label{fig:diffeo_lconstant}
	\end{figure}
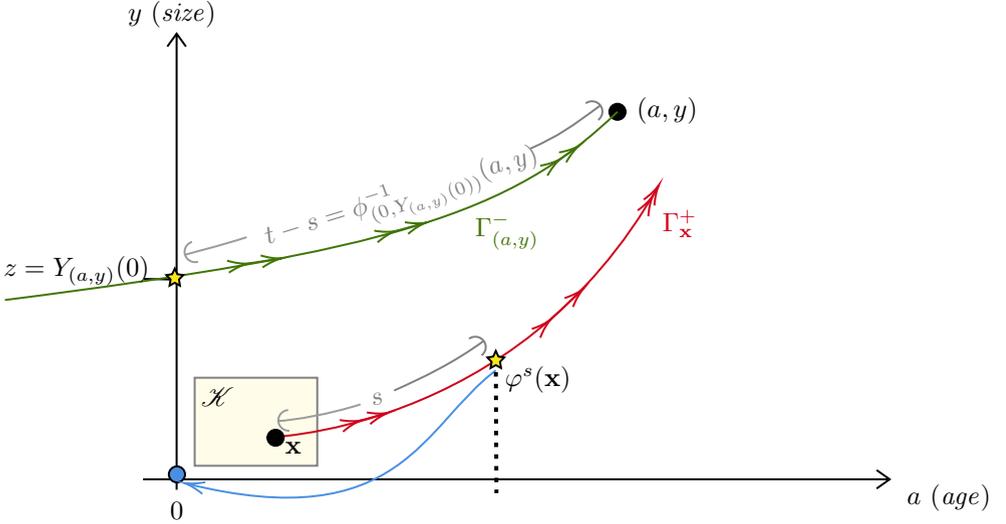
	We conclude that $\gamma_{t}$ is a $C^1$-diffeomorphism and then performing the change of variables $(a,y) = \gamma_{t}(s,z)$ in the RHS of Eq. \eqref{eq:lastDuhamelIter} gives
	\begin{align}
	P_t f(\bx) \geq  \int_{\rr_+^2}
	f(a,y) \Bigg\{  & 
	\ \psi\pars{t - \phi^{-1}_{0, Y_{(a,y)}(0)}(a,y) \vert  \bx }  \nonumber \\ 
	& \Psitx{\phi^{-1}_{0, Y_{(a,y)}(0)}(a,y)}{ (0, Y_{(a,y)}(0)) }   \nonumber \\
	& \frac{h(0, Y_{(a,y)}(0)) k \pars{\varphi^{t - \phi^{-1}_{0, Y_{(a,y)}(0)}(a,y)}(\bx), Y_{(a,y)}(0) } }{\displaystyle \int_0^\infty h(0, z) k \pars{\varphi^{t - \phi^{-1}_{0, Y_{(a,y)}(0)}(a,y)}(\bx), z } dz } \nonumber \\
	& \frac{1}{\abs{\det \mathcal{D} \gamma_t \pars{\gamma_t^{-1}(a,y)} }}  \mathds{1}_{\phi^{-1}_{0, Y_{(a,y)}(0)}(a,y) \leq t} \Bigg\} \ da \ dy . 
	\label{eq:Doeblin_justbeforebounds}
	\end{align}
	Now, using Assumptions~\ref{ass:all}, we can bound the functions and the Jacobian found in the obtained integral. First, since $g \geq 0$, note that $\norm{\varphi^t(\mathbf{x})} \geq \norm{\varphi^s(\mathbf{x})} $ for all $t > s$. Second, $\beta_- g_1(\mathbf{x}) \leq \beta(\mathbf{x}) \leq \beta_+ g_1(\mathbf{x}) $. And third, by the definition of the flow, $\int_0^t g_1 \pars{\varphi^s \pars{\mathbf{x}}} = \varphi^t_1(\mathbf{x})$ which equals the age at time $t$ of an individual with trait $\mathbf{x}$ at time 0.  Then, recalling that $\mathscr{K} \subset [\underline a,\bar a] \times [\underline y, \bar y]$, for all $t > 0$ we obtain the following bounds:
	\begin{enumerate}[i.]
		\item For all $(a_0,y_0) \in \mathscr{K}$, using the superior bounds on $g_1$ from Assumptions~\ref{ass:all}-i. we have
		\[
		\varphi^t_1(a_0, y_0) = a_0 + \int_0^t g_1(\varphi^s (a_0,y_0)) ds \leq a_0 +  \int_0^t c_1(1 + \varphi^s_1(a_0,y_0)) ds 
		\]
		Hence, by Gronwall inequality
		\[
		\varphi^t_1(a_0, y_0)  \leq \pars{a_0 + c_1 t}e^{c_1 t} \leq \pars{\bar a + c_1 t}e^{c_1 t}
		\]
		Analogously, using the lower bounds on $g_1$ from Assumptions~\ref{ass:all}-i., we obtain
		\[
		\varphi^t_1(a_0, y_0)  \geq a_0 e^{c_0 t} \geq \underline{a} e^{c_0 t}
		\]
		\item From the previous result, for all $\bx \in \mathscr{K}$
		\begin{align*}\Psitx{t}{\bx} \geq \exp \pars {- \beta_+ \int_0^t g_1(\varphi^{s} (\mathbf{x})) ds } &= e^{ - \beta_+ \varphi^t_1(\mathbf{x})}  \\ &\geq  e^{ - \beta_+ \pars{\bar a + c_1 t}e^{c_1 t}} , \end{align*}
		\item Analogously
		\[\beta(\varphi^t (\bx) )  \geq \beta_- g_1(\varphi^{t} (\mathbf{x}))   \geq \beta_- c_0 \varphi^t_1(\mathbf{x}) \geq \beta_- c_0 \underline{a} e^{c_0 t}. \]
		Therefore there are some constants $A_0, B_0 > 0$ such that
		\begin{equation}
		\psi\pars{t - s \vert  \bx } \geq  A_0 \exp \pars{- B_0 (1 + t - s) e^{c_1 (t-s)} }
		\label{eq:boundA0B0}
		\end{equation}
		\item Moreover, recall that the eigenfunction $h$ is solution to Eq. \eqref{eq:renewal}. Then, by Fubini's Theorem, for all $\bx \in \mathscr{K}$,
		\[
		h(\bx) = \int_0^\infty \pars{\int_0^\infty h(0,z) k \pars{\varphi^t(\bx),z } dz} \psi(t\vert \bx) e^{-\lambda t} dt .
		\]
		Thus, in particular, the integrability gives us that 
		\[
		\pars{\int_0^\infty h(0,z) k \pars{\varphi^t(\bx),z } dz} \frac{\psi(t\vert \bx) e^{-\lambda t} }{h(\bx)}  \to 0 \quad \textrm{ as } t \to +\infty.
		\]
		Therefore, there exist some constants $C_1, C_2 > 0$ such that for all $\bx \in \mathscr{K}$ there is some time $T(\bx) > 0$ such that for all $t > 0$ we have
		\[
		\pars{\int_0^\infty h(0,z) k \pars{\varphi^t(\bx),z } dz} \frac{\psi(t\vert \bx) e^{-\lambda t} }{h(\bx)} \leq  C_1 \mathds{1}_{t \geq T(\bx)} + C_2 \mathds{1}_{t < T(\bx)}
		\]
		where
		\[
		\sup_{\bx \in \mathscr{K}} \sup_{t < T(\bx)}  \pars{\int_0^\infty h(0,z) k \pars{\varphi^t(\bx),z } dz} \frac{\psi(t\vert \bx) e^{-\lambda t} }{h(\bx)} \leq C_2 ,
		\]
		since the suprema are taken in a compact set and for a continuous locally bounded function.
		Then, taking $C_0 = \max \setof{C_1,C_2}$ we have
		\[
		\frac{1}{\int_0^\infty h(0, z) k \pars{\varphi^{t} (\bx), z } dz} \geq \frac{\psi(t\vert \bx) e^{-\lambda t} }{C_0 h(\bx)} 
		\]
		where $\psi(t\vert \bx)$ can again be bounded by below using Eq. \eqref{eq:boundA0B0}. Moreover, the continuity of $h$ implies that $h$ is locally bounded and hence, for all $\bx \in \mathscr{K}$, $h(\bx) \leq H_0 < \infty$. Hence we obtain finally
		\begin{equation}
		\frac{1}{\int_0^\infty h(0, z) k \pars{\varphi^{t} (\bx), z } dz} \geq \frac{A_0}{C_0 H_0}  \exp \pars{-B_0 (1+t)e^{c_1 t}- \lambda t} 
		\label{eq:boundC0}
		\end{equation}			
		Note that these three estimates give bounds which are dependent only on $t$. 
		\item From \eqref{eq:JacobianMatrix}, for all $s \leq t$ and $z > 0$, the Jacobian determinant equals
		\begin{equation}
		\det \mathcal{D} \gamma_{t} (s,z) = \norm{g\pars {\varphi^{t-s}(0,z)}} \norm{\nabla_{z} \varphi^{t-s}(0,z) } \sin \theta(s,t,z)
		\label{eq:detD_casConstant}
		\end{equation}
		where $\theta(s,t,z)$ is the angle between $g\pars {\varphi^{t-s}(0,z)}$ and $\nabla_{z} \varphi^{t-s}(0,z) $. Hence, from Lemma~\ref{lemma:flowprop}.2, we get
		\begin{align*}
		\abs{\det \mathcal{D} \gamma_t \pars{s,z} } &\leq \norm{g\pars {\varphi^{t-s}(0,z)}} \ \mnorm{\mathcal{D} \varphi^{t-s} \pars {0,z}}, 
		\end{align*}
		where $\mnorm{\cdot}$ is the matrix norm induced by $\norm{\cdot}$, and therefore
		\begin{align}
		\abs{\det \mathcal{D} \gamma_t \pars{\gamma_t^{-1}(a,y)} } &\leq  \norm{g(a,y)} 
		\mnorm{\mathcal{D} \varphi^{\phi^{-1}_{0, Y_{(a,y)}(0)}(a,y)} \pars{0,Y_{(a,y)}(0)}}  \nonumber \\
		&=: \norm{g\pars {a,y}}  E_0(a,y)  \label{eq:boundDet}
		\end{align}
		Note that this bound depends only on $(a,y)$ and neither on $\bx$ or $t$.
	\end{enumerate}
	Hence, coming back to Eq. \eqref{eq:Doeblin_justbeforebounds} and applying the bounds \eqref{eq:boundA0B0}, \eqref{eq:boundC0} and \eqref{eq:boundC0} to the integrands, we obtain
	\begin{align}
	P_t f(\bx) \geq  \int_{\rr_+^2}
	f(a,y) \Bigg\{  & 
	\ \frac{A_0^2}{C_0 H_0} \exp \pars{- 2 B_0 (1 + t) e^{c_1 t} - \lambda t } \nonumber \\
	& \Psitx{\phi^{-1}_{0, Y_{(a,y)}(0)}(a,y)}{0, Y_{(a,y)}(0) }   \nonumber \\ 
	& h(0, Y_{(a,y)}(0)) k \pars{\varphi^{t - \phi^{-1}_{0, Y_{(a,y)}(0)}(a,y)}(\bx), Y_{(a,y)}(0) }  \nonumber \\
	& \frac{1}{\norm{g\pars {a,y}}  E_0(a,y)} \mathds{1}_{\phi^{-1}_{0, Y_{(a,y)}(0)}(a,y) \leq t} \Bigg\} \ da \ dy . 
	\label{eq:Doeblin_justafterbounds}
	\end{align}
	Now, we make use of the petite-set condition which allows us to average the value of $P_t f(\bx)$ against a discrete sampling measure $\mu(dt)$ over a $\Delta$-skeleton. This is, consider some $\Delta > 0$, which will be fixed later on, and a measure $\mu$ over $\setof{j \Delta : j \in \mathbb{N}}$, characterised by a sequence $(\mu_j)_{j \in \mathbb{N}}$ with $\sum \mu_j = 1$ and $\mu_j > 0$ for all $j \in \mathbb{N}$. We have
	\begin{align*}
	\ap{\mu}{\delta_{\bx} P_{\cdot}f}  \geq \sum_{j=0}^{\infty} \mu_j \int_{\cx}
	f(a,y)   & \ k \pars{\varphi^{j \Delta - \phi^{-1}_{0, Y_{(a,y)}(0)}(a,y)}(\mathbf{x}), Y_{(a,y)}(0) }  \\
	& \ \zeta(a,y) e^{- \tilde \beta j \Delta e^{j \Delta} }   \mathds{1}_{\phi^{-1}_{0, Y_{(a,y)}(0)}(a,y) \leq j \Delta } \ da \ dy,
	\end{align*}
	where the the function $\zeta(a,y)$ is constructed by regrouping all the terms which depend only on $(a,y)$ (and neither on $\bx$ or $t$), and the constant $\tilde \beta > 0$ is obtained after selecting only the dominant term inside the exponential.
	Now, it remains to loose the dependency on $\mathbf{x}$ using that $\mathbf{x} \in \mathscr{K}$ to find a uniform lower bound for the whole compact. By Assumption~\ref{ass:all}-(iv), we have that for all $z$, exists $D(z) \subset \rr_+$ such that $k(\varphi^s(\mathbf{x}), z) > \varepsilon(z) \mathds{1}_{D(z)}(\varphi^s(\mathbf{x}))$. Then, let
	\[
	\mathcal{T}(\mathbf{x},z) := \setof{s>0 : \varphi^s(\mathbf{x}) \in D(z) } ,
	\]
	then
	\[
	k(\varphi^s(\mathbf{x}), z) > \varepsilon(z) \mathds{1}_{\mathcal{T}(\mathbf{x},z) }(s).
	\]
	Now, let $\Delta = \inf_{z>0} \textrm{diam} (D(z)) > \delta_- > 0$. Then, for all $\mathbf{x} \in \mathscr{K}$ and $z > 0$ there exists $n = n(\bx,z) \in \mathbb{N}$ such that $n \Delta \in \mathcal{T}(\mathbf{x},z) $. Then for all $\bx \in \mathscr{K}$ and $z > 0$, 
	\[
	\sum_{j=0}^\infty \mathds{1}_{j \Delta \in \mathcal{T}(\mathbf{x},z) } \geq 1.
	\]
	Moreover, since for all $z>0$, $\textrm{diam} (D(z)) < \delta_+$, there exists some $j$ big enough such that the trajectory leaves $D(z)$. In particular, the compactness of $\mathscr{K}$ implies that it exists $j^*$ such that for all $\mathbf{x} \in \mathscr{K}$
	\[
	\mathds{1}_{j \Delta \in \mathcal{T}(\mathbf{x},z) } = 0 \ \  \forall j \geq j^*.
	\]
	Therefore for any sampling measure $(\mu_j)_j$ we have 
	\[
	\sum_{j=0}^\infty \mu_j \mathds{1}_{j \Delta \in \mathcal{T}(\mathbf{x},z) } \geq \min_{j \leq j^*} \mu_j  ,
	\]
	and finally for all fixed $\tau > 0$,
	\begin{align*}
	\sum_{j=0}^{\infty} \mu_j  e^{- \tilde \beta j \Delta e^{j \Delta} } k \pars{\varphi^{j \Delta - \tau }(\mathbf{x}), z } \mathds{1}_{\tau \leq j \Delta} &\geq \sum_{j=0}^{\infty} \mu_j  e^{- \tilde \beta j \Delta e^{j \Delta} } \epsilon(z) \mathds{1}_{j \Delta - \tau \in \mathcal{T}(\mathbf{x},z) }  \\
	&\geq \epsilon(z) \min_{j \leq j^*} \mu_j \ \min_{j \leq j^*} e^{- \tilde \beta j \Delta e^{j \Delta}} 
	\end{align*}
	from what we can conclude that
	\[
	\ap{\mu}{\delta_{\bx} P_{\cdot}f}  \geq \int_\cx f(a,y) \nu(a,y) da dy
	\]
	with
	\[
	\nu(a,y) = \zeta(a,y) \epsilon \pars{ Y_{(a,y)}(0)}  e^{- \tilde \beta j^* e^{\Delta j^* \Delta}} \min_{j \leq j^*} \mu_j
	\]
\end{proof}	
Finally, the proof of the main theorem~\ref{thm:main} is a direct application of Harris Theorem~\ref{thm:harris}. 		

\section{Application: Steady-state size distribution of the adder model of bacterial proliferation}	
\label{sec:adder}
We recall the generator of the adder model of \textit{E. coli} growth introduced in Example~\ref{ex:adder}:
\begin{align*}
\mathcal{Q}f (a,y)
=& \lambda y \pars{ \partial_a + \partial_y }f(a , y) \nonumber \\
&+ \lambda y B(a)  \pars{ 2 \int_0^1 f(0,\rho y)  F(\rho) d\rho  -f(a,y)}  - d_0 f(a,y). 
\end{align*}
We assume that:
\begin{assumptions}
	\label{ass:1}
	Suppose
	\begin{enumerate}[label=(A\arabic*)]
		\item There exist $0 < \underline{b} \leq \bar{b} < \infty$ such that for all $a \geq 0$, $\underline{b} < B(a) < \bar{b}$.
		\item $F$ is a continuous positive function in $[0,1]$, with connected support. We call for all $k \geq 0$,
		\[
		m_k = \int_0^1 \rho^k F(\rho) d \rho
		\]
		and suppose that $m_0 = 1$, $m_1= 1/2$ and $m_2 < +\infty$. Note that, since $\rho \in (0,1)$ almost surely, then for all $k>0$ we have $m_k \leq m_1 = 1/2$. 
		\item $\lambda > d_0$.
	\end{enumerate}
\end{assumptions}

\begin{remark}[Doob $h$-transform.] { 
In this case, it is straightforward to verify that $h(a,y) = y$ is an eigenfunction of $\mathcal{Q}$ associated to eigenvalue $\Lambda = \lambda - d_0$. In particular, from Eq. \eqref{eq:A} the Doob $h$-transformed semigroup $P_t$ is generated by the conservative infinitesimal generator
\begin{align}
\mathcal{A}f (a,y) = \lambda y \pars{ \partial_a + \partial_y }f(a , y)
&+ 2 \lambda y B(a)   \int_0^1 \pars {f(0,\rho y)   - f(a,y)} \rho F(\rho) d\rho  .
\label{eq:A_adder}
\end{align}
Indeed, the rescaled kernel gives
\[
\frac{h(0,z)}{h(a,y)} k(a,y,z) = 2 \frac{z}{y} \cdot \frac{1}{y} F \pars{\frac{z}{y}} \mathds{1}_{z \leq y} ,
\]
which under the change of variables $z \mapsto \rho = z/y$ gives the probability density $\rho \mapsto 2 \rho F(\rho)$ supported in $[0,1]$, which is indeed a probability by Assumption (A2). 
}
\end{remark}

Then, we have the following result of exponential convergence, that completes the analysis started by~\cite{gabriel:hal-01742140} from an operator theory approach, where exponential convergence could not be obtained from the estimates of relative entropy.

\begin{theorem}
	Under Assumptions (A1)-(A3), there is a unique probability measure $\pi^*$ such that there exist constants $C, \omega > 0$ which verify Eq. \eqref{eq:exponCV}  with $\Lambda = \lambda - d_0$, $h(a,y)=y$ and $V(a,y) = y^{-1} + y$. Moreover $\pi^*$ admits a density given explicitly by
	\[
	\pi^*(a,y) = \frac{\exp \pars{- \int_0^a B(\alpha) d\alpha } }{y^2} \eta^* (y-a) ,
	\]
	where $\eta^*$ is the unique solution to the fixed point problem
	\[
	\eta^*(y) =  2 \int_0^1 \setof{ \int_0^{\frac{x}{\rho}}  \psi \pars{\frac{y}{\rho} - z}    \eta^* \pars{z}   dz } F(\rho)d \rho,
	\]
	where
	$
	\psi(a) = B(a)  \exp \pars{- \int_0^a B(\alpha) d\alpha } .
	$
	\label{thm:main_adder}
\end{theorem}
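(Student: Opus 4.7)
The plan is to apply Theorem \ref{thm:main} to the adder model and then read off the explicit form of $\pi^*$. There are four tasks: (a) exhibit eigenelements of $\mathcal{Q}$; (b) verify the Foster--Lyapunov drift condition (H2) with the proposed $V$; (c) verify the Doeblin minoration condition (H1); (d) compute $\pi^*$ explicitly.

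\textbf{(a) Eigenelements.} A direct computation shows that $h(a,y) = y$ satisfies $\mathcal{Q} h = (\lambda - d_0) h$: the transport part contributes $\lambda y \cdot 1 = \lambda y$, the jump part vanishes because $2 \int_0^1 \rho F(\rho) d\rho = 2 m_1 = 1$, and the death part contributes $-d_0 y$. Since $h > 0$ on $\cx$, no Krein--Rutman/truncation argument is required: one can Doob-transform directly with $(\Lambda, h) = (\lambda - d_0, y)$, obtaining a conservative Markov semigroup $P_t$ whose generator is of the form \eqref{eq:A} with the $-d_0$ absorbed into the eigenvalue.

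\textbf{(b) Drift condition.} For $V(a,y) = y^{-1} + y$, compute $\mathcal{A}V$. The transport part yields $\lambda y (\partial_a + \partial_y) V = -\lambda y^{-1} + \lambda y$. The transformed jump weight $\frac{h(0,z)}{h(a,y)} k(a,y,z) = \frac{2z}{y^2} F(z/y)$ becomes, after $z = \rho y$, equal to $2\rho F(\rho)$; hence $\int_0^1 2\rho V(0,\rho y) F(\rho) d\rho = 2 y^{-1} + 2 m_2 y$, and a short calculation gives
\begin{equation*}
\mathcal{A}V(a,y) = -\lambda y^{-1} + \lambda y + \lambda B(a)\pars{1 + (2m_2 - 1) y^2}.
\end{equation*}
Because $\rho \in (0,1)$ almost surely and $m_1 = 1/2$, one has $m_2 < 1/2$, so $2m_2 - 1 < 0$; combined with $B \geq \underline{b} > 0$ this gives a strictly negative leading term in $y^2$. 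Consequently, for any $c \in (0,\lambda)$ the small-$y$ regime is dominated by $(c-\lambda) y^{-1}$ and the large-$y$ regime by $\lambda \underline{b} (2m_2-1) y^2$, while the intermediate range is compact; thus $\mathcal{A}V + cV$ is bounded above and (H2) holds. Moreover $V$ is coercive on $\cx$ (on $\cx$, $a \to \infty$ forces $y \to \infty$).

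\textbf{(c) Doeblin condition.} Assumptions \ref{ass:all}-(ii)--(iv) are immediate (with $a^* = 0$, a fragmentation kernel of total mass $\|k(a,y,\cdot)\|_1 = 2$, and the lower-bound property following from the continuity and positivity of $F$ on its support). Assumption \ref{ass:all}-(i) as written fails because $g_1 = \lambda y$ is not bounded by $c_1(1+a)$; however, the state space $\cx = \setof{(a,y) : 0 < a < y}$ is flow-invariant, the quantity $y - a$ is conserved, and the flow is explicit: $\varphi^t(a_0,y_0) = \pars{a_0 + y_0(e^{\lambda t}-1), \ y_0 e^{\lambda t}}$. I would revisit the proof of Proposition \ref{lemma:doeblinmin}, substituting this explicit flow wherever the Gronwall-type bound $\varphi_1^t \leq (a_0 + c_1 t) e^{c_1 t}$ was invoked. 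All lower bounds on $\psi$, $\Psi$, the transition density, and the Jacobian of $\gamma_t$ remain valid on any compact $\mathscr{K} \subset \cx$, yielding the petite-set condition.

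\textbf{(d) Explicit form of $\pi^*$.} The limit measure $\pi^*$ satisfies the dual eigenvalue equation $\mathcal{Q}^* \pi^* = \Lambda \pi^*$. An integration by parts gives the bulk equation $(\partial_a + \partial_y) \pi^* = -2 \pi^*/y - B(a) \pi^*$ together with the boundary identity $\lambda y \pi^*(0,y) = 2\lambda \int_y^\infty F(y/y') \int_0^{y'} B(a) \pi^*(a, y')\, da \, dy'$. Because $y - a$ is conserved along characteristics, integrating the bulk equation along them yields the ansatz $\pi^*(a,y) = y^{-2} e^{-\int_0^a B(\alpha) d\alpha} \eta^*(y-a)$ (the $y^{-2}$ coming from $-2\pi^*/y$, the exponential from $-B(a)\pi^*$). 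Inserting this ansatz in the boundary identity and substituting $y' = y/\rho$ (so that $dy'/y'^2 = -d\rho/y$) reduces it, after cancelling factors, to the renewal equation stated in the theorem. Uniqueness of $\eta^*$ up to normalisation follows from uniqueness of the invariant probability in Harris' theorem.

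The main obstacle is step (c): hypothesis (i) of Assumption \ref{ass:all} fails literally, so one must revisit the Doeblin proof with the explicit exponential flow. The conservation of $y-a$ along trajectories is the structural fact that makes all the compact-set lower bounds in \eqref{eq:Doeblin_justbeforebounds} carry over without altering the overall argument.
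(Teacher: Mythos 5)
Your proof is correct and follows essentially the same route as the paper: identify $(\Lambda,h)=(\lambda-d_0,\,y)$ directly (no Krein--Rutman needed), verify the drift condition for $V(a,y)=y^{-1}+y$, invoke Proposition~\ref{lemma:doeblinmin}, and then solve the dual eigenproblem by characteristics. Two remarks on the differences.

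\emph{On step (d).} You work directly with $\mathcal{Q}^*\pi^*=\Lambda\pi^*$, whereas the paper first solves $\pi\mathcal{A}=0$ for the invariant law $\pi$ of the conservative process $P_t$ and then sets $\pi^*=\pi/h$. The two are equivalent (write $\langle\pi,\mathcal{A}f\rangle=\langle\pi^*,\mathcal{Q}(hf)\rangle-\Lambda\langle\pi^*,hf\rangle$ and use $h>0$), and both lead to the same bulk transport equation $(\partial_a+\partial_y)\pi^*=-(2/y+B(a))\pi^*$ and the same boundary renewal identity. Your route has the small advantage of not carrying $h$ through the characteristic computation and of making the exponent on $y$ transparent ($y^{-2}$ directly, rather than $y^{-1}$ followed by a division by $h=y$). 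The drift computation in (b) agrees with the paper's after rearrangement: the paper writes $\mathcal{A}V=-\lambda V+\varDelta$ with $\varDelta(a,y)=2\lambda y+2\lambda B(a)\bigl(\tfrac12+(m_2-\tfrac12)y^2\bigr)$, which is your expression with $c=\lambda$.

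\emph{On step (c).} You are right to flag that Assumption~\ref{ass:all}-(i) is not verbatim satisfied by the adder: $g_1(a,y)=\lambda y$ cannot be bounded by $c_1(1+a)$, since on $\cx=\setof{0<a<y}$ the size $y$ can be arbitrarily large while $a$ stays small. The paper's proof of Theorem~\ref{thm:main_adder} asserts the minoration is ``a direct application of Proposition~\ref{lemma:doeblinmin}'' and only discusses the lower-bound assumption on the kernel, glossing over this point. Your proposed fix is the right one: the only place the upper bound on $g_1$ enters the Doeblin proof is the Gronwall estimate $\varphi_1^t(a_0,y_0)\leq(\bar a+c_1 t)e^{c_1 t}$ feeding into the bounds on $\psi$ and $\Psi$; for the adder the flow is explicit, $\varphi^t(a_0,y_0)=(a_0+y_0(e^{\lambda t}-1),\,y_0 e^{\lambda t})$, so $\varphi_1^t(a_0,y_0)\leq\bar a+\bar y(e^{\lambda t}-1)$ on $\mathscr{K}\subset[\underline a,\bar a]\times[\underline y,\bar y]$, which is all the argument needs (a finite, $t$-dependent bound uniform on the compact). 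The invariance of $y-a$ under the flow, which you point out, is precisely what keeps the bounds (ii)--(v) of the Doeblin proof valid on compacts of $\cx$. The same flexibility allows $a^*=0$ in Assumption~\ref{ass:all}-(ii): the condition $\beta(0,\cdot)\equiv0$ is used in the paper only for the existence argument in Section~\ref{sec:eigenelementsQ}, which is bypassed here since $h$ is explicit. So your proposal is not merely correct; it repairs a gap the paper leaves implicit. One caveat: you state but do not carry out the revised Doeblin bounds in full detail; since the change is a substitution of explicit expressions for Gronwall estimates, this is a genuine but routine task.
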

\begin{proof}
	\begin{enumerate}
		\item \textbf{minorisation condition.} It is a direct application of Proposition~\ref{lemma:doeblinmin}, since the same hypothesis in Assumptions~\ref{ass:all} are verified by Assumptions~\ref{ass:1}. Assumption~\ref{ass:1}-(v) requires some attention. Indeed, since $F$ is bounded and with connected support, $k(a,y,z) = \frac{1}{y} F\pars{\frac{z}{y}} \mathds{1}_{z \leq y}$ can be lower bounded in the form $k(a,y,z) >\varepsilon(z) \mathds{1}_{y \in D(z)}$, with $\varepsilon(z)$ of order $1/z$, as represents the example of Fig.~\ref{fig:kAdderAssumps}.
		
		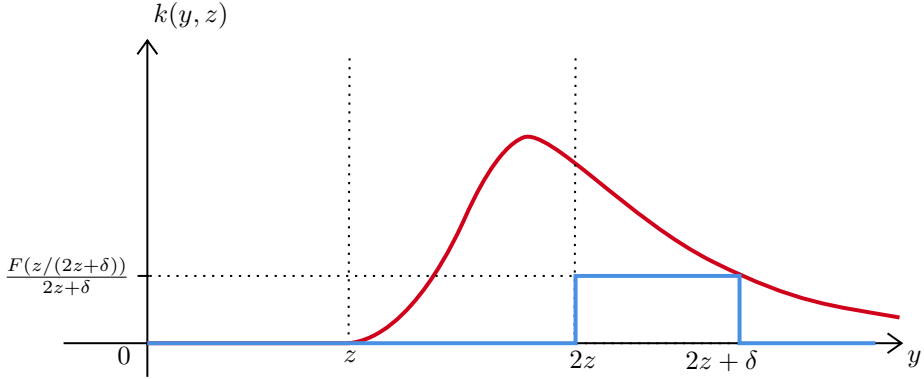
\begin{figure}[ht]
			\centering

			\tikzset{every picture/.style={line width=0.75pt}} 
			
			\begin{tikzpicture}[x=0.75pt,y=0.75pt,yscale=-1,xscale=1]
			
			\draw  (106.5,185) -- (525.5,185)(148.4,32) -- (148.4,202) (518.5,180) -- (525.5,185) -- (518.5,190) (143.4,39) -- (148.4,32) -- (153.4,39)  ;
			\draw [color={rgb, 255:red, 208; green, 2; blue, 27 }  ,draw opacity=1 ][line width=1.5]    (249,185) .. controls (280.5,182) and (303.5,130) .. (307.5,121) .. controls (311.5,112) and (323.5,86) .. (336.5,81) .. controls (349.5,76) and (389.5,123) .. (431.5,144) .. controls (473.5,165) and (490.5,166) .. (524.5,172) ;
			\draw  [dash pattern={on 0.84pt off 2.51pt}]  (249.5,41) -- (249,185) ;
			\draw [color={rgb, 255:red, 208; green, 2; blue, 27 }  ,draw opacity=1 ][line width=1.5]    (148.4,185) -- (249,185) ;
			\draw  [dash pattern={on 0.84pt off 2.51pt}]  (149.5,151) -- (362.5,151) ;
			\draw    (143.5,151) -- (149.5,151) ;
			\draw  [dash pattern={on 0.84pt off 2.51pt}]  (362.5,41) -- (362,185) ;
			\draw  [dash pattern={on 0.84pt off 2.51pt}] (444.5,151) -- (362.5,151) -- (362.5,185) -- (444.5,185) -- cycle ;
			\draw [color={rgb, 255:red, 74; green, 144; blue, 226 }  ,draw opacity=1 ][line width=1.5]    (148.4,185) -- (362.5,185) -- (362.5,151) -- (444.5,151) -- (444.5,185) -- (512.5,185) ;
			
			\draw (245,187.4) node [anchor=north west][inner sep=0.75pt]    {$z$};
			\draw (150,11.4) node [anchor=north west][inner sep=0.75pt]    {$k( y,z)$};
			\draw (527,187.4) node [anchor=north west][inner sep=0.75pt]    {$y$};
			\draw (75,140) node [anchor=north west][inner sep=0.75pt]    {$\frac{F( z/( 2z+\delta ))}{2z+\delta }$};
			\draw (358,187.4) node [anchor=north west][inner sep=0.75pt]    {$2z$};
			\draw (132,185.4) node [anchor=north west][inner sep=0.75pt]    {$0$};
			\draw (416,187.4) node [anchor=north west][inner sep=0.75pt]    {$2z+\delta $};

			\end{tikzpicture}
			
			\caption{Example of minorisation for $k(a,y,z) = \frac{1}{y} F\pars{\frac{z}{y}} \mathds{1}_{z \leq y}$ and $F$ given by the probability density function of a Beta distribution. Then we have $k(a,y,z) >\varepsilon(z) \mathds{1}_{y \in D(z)}$ as required by Assumption~\ref{ass:all}-(v), with $|D(z)| = \delta$ for all $z$.}
			\label{fig:kAdderAssumps}
		\end{figure}
		
		In general, we have for all $\delta > 0$:
		\[
		k(a,y,z) > \min_{z' \in [2z, 2z+\delta]} \frac{F(z/z')}{z'} \ \mathds{1}_{y \in [2z, 2z+\delta]}
		\]
		and we verify then Assumption~\ref{ass:all}-(v) with $\varepsilon(z) = \min_{z' \in [2z, 2z+\delta]} \frac{F(z/z')}{z'} $ and $D(z) = [2z, 2z+\delta]$ for a chosen $\delta > 0$.
		
		Fig.~\ref{fig:doeblinAdder} shows the characteristics curves $y-a = \textrm{constant}$, and the shadowed region corresponds the space that is a priori reachable from the initial point along trajectories with exactly one jump before time $t$. It is the version of Fig.~\ref{fig:diffeo_lconstant} in this specific case. Moreover, given an initial point (A in Fig.~\ref{fig:diffeo_lconstant}) and total trajectory time, this reachable region is compact, which also simplifies some minorisations. 
		
		Finally, depending on the choice of the compact set $\mathscr{K} \subset [\underline{a}, \bar a] \times [\underline{y}, \bar y]$ and of $\delta$ (which gives also the discretisation timeStep~of the $\delta$-skeleton), the value of the minorant measure $\nu$ can be computed explicitly by numerical approximations, as given in Fig.~\ref{fig:nus} for different forms of $F$.
		
		\begin{figure}[ht]
			\centering
			\begin{tikzpicture}
			\begin{axis}[
			clip=false,
			xmin=0,
			xmax=13,
			ymin=0,
			ymax=13,
			axis x line = bottom,
			axis y line = left,
			xtick={1,2,6},
			ytick={5,7,9,10},
			xticklabels={$a_0$,$a^*$,$a_0+y_0(e^{\lambda t}-1)$},
			yticklabels={$y_0$,,$y^*$,$y_0 e^{\lambda t}$},
			]
			\node[below right] at (13,0) {$a$};
			\node[above left] at (0,13) {$y$};
			\draw[thick] (0,0) -- (13,13);
			\node[below right] at (13,13) {$a=y$};
			\node at (2,12) {$\cx$};
			\draw[thick, dotted] (0,0) -- (6, 12);
			\node[above right] at (6,11) {$a=y(1-e^{-\lambda t})$};
			\draw[fill=gray!10, dashed]   (0,0) -- (5,10) -- (0,10) -- (0,0);%
			\draw[help lines, dotted] (1,0) -- (1,5) -- (0,5);%
			\draw[help lines, dotted] (6,0) -- (6,10) -- (0,10);%
			\draw[help lines, dotted] (2,0) -- (2,9) -- (0,9);%
			\draw[red, stealth reversed-{stealth}{stealth}] (1,5) -- (3,7) ;%
			\draw[red, -{stealth}{stealth}] (3,7) -- (5,9);%
			\draw[red, dashed] (5,9) -- (6,10);%
			\draw[fill] (1,5) circle [radius=0.1];
			\node[below right] at (1,5) {$A$};
			\draw[fill] (5,9) circle [radius=0.1];
			\node[below right] at (5,9) {$B$};
			\draw[fill] (0,7) circle [radius=0.1];
			\node[below right] at (0,7) {$C$};
			\draw[fill] (2,9) circle [radius=0.1];
			\node[below right] at (2,9) {$D$};
			\draw[red, -{stealth}{stealth}] (0,7) -- (2,9);%
			\draw[cyan, -stealth] (5,9) to [out=180,in=-15] (0,7);%
			\end{axis}
			\end{tikzpicture}
			
			\caption{Ideal trajectory from initial point $A=(a_0,y_0)$ to point $D=(a^*,y^*)$ in time $t$. The individual spends a time $t-s$ growing from $A$ to $B$. Then, it divides and renews at point $C$. Finally, it grows the remaining time $s$ until point $D$. }
			\label{fig:doeblinAdder}
		\end{figure}
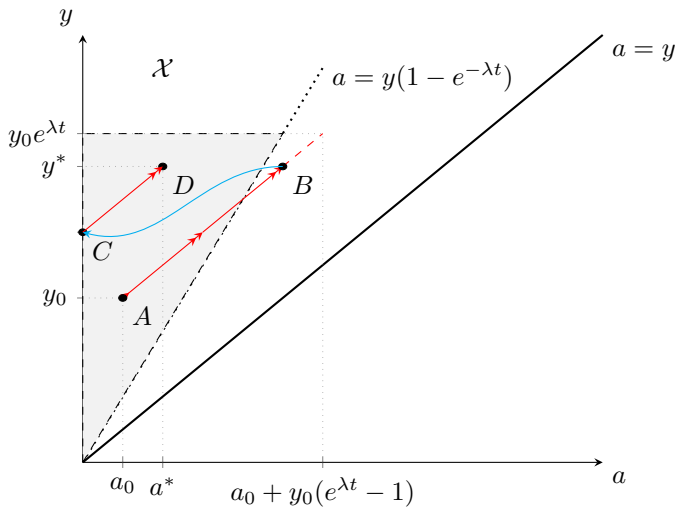			
		\item \textbf{Lyapunov-Foster condition.} { Consider the generator $\mathcal{A}$ defined by Eq. \eqref{eq:A_adder}}. Let $V(a,y) = y^{-1} + y$ with. It is clear that $V(a,y) \to \infty$ as $\vert (a,y)\vert  \to \infty$.
		Let $v(a,y) = y^k$, then
		\begin{align*}
		\mathcal{A} v(a,y) &= \lambda y^k + 2 \lambda y B(a) \int_0^1 \pars{\rho^k y^k - y^k} \rho F(\rho) d\rho \\
		&= \pars{k \lambda + 2 \pars{m_{k+1} - \frac{1}{2}} \lambda y B(a) } v(a,y)
		\end{align*}
		So, for $V(a,y) = y^{-1} + y$, we obtain
		\begin{align*}
		\mathcal{A} V(a,y) = &- \lambda  V(a,y) +  \varDelta(a,y),
		\end{align*}
		where
		\[
		\varDelta(a,y) := 2 \lambda y + 2 \lambda B(a) \pars{ \pars{\frac{1}{2} + \pars{m_{2} - \frac{1}{2}} y^2 }} .
		\]
		We already have $- \lambda < 0$ in the first term of the RHS. It remains to prove that $\varDelta(a,y)$ defined in the RHS above, is bounded.
		Indeed, notice that
		\[
		\varDelta(a,y) \leq 2 \lambda \pars{\bar b \pars{m_{2} - \frac{1}{2}} y^2 + y + \frac{\bar b}{2}}
		\]
		which is quadratic in $y$ with a negative quadratic coefficient since $m_{2} - 1/2 \leq 0$. Thus
		\begin{equation}
		\varDelta(a,y) \leq \lambda \pars{\bar b + \frac{1}{\bar b (1 - 2m_2)}} =:d \in \rr_+
		\end{equation}
		So finally we obtain that for every $(a,y) \in \rr_+^2$
		\[
		\mathcal{A} V(a,y) \leq - \lambda V(a,y) + d
		\]
		\item \textbf{Application of V-uniform Ergodic Theorem}
		Using Theorem~\ref{thm:harris} we conclude the existence of some $C, \omega > 0$ such that for every ${\bx} \in \mathcal{X}$ and $t \geq 0$
		\begin{equation}
		\norm{\delta_{\bx} P_t - \pi}_{V} \leq C  V(\bx) \exp(-\omega t) 
		\end{equation}
		Now, using that by construction, $M_t f = e^{\Lambda t} h P_t \pars{f/h}$, we obtain that for all $\bx \in \mathcal{X}$,
		\begin{equation}
		\norm{e^{-\Lambda t} \delta_{\bx} M_t - h(\bx) \pi^* }_{V} \leq C V(\bx) e^{-\omega t}
		\end{equation}
		where for every $A \in \mathcal{B}(\rr_+^2 \setminus \setof{0})$,
		\[
		\pi^*(A) = \int_A \frac{\pi(d\bx)}{h(\bx)}
		\]
		Moreover, we know that $h(a,y) = y$. On the other hand, $\pi$ is the unique solution to $\pi P_t = \pi$, or equivalently, to the dual eigenvalue problem associated to the conservative problem $\pi \mathcal{A} = 0$. From \eqref{eq:A} we obtain from the latter that $\pi$ is then the measure solution to the following PDE in the sense of distributions
		\begin{equation}
		\begin{cases}
		\begin{split}
		(\partial_a + \partial_y) (\lambda y \pi( a, y))  -  \lambda y B(a)  \pi (a,y)  &= 0  \\
		\pi(0,y) &=  2 \int_0^1 \int_0^\infty B(a) \frac{F(\rho)}{\rho} \pi \pars{a, \frac{y}{\rho}} da d \rho \\
		\int_0^\infty \int_0^y \pi(a,y) da dy &= 1
		\end{split}
		\end{cases}
		\label{eq:Eigenproblem:pi}
		\end{equation}
		We solve it by the method of characteristics. From the first equation of \eqref{eq:Eigenproblem:pi}, $\pi$ solves the ODE
		\[
		\begin{cases}
		\frac{d}{da} \pi(a,y(a)) = - \pars{B(a) + \frac{1}{y(a)}} \pi(a,y(a)) \\
		\pi(0,y(0)) = 2 \int_0^1 \int_0^\infty B(a) \frac{F(\rho)}{\rho} \pi \pars{a, \frac{y(0)-a}{\rho}} da d \rho
		\end{cases}
		\]
		where the associated characteristics are of the form $y(a) = a + (y(0)-a(0))$. Then, the solution $\pi$ of \eqref{eq:Eigenproblem:pi} is given by 
		\[
		\pi(a,y) = \frac{\exp \pars{- \int_0^a B(\alpha) d\alpha } }{y} \eta^* (y-a) ,
		\]
		where the definition of $\eta^*$ is inherited from the initial condition of the ODE:
		\[
		\pi(0,y(0)) = \frac{\eta^* (y-a) }{y-a} =2 \int_0^1 \int_0^\infty B(a) \frac{F(\rho)}{\rho} \pi \pars{a, \frac{y(0)-a}{\rho}} da d \rho.
		\]
		Note that the RHS still depends implicitly on $\pi$. Hence, $\eta^*$ is solution to the fixed point problem
		\begin{align*}
		\eta^*(x) &= 2 \int_0^1 \int_0^\infty B(a) F(\rho) \exp \pars{- \int_0^a B(\alpha) d\alpha }  \eta^* \pars{\frac{x}{\rho} - a}   da d \rho \\
		&=  2 \int_0^1 \int_0^{\frac{x}{\rho}} F(\rho) \psi \pars{\frac{x}{\rho} - a}    \eta^* \pars{a}   da d \rho,
		\end{align*}
		where
		\[
		\psi(a) = B(a)  \exp \pars{- \int_0^a B(\alpha) d\alpha } 
		\]
		is the probability density function of the added size at division.
		The existence of a formal solution to this problem is then a by-product of the existence of $\pi$, here provided by Harris' Theorem. 
		
		Thus finally, the stationary profile of $M_t$ is given by
		\[
		\pi^*(a,y) = \frac{\exp \pars{- \int_0^a B(\alpha) d\alpha } }{y^2} \eta^* (y-a) 
		\]
	\end{enumerate}
	\bigskip
\end{proof}

\begin{figure}[ht]
	\centering
	\includegraphics[width=0.99\textwidth]{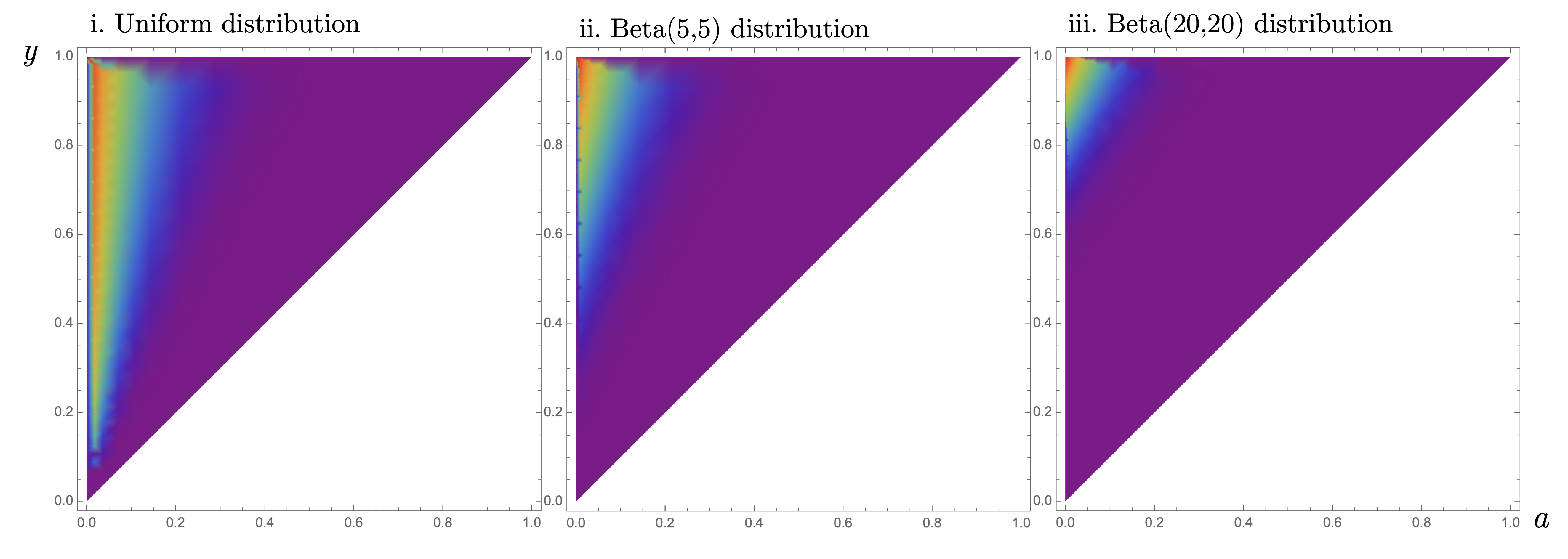}
	\caption{Minorant measure $\nu$ for $F$ given by \textit{i.} the uniform distribution, \textit{ii.} a Beta(5,5) distribution and \textit{iii.} a Beta(20,20) distribution. The values were obtained from numerical approximation. Only $F$ changes between the three cases.}
	\label{fig:nus}
\end{figure}

\begin{remark}
	The stability of this model has already been studied in the early works of~\cite{Hall1991SteadySD} for an application to plant physiology, and more recently by~\cite{gabriel:hal-01742140} where the exponential ergodicity could not be obtained using General Relative Entropy techniques. In our case however, the direct application of Theorem~\ref{thm:harris}, since the eigenelements of $\mathcal{Q}$ are known, allows to prove this result. More generally, when the drift term $g(a,y)$ is not necessarily given by the exponential elongation, the previous section allows to prove the existence of the suitable eigenelements. This was left as an open question by the works of~\cite{gabriel:hal-01742140}.	
\end{remark}

\begin{remark}
	The proof presented above does not work for singular divisions as given, in lieu of a density $F$, by $\rho$ distributed according to $\delta_{1/2}(d\rho)$ as in a perfectly symmetric mitosis. Indeed, the change of variables is no longer possible since $z$ would be constant. Moreover, if we try to pursue the method and average in time, one can check that the obtained $\nu$ would be the trivial measure for some large enough compacts. Such a limitation is not really surprising, since the authors of~\cite{Doumic2017} have already shown that if the elongation rate $\lambda$ is constant for the whole population (as in our case), and the divisions are perfectly symmetrical, then we do not have convergence, and a periodic behaviour is observed.
\end{remark}

\begin{remark} Figure~\ref{fig:nus} shows the shape of $\nu$ for different forms of the density $F$. As $F$ concentrates we can observe the increasing degeneracy of $\nu$. 
\end{remark}

\begin{remark}
	Here, the existence of $\eta^*$ is a by-product of the existence and uniqueness of $\pi$ provided by Harris' Theorem. In contrast, in the works of~\cite{gabriel:hal-01742140}, the existence of the stationary measure depended on the existence of a unique solution to the fixed point problem. Thus, the authors had to show compactness properties of the operator associated to the fixed point problem, which our approach evades. Moreover, our approach allows more general forms for $F$, which~\cite{gabriel:hal-01742140} requires to be of compact support strictly included in $]0,1[$.
\end{remark}

\section{Conclusions}

{

The present article studies spectral and ergodic properties of the first-moment semigroup of an age-size piecewise deterministic measure-valued process. The specificities discussed here are two-fold. 

First, we have that the process is non-conservative, and that the eigenelements are in general unkwnon. This is addressed by following a classical truncation scheme in order to apply Krein-Rutman's theorem. The key is to use the renewal property brought by the age structure that allows to reduce the dimension of the problem. Second, we have some sort of degeneracy arising from the age-coordinate jumps (reset at 0 at each reproduction event), along with a pure advection term between jumps, that makes it non-trivial to show mixing trajectories that explore the two-dimensional unbounded domain $\mathcal{X}$ independently with respect to the initial state. This is addressed by proving a minorisation condition for petite sets. This condition is seemingly weaker than more usual small set conditions, since it allows to average the action of the semigroup with respect to a suitable discrete sampling measure in time, instead of fixing a uniform mixing time. However, as the works of Meyn and Tweedie show, petiteness and smallness are intimately related, and if a discrete-time chain is irreducible and aperiodic, then every petite set is indeed small (Theorem 5.5.7. of \cite{mtbook}). Despite this equivalence, as clearly shown in our setting, petiteness properties are much easier to verify, even in degenerate cases. This appears as one of Meyn and Tweedie's theory main points (see Commentary 5.6 of \cite{mtbook}, for example), and the implications are strong when petiteness can be checked for all compact sets. Similar strategies could turn out to be useful when trajectorial coupling conditions or ``\textit{mass-ratio control}" conditions as the ones discussed by~\cite{CloezGabriel20} prove hard to verify. Finally, it is worth noticing that a similar approach can be followed in a PDE framework by constructing ``\textit{controllabilty sets}", in the sense discussed in P.L. Lions' lectures \cite{pllions}, which play a role equivalent to petite sets in that theory. 

As commented in the Introduction, PDMP evolving at higher dimensions could be of particular interest for sampling complex target distributions in recent MCMC methods, as in the models studied by \cite{bouncy,zigzag,fetique}. Concerning the extension of the minorisation condition to higher dimensions, this does not seems to be much of an issue. The bound relies on the Duhamel representation \eqref{eq:Duhamel} of the semigroup $P_t$ which would be identical in a higher dimensional case. Therefore, if $\gamma_t(s,\mathbf{z}) := \varphi^{t-s}(0,\mathbf{z})$ is still a $C^1$-diffeomorphism on $\mathcal{X} \subset \mathbb{R}^{n+1}_+$, and suitable assumptions are made to bound uniformly from below the integrals, the extension of the presented method should be possible.

Finally, as pointed out by one of the reviewers, it could be also interesting to prove Large Deviations Asymptotics, by employing the theory presented by~\cite{Kontoyiannis2005LargeDA}, which extends the approach presented here to a \textit{multiplicative} ergodic theory. Such estimates are also interesting from biological points of view, where the Large Deviations rate function can be used to obtain variational representations of the Malthusian parameter, as shown for example in~\cite{baake}. This could be the subject of future works.
}

\bmhead{Acknowledgments}
	I thank Sylvie M\'el\'eard for her continual guidance and her many readings and suggestions for this manuscript, and Meriem El Karoui who introduced me to the biological motivations behind this model and has also guided me throughout the work. I would also like to thank Bertrand Cloez and Marie Doumic for the rewarding discussions during this work. Finally, I thank the anonymous reviewers for their interesting and insightful comments.
	
\bmhead{Funding Note} 
This work has been supported by the Chair “Mod\'elisation Math\'ematique et Biodiversit\'e” of Veolia Environnement - \'Ecole polytechnique - Museum National d'Histoire Naturelle - Fondation X. Funded by the European Union (ERC, SINGER, 101054787). Views and opinions expressed are however those of the author only and do not necessarily reflect those of the European Union or the European Research Council. Neither the European Union nor the granting authority can be held responsible for them.



\end{document}